\newtheorem{theorem}{Theorem}[section]
\newtheorem{corollary}[theorem]{Corollary}
\newtheorem{lemma}[theorem]{Lemma}
\theoremstyle{definition}
\newtheorem{definition}[theorem]{Definition}
\newtheorem{remark}{Remark}
\newcommand{\abs}[1]{\left\vert #1 \right\vert}
\newcommand{\norm}[1]{\left\Vert #1 \right\Vert}
\newcommand{\Crd}{\mathcal{C}_{rd}}
\newcommand{\Reg}{\mathcal{R}}
\newcommand{\R}{{\mathbb R}}  
\newcommand{\K}{{\mathbb K}}  
\newcommand{\C}{{\mathbb C}}  
\newcommand{\Z}{{\mathbb Z}}  
\newcommand{\T}{{\mathbb T}}  
\title[Matrix measures, stability and contraction] 
      {Matrix measures, stability and contraction theory for dynamical systems on time scales}
\author[Giovanni Russo and Fabian Wirth]{}
\subjclass{Primary: 34N05, 93C10; Secondary: 93D99.}
 \keywords{Dynamics on time scales, Nonlinear dynamics, Matrix measure, Incremental stability, Contraction Theory, Pinning synchronization}
 \email{giovarusso@unisa.it}
 \email{fabian.(lastname)@uni-passau.de}
\thanks{$^*$ Corresponding author: G. Russo}
\begin{document}
\maketitle

\centerline{\scshape Giovanni Russo$^*$}
\medskip
{\footnotesize
 \centerline{University of Salerno}
   \centerline{Department of Information and Electrical Engineering and Applied Mathematics}
   \centerline{ Salerno, Italy}
} 

\medskip

\centerline{\scshape Fabian Wirth}
\medskip
{\footnotesize
 \centerline{University of Passau}
   \centerline{Faculty of Computer Science and Mathematics }
   \centerline{Passau, Germany}
}

\bigskip


\begin{abstract}
This paper is concerned with the study of the stability of dynamical systems evolving on  time scales. We first {formalize the notion of matrix measures on time scales, prove some of their key properties and make use of this notion to study both linear and nonlinear dynamical systems on time scales.} Specifically, we start with considering linear time-varying systems and, for these, we prove a time scale analogous of an upper bound due to Coppel. We make use of this upper bound to give stability and input-to-state stability conditions for linear time-varying systems. {Then, we consider nonlinear time-varying dynamical systems on time scales and} establish a sufficient condition for the convergence of the solutions. Finally, after linking our results to the existence of a Lyapunov function, we make use of our approach to study certain epidemic dynamics and complex networks. For the former, we give a sufficient condition on the parameters of a SIQR model on time scales ensuring that its solutions converge to the disease-free solution. For the latter, we first give a sufficient condition for pinning controllability of complex time scale networks and then use this condition to study certain collective opinion dynamics. The theoretical results are complemented with simulations. 
\end{abstract}

\section{Introduction}

Over the years, the study of dynamical systems evolving on an arbitrary time scale, say $\T$, has attracted much research attention, see e.g. the monographs \cite{Boh_03,Mar_16,Liu_Zha_19}. A key reason for this interest is that these {\em time scale} dynamics offer a powerful tool to both unify several theoretical results of continuous-time and discrete-time dynamics \cite{hilger1990analysis} and, at the same time, to study systems evolving on non-uniform time domains. Networks where the interaction among nodes happens intermittently \cite{CHENG2015729}, social dynamics \cite{OGULENKO2017413}, neural networks \cite[Chapter $5$]{Mar_16} are just a few examples of systems of practical interest that can be modeled via dynamical equations on time scales. Moreover, when $\T\equiv\R$, the time scale dynamics reduces to a differential equation and when $\T\equiv h\mathbb{Z}$ it becomes a difference equation.

Dynamical systems on time scales have been introduced by Hilger in
\cite{hilger1990analysis}. Since then, a substantial amount of research has
been performed on the stability of these systems. For linear systems, results include a spectral characterization of the exponential stability of
linear time-invariant systems \cite{potzsche2003spectral}, the development of Lyapunov techniques for time-varying systems \cite{DACUNHA2005381}, a criterion for exponential stability for one dimensional systems~\cite{gard2003asymptotic} and a generalization of Riccati equations \cite{POULSEN201714861,BABENKO2018316}. For nonlinear dynamics on time scales, works on stability include \cite{doi:10.1080/10236190902932734,Boh_03,Mar_16,Liu_Zha_19}, which make use of Lyapunov functions and/or comparison theorems. Also, for these nonlinear dynamics a generalization of Pontryagin Maximum Principle can be found in \cite{doi:10.1137/130912219}. In this paper, we introduce novel sufficient conditions for the stability of linear and nonlinear time-varying dynamical systems evolving on time scales\footnote{An early version of some of the results introduced here for linear systems only were presented at the $58^{\text{th}}$ IEEE Conference on Decision and Control \cite{9029839}.}. These conditions are based on a generalization of matrix measures that explicitly embeds the time scale over which the system evolves \cite{9029839}. Matrix measures, also known as initial growth rates of the
matrix \cite{Hin_Pri_05} or logarithmic norms \cite{lozinskii,dahlquist}, have become a popular tool to study both linear and nonlinear ordinary differential equations (ODEs). For example, Coppel's inequality makes use of this notion to give uniform and non-uniform exponential stability bounds for linear time-varying systems \cite{Vid_93}. Also, using the matrix measure of the generators, exponential bounds for the evolution operators of nonlinear systems can be established. This technique has found widespread application in particular in the linearization theory of nonlinear systems, see e.g. \cite{russo2010global,Loh_Slo_98,7814289,9403888} and references therein. 

In this context, the main contributions of this paper can be summarized as follows. After introducing the notion of matrix measures on time scales, we characterize and prove a number of their key properties. We also give closed algebraic expressions for certain matrix measures on time scales of practical interest. The properties and the algebraic expressions illustrate how these generalized matrix measures embed the classic notion of matrix measure as a special case and, at the same time, explicitly depend on the  underlying time scale. {Matrix measures on time scales are then used to study both linear and nonlinear time-varying dynamical systems.} Namely, we first prove a generalized version of an upper bound due to Coppel. This is done by first introducing the notion of initial growth rate for a linear time-varying system evolving on an arbitrary time scale and then by relating this notion to the matrix measure. The upper bound that we devise is  used to give sufficient conditions for stability and input-to-state stability of linear time-varying systems. Then, we consider nonlinear time scale dynamics and prove an upper bound between the distance of any two of its solutions, thus also giving a condition ensuring that the solutions converge towards each other. \textcolor{black}{That is, with our results, we also extend the popular contraction theory framework for differential equations to nonlinear dynamics on time scales.} Moreover, we  explicitly link our results to the existence of a Lyapunov function for the  dynamics. Finally, we illustrate the effectiveness of our theoretical tools by showing how these tools can be used to study both a time scale version of an epidemic dynamics \cite{CAO2019121180} and  synchronizability of complex networks \cite{PORFIRI20083100} evolving on arbitrary time scales. In particular, we: (i) give, for the epidemic model, a sufficient condition on the model parameters ensuring that the solutions of the epidemic dynamics converge to the disease-free solution; (ii) devise a sufficient condition for pinning synchronizability and use such a condition to study collective opinion dynamics with stubborn agents. The theoretical results are complemented via simulations\footnote{The code to replicate our numerical studies is available at \burl{https://github.com/GIOVRUSSO/Control-Group-Code}}.

The paper is organized as follows. After recalling some definitions and results of time scale calculus in Section \ref{Sec:2}, we define, in Section \ref{sec:matrix_measures}, the notion of matrix measure on time scales and prove a number of their key properties. In Section \ref{sec:coppel} we prove a time scale analogous of an upper bound due to Coppel and, in Section \ref{sec:stability}, we build on this result to study stability and input-to-state stability of linear time-varying systems on time scales. After giving two simple examples (Section \ref{sec:examples}) illustrating some of the key features of the results for linear systems, we turn our attention to study nonlinear time-varying dynamics on time scales, giving (Section \ref{sec:nonlinear}) a convergence result for these systems. This result, which is based on the use of matrix measures, is then leveraged to study epidemic dynamics (Section \ref{sec:epidemic_model}) and predict when the dynamics converges to the disease-free solution. Moreover, we link the result of Section \ref{sec:nonlinear} to the existence of a Lyapunov function for the system (this is done Section \ref{sec:Lyapunov}). Finally, we turn our attention to study pinning synchronization of complex networks (Section \ref{sec:pinning}). In particular, after giving a sufficient condition for pinning synchronizability, we consider (Section \ref{sec:opinion}) collective opinion dynamics and show how our results can predict convergence of the opinion of all the nodes in the network onto the opinion of a {\em stubborn} agent. Concluding remarks are given in Section \ref{sec:conclusions}.

\section{Mathematical Preliminaries}\label{Sec:2}
 We introduce the notation used throughout the paper and recall some useful definitions and results of calculus on time scales.
\subsection{Notation}
\textcolor{black}{We let $\K = \R,\C$ be the
real or complex field}. For a matrix $A \in \K^{n \times n}$, we denote the
spectrum of $A$ by $\sigma(A)$. For Hermitian matrices we denote the
maximal eigenvalue by $\lambda_{\max}$; the maximal (minimal) singular value of a
matrix $A\in \K^{n \times n}$ is  $\sigma_{\max}(A)$ ($\sigma_{\min}(A)$). Given a norm $\abs{\cdot}$ on $\K^n$, the
induced matrix norm on $\K^{n \times n}$ is $\norm {\cdot}$. Let $\T$ be a time scale, i.e., a non-empty, closed subset of
$\R$. We denote by (see e.g. \cite{Boh_03} and references therein): (i)
$\sigma:\T\rightarrow\T$, $\sigma(t) := \inf\left\{s\in\T : s>t\right\}$
the {\em forward} operator; (ii) $\rho:\T\rightarrow\T$, $\rho(t) :=
\sup\left\{s\in\T:s<t\right\}$ the {\em backward} operator; (iii) $\mu : \T
\rightarrow \R_+$, $\mu(t) := \sigma(t) - t$ the graininess function. Also, $t\in\T$ is: (i) right-scattered, if $t < \sigma(t)$; (ii)
right-dense, if $t = \sigma(t)$; (iii) left-scattered, if $t>\rho(t)$;
(iv) left-dense, if $\rho(t) = t$. Let $f: \T\rightarrow\K^{\textcolor{black}{n}}$. Then, $f^\Delta(t)$ denotes the {\em delta derivative} of $f$ at time $t$ (see e.g. \cite{Liu_Zha_19,Mar_16} for a rigorous introduction to delta derivatives).
Whenever we consider concepts of stability, we tacitly assume that
$\T$ is unbounded to the right.
\textcolor{black}{Finally, we will set for $t_0 \in \T, \varepsilon>0$
  \begin{equation}
\label{eq:Tt0def}
      \T_{t_0} := \T \cap [ t_0, \infty), \quad  \T_{(t_0,t_0+\varepsilon ]}
      := \T \cap (t_0, t_0 +\varepsilon ].
  \end{equation}
} 

\subsection{Calculus on time scales}\label{sec:calculus} 
We now recall some basic definitions and results which can be found in
e.g. \cite{Boh_03,Mar_16}. \textcolor{black}{A function $f:\T\rightarrow\K^{n\times m}$ is $rd$-continuous
if all of its component functions are continuous at right-dense points and the left-side limits exist
at left-dense points in $\T$. \textcolor{black}{For a topological space $X$ a map $f:\T\times X\rightarrow\K^{n\times n}$ is called rd-continuous if for any continuous $x:\T \to X$ the map $t\mapsto f(t,x(t))$ is rd-continuous, see \cite[Definition~8.14]{Boh_03}.} The set of $rd$-continuous functions
$f:\T\rightarrow\K^{n\times m}$ is denoted by \textcolor{black}{$\Crd(\T,\K^{n\times m})$ - sometimes we will use the shorthand notation $f\in\Crd$ to state that $f$ belongs to $\Crd$.} When $n = m$, we also say that  $f:\T\rightarrow\K^{n\times n}$ is regressive if $1 +
\mu(t)f(t)$ is invertible for all $ t \in \T$.} The set of $rd$-continuous
and regressive functions is denoted by $\Reg(\T, \K^{n \times n})$. Analogously, we denote by
$\Reg^+(\T,\R)$ the set of {positively regressive  functions, i.e. the set of functions $f:\T\to\R$ for which {$1+\mu(t)f(t) $ is positive}  for all
$t\in\T$.} \textcolor{black}{If dimensions and time scale are clear from the context, we will
simply write $\Reg,\Reg^+$.} 

For a function \textcolor{black}{
$\lambda\in\Reg(\T,\K)$}, the  \emph{Hilger real part} is defined by
\begin{displaymath}
    \hat{\Re}\left\{\lambda\right\}(t):=\lim_{s\searrow\mu(t)}
    \frac{\abs{1+s\lambda(t)}-1}{s}, \quad t\in\T.
\end{displaymath}
\textcolor{black}{We will write $ \hat{\Re}\left\{\lambda\right\}(\mu)$, if
  we want to stress the dependence on a particular value of the graininess $\mu$.} 
Note that 
$\hat{\Re}\{\lambda\}\in\Reg^+$. As usual $\Re\{\lambda\}$ denotes the real
part of a complex number $\lambda$ and this notation is consistent
with the notion of the Hilger real part for $\mu=0$.

We are now ready to give
the following:
\begin{definition}\label{def:exponential}
Let $p \in\Reg(\T,\K)$. Then, the exponential function on the time scale $\T$ is
defined by $e_p(t,s) :=
\exp\left(\int_s^t\xi_{\mu(\tau)}(p(\tau))\Delta\tau\right)$, $
t,s\in\T$, where 
\begin{equation*}
    \xi_h(z) := \left\{\begin{array}{*{20}l}
\frac{\mathrm{Log}(1+hz)}{h}, & h \ne 0,\\
z, & h = 0,
\end{array}\right.
\end{equation*}
 is the cylinder transformation. \textcolor{black}{Here $\mathrm{Log}$
   denotes the principal logarithm. If $p:\T\to\K$ is rd-continuous and not regressive, then we apply the same definition as above, whenever $s\leq t$ is such that $p$ is regressive on $\T\cap [s,t]$. If $p$ is not regressive on $\T\cap [s,t]$, then $e_p(t,s):=0$ and $e_p(s,t)$ is undefined.}
\end{definition}
The integration in Definition~\ref{def:exponential} is understood as the
integration in the sense of time scale calculus
\cite{Boh_03,guseinov2003integration}. 

\textcolor{black}{Let $A:\T\to \K^{n \times n}$ be
  rd-continuous and} consider the linear time-varying system
\begin{equation}
    \label{eqn:basis_sys_vector}
    x^\Delta (t) = A(t) x(t), \quad t\in\T.
\end{equation}
For each initial value condition $x(t_0) = x_0\in\K^n$ there exists a unique solution $x(\cdot;t_0,x_0):\T_{t_0}\to\K^n$. \textcolor{black}{If $A(\cdot)$ is regressive, the solution may be uniquely extended to $\T$. As we are mainly interested in forward solutions this aspect will not play a significant role in this paper.}

Associated to \eqref{eqn:basis_sys_vector} we consider the matrix initial value problem
\begin{equation}\label{eqn:basic_sys}
Y^{\Delta}(t) = A(t) Y(t), \ \ \ Y(t_0) = I, \ \ \ t_0\in\T.
\end{equation}
\textcolor{black}{This initial value problem uniquely defines the transition operator
$\Phi_A(t,t_0)$ on the set $\{ (t,t_0) \in \T^2 \;:\; t\geq t_0 \}$.
If $A\in\Reg$, 
then $\Phi_A(t,t_0)$ is always invertible and the definition may be
extended to 
$\T^2$, see e.g. \cite{Mar_16}.} \textcolor{black}{This}  leads to the following definition of
the matrix exponential.
\begin{definition}\label{def:matrix_exponential}
\textcolor{black}{Let $A\in \K^{n\times n}$. The unique solution of (\ref{eqn:basic_sys}) for $A(t) \equiv A$ is called the matrix
exponential function $e_A(\cdot,t_0):\T_{t_0}\to \K^{n \times n}$.
} 
\end{definition}
\textcolor{black}{\begin{remark}
(i) We believe that it is an unfortunate choice in much of the time-scale literature to call the transition operator associated to \eqref{eqn:basic_sys} the matrix exponential whenever $A(\cdot) \in \Reg(\T, \K^{n \times n})$. In the theory of ordinary differential equations considerable care is usually taken to distinguish between the matrix exponential and transition operators of time-varying linear systems, which have quite distinct properties. Here we follow \cite{DACUNHA2005381} and reserve the usage of the term matrix exponential function for the case of constant matrix functions. This appears to be more consistent as it is indeed shown in \cite{dacunha2005transition} that in the time invariant regressive case there is indeed an expression for $\Phi_A$ analytic in $A$, which is close to the classic matrix exponential; (ii) Much of the literature makes assumptions on regressivity, but this is only important for unique backward solutions. Since this is of no relevance for this paper we will omit the assumption in most cases.
\end{remark}}
It can be shown (see e.g. \cite{Mar_16}) that, if $A\in\Reg$, then the
matrix exponential satisfies the following properties for all $t_0,t\in\T$: (i) $e_A(t,t) =I$; (ii) $e_A(t+\mu(t),t_0)  = e_A(\sigma(t),t_0) = (I+\mu(t)A(t))e_A(t,t_0)$.
Finally, we also recall the following important lemma that can be  found in \cite{Mar_16}. It has been noted in \cite{barto2006realizations} that the assumption of regressivity is not necessary.
\begin{lemma}\label{lem:variation_of_constants}
    \textcolor{black}{Let $A:\T\rightarrow \K^{n\times n}$ and $f:\T \rightarrow
    \K^n$ be rd-continuous. 
    Then for every $t_0\in\T$ the unique solution of the
    initial value problem $x^{\Delta}(t) = A(t) x(t) + f(t)$, $x(t_0) =x_0$
exists on $\T_{t_0}$.} Moreover, the solution is given by
\begin{equation}\label{eqn:variation_of_constant}
x(t) = \Phi_A(t,t_0)x_0 + \int_{t_0}^t \Phi_A(t,\sigma(\tau))f(\tau)\Delta\tau, \quad t\in \T_{t_0}.
\end{equation}
\end{lemma}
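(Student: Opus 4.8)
The plan is to \emph{verify} that the right-hand side of \eqref{eqn:variation_of_constant} solves the initial value problem, and then to deduce uniqueness from the already recorded uniqueness of solutions of the homogeneous equation \eqref{eqn:basis_sys_vector}. Write $y(t)$ for the expression on the right-hand side of \eqref{eqn:variation_of_constant}. First I would check that $y$ is well defined on $\T_{t_0}$: for $t_0\le\tau<t$ one has $\sigma(\tau)\le t$, so $\Phi_A(t,\sigma(\tau))$ is defined through the forward problem \eqref{eqn:basic_sys} (no regressivity is needed for the forward transition operator), and $\tau\mapsto\Phi_A(t,\sigma(\tau))f(\tau)$ is rd-continuous in $\tau$ — this uses the rd-continuity of $f$ together with joint continuity of $(t,s)\mapsto\Phi_A(t,s)$ on $\{t\ge s\}$, which itself follows from the integral equation $\Phi_A(t,s)=I+\int_s^t A(\tau)\Phi_A(\tau,s)\,\Delta\tau$ and continuous dependence on initial data. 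Hence the $\Delta$-integral in \eqref{eqn:variation_of_constant} exists.

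Next I would differentiate $y$. The first summand $\Phi_A(\cdot,t_0)x_0$ is, by definition of the transition operator, delta-differentiable with derivative $A(t)\Phi_A(t,t_0)x_0$. For the integral summand I would invoke the time-scale differentiation-under-the-integral (Leibniz) rule: with $g(t,\tau):=\Phi_A(t,\sigma(\tau))f(\tau)$, which is delta-differentiable in its first variable with partial derivative $A(t)\Phi_A(t,\sigma(\tau))f(\tau)$ (again because $t\mapsto\Phi_A(t,\sigma(\tau))$ solves \eqref{eqn:basic_sys} with initial time $\sigma(\tau)$), one gets
\begin{displaymath}
  \left(\int_{t_0}^t g(t,\tau)\,\Delta\tau\right)^{\!\Delta} = g(\sigma(t),t) + \int_{t_0}^t A(t)\Phi_A(t,\sigma(\tau))f(\tau)\,\Delta\tau .
\end{displaymath}
Since $g(\sigma(t),t)=\Phi_A(\sigma(t),\sigma(t))f(t)=f(t)$, adding the two contributions gives $y^\Delta(t)=A(t)\Phi_A(t,t_0)x_0+f(t)+A(t)\int_{t_0}^t\Phi_A(t,\sigma(\tau))f(\tau)\,\Delta\tau=A(t)y(t)+f(t)$, while trivially $y(t_0)=\Phi_A(t_0,t_0)x_0=x_0$. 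This exhibits a solution on $\T_{t_0}$ and identifies it with \eqref{eqn:variation_of_constant}.

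For uniqueness I would observe that the difference $z:=x-\tilde x$ of two solutions of the inhomogeneous initial value problem satisfies $z^\Delta(t)=A(t)z(t)$, $z(t_0)=0$, so $z\equiv 0$ by uniqueness of solutions of \eqref{eqn:basis_sys_vector} (equivalently $z(t)=\Phi_A(t,t_0)\cdot 0=0$), which completes the proof.

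The step I expect to be the main obstacle is the rigorous application of the Leibniz rule in the non-regressive setting. The version in \cite{Boh_03} is typically phrased for integrands $g$ with $g$ and its partial delta derivative \emph{continuous}, whereas here $A$ — and therefore the partial derivative of $g$ — is only rd-continuous and not assumed regressive; one therefore has to establish enough joint regularity of $(t,s)\mapsto\Phi_A(t,s)$ on $\{t\ge s\}$ and invoke the rd-continuous form of the rule. This is exactly the refinement flagged in \cite{barto2006realizations}, and it is where essentially all the technical care is concentrated; once it is in place, everything else is a direct computation from the two defining properties $\Phi_A(s,s)=I$ and $(\Phi_A(\cdot,s))^\Delta=A(\cdot)\Phi_A(\cdot,s)$.
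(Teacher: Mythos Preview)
The paper does not actually prove this lemma: it is stated as a recalled result from \cite{Mar_16}, with the remark that \cite{barto2006realizations} shows regressivity is unnecessary. Your direct-verification argument is the standard one and is correct in outline; you have also correctly isolated the only genuine technical point, namely justifying the Leibniz rule for $\int_{t_0}^t \Phi_A(t,\sigma(\tau))f(\tau)\,\Delta\tau$ when $A$ is merely rd-continuous and not assumed regressive, which is precisely what \cite{barto2006realizations} supplies.
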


\section{Matrix measures on time scales}\label{sec:matrix_measures}
We now formalize the concept of matrix measure on the time scale $\T$, which we shall see generalizes the classical notion of matrix measure (also known as logarithmic norm, independently introduced by Dahlquist and Lozinskii \cite{dahlquist,lozinskii}, see also \cite{Des_Vid_08,Vid_93,coppel2006dichotomies}). 
To this aim, let: (i) $\abs{\cdot}$ be a vector norm on $\K^n$ with $\norm{\cdot}$ being its induced matrix norm; (ii) $A:\T\rightarrow \K^{n\times n}$ be a matrix having bounded elements, i.e. there exists some constant $\bar a$ such that $a_{ij}(t) \le \bar a$, $\forall t\in\T$. 
\begin{definition}\label{def:matrix_measure}
    Let $\T$ be a time scale and let $A \in \K^{n\times n}$. Let
    $\abs{\cdot}$ be a norm on $\K^n$ with induced norm
    $\norm{\cdot}$. The matrix measure of $A$ induced by $\abs{\cdot}$ on
    the time scale $\T$ is defined for $t\in \T$ as:
\begin{equation}\label{eqn:matrix_measure}
 m(A,\mu(t)) =  m(A,t) = 
\left\{\begin{array}{*{20}c}
\frac{1}{\mu(t)}\left(\norm{I + \mu(t)A} - 1\right), & \text{ if }  \mu(t) \ne 0,\\
\lim_{h\searrow 0} \frac{1}{h}\left(\norm{I + hA} - 1\right), & \text{ if } \mu(t) = 0.
\end{array}\right.
\end{equation}
\end{definition}
\textcolor{black}{The existence of the limit for the case $\mu=0$ is
  well-known for arbitrary norms on $\K^{n
    \times n}$ and arbitrary $A \in \K^{n \times n}$, see
  e.g. \cite[Proposition~5.5.8]{Hin_Pri_05}.} 
We see that the construction of the matrix measure is a matrix analogue of
the Hilger real part and, for scalars, the two notions coincide. We note that while the standard matrix measure of a constant matrix $A$ is a constant in the classic continuous time case, here the definition is inherently time-varying as the variation of the time scale needs to be taken into account. Sometimes, whenever it is clear from the context, we omit the explicit dependence of the matrix measure on $\mu(t)$ and we use the notation $m(\cdot,t)$. The notation $m(\cdot,\mu(t))$ is instead used when we want to stress the effects of the time scale on our results.  

\subsection{Properties}
Clearly, when $\mu=0$, then $m(A,\mu)$ is equal to the well-known matrix
measure, see e.g. \cite{Vid_93,Hin_Pri_05}. We now characterize certain properties of
(\ref{eqn:matrix_measure}) when $\mu(t) \ne 0$ (the proofs for $\mu(t)=0$
are omitted as these can be found in e.g. \cite{Vid_93,Hin_Pri_05,Des_Vid_08}). A
first set of properties, used to prove our stability results, is captured
in the following technical lemma.
\begin{lemma}\label{lem:properties}
Let 
$\T$ be a time scale, 
and $\abs{\cdot}$ be a vector norm inducing the
matrix norm $\norm{\cdot}$. \textcolor{black}{The associated matrix measure
$m(\cdot,\cdot):\K^{n \times n} \times [0,\infty)\to\R$ has the following properties
for every graininess value $\mu\geq0$ and all $A,B\in \K^{n \times n}$.} 
\begin{enumerate}[(i)] 
\item $m(I,\mu) = 1$ and
\begin{equation*}
m(-I,\mu)  =
 \left\{\begin{array}{*{20}c}
-1, & \text{ if }  1 - \mu \geq 0,\\
\frac{\mu-2}{\mu}, & \text{ if }  1 - \mu \leq 0;
\end{array}\right.
\end{equation*}
\item \textcolor{black}{$m(0,\mu) = 0$};
\item $-\norm{A} \le m(A,\mu) \le \norm{A}$;
\item $m(\cdot,\mu)$ is convex in the first argument, i.e. for all $\alpha
  \in [0,1]$ we have
  \begin{equation*}
      m( \alpha A+( 1- \alpha)B,\mu) \le \alpha m(A,\mu) + ( 1- \alpha)m(B,\mu);
  \end{equation*}
\item \textcolor{black}{the function
    $m(A,\cdot)$ is decreasing in the second argument;} 
\item \textcolor{black}{the map $m(\cdot,\cdot):\K^{n \times n} \times [0,\infty)\to\R$ is continuous.}
\item if $\lambda \in \sigma(A)$, then  $\textcolor{black}{\hat\Re\{\lambda\}(\mu)} \le m(A,\mu)$;
\item \textcolor{black}{$-m(A,\mu)\abs{x}\le \abs{Ax}$ and $-m(-A,\mu)\abs{x}\le \abs{Ax}$;}
\item Let $P\in\K^{n\times n}$ be a nonsingular matrix and
  $m_P(\cdot,\cdot)$ be the matrix measure induced by the vector norm
  $\abs{x}_P:=\abs{Px}$. Then, $m_P(A,\mu)=m(PAP^{-1},\mu)$.
\end{enumerate}
\end{lemma}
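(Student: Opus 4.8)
The plan is to reduce essentially everything to the elementary identity
\[ 1 + \mu\, m(A,\mu) = \norm{I + \mu A}, \quad \mu > 0, \]
which holds by Definition~\ref{def:matrix_measure}; the $\mu = 0$ instances of all nine assertions are the classical ones recorded in the references cited above, so it suffices to treat $\mu > 0$ and then reconnect the two global statements~(v),~(vi) with the boundary $\mu = 0$. Granting the displayed identity, items~(i)--(iv) and~(ix) are immediate: (i) and~(ii) use $\norm{(1\pm\mu)I} = \abs{1\pm\mu}$ and $\norm{I} = 1$; (iii) is the triangle inequality together with its reverse form $1 - \mu\norm{A} \le \norm{I+\mu A} \le 1 + \mu\norm{A}$; (iv) follows from $I + \mu(\alpha A + (1-\alpha)B) = \alpha(I+\mu A) + (1-\alpha)(I+\mu B)$ and sublinearity of $\norm{\cdot}$ after writing $1 = \alpha\cdot 1 + (1-\alpha)\cdot 1$; and~(ix) uses that the matrix norm induced by $\abs{\cdot}_P$ is $\norm{M}_P = \norm{PMP^{-1}}$, so that $\norm{I+\mu A}_P = \norm{I + \mu PAP^{-1}}$.

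For~(vii), if $Av = \lambda v$ with $v \ne 0$ then $(I+\mu A)v = (1+\mu\lambda)v$, so $\norm{I+\mu A} \ge \abs{1+\mu\lambda}$, and dividing $\norm{I+\mu A} - 1 \ge \abs{1+\mu\lambda} - 1$ by $\mu$ gives $m(A,\mu) \ge \hat\Re\{\lambda\}(\mu)$. For~(viii) I would use the decomposition $x = (I\pm\mu A)x \mp \mu Ax$ together with the reverse triangle inequality: $\norm{I\pm\mu A}\,\abs{x} \ge \abs{(I\pm\mu A)x} \ge \abs{x} - \mu\abs{Ax}$, which, after inserting $\norm{I+\mu A} = 1+\mu m(A,\mu)$ and $\norm{I-\mu A} = 1+\mu m(-A,\mu)$ and dividing by $\mu$, rearranges to $-m(\pm A,\mu)\abs{x} \le \abs{Ax}$.

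The two assertions needing more than the displayed identity are~(v) and~(vi). For~(v), the map $s \mapsto \norm{I+sA}$ is convex on $[0,\infty)$, being a norm composed with an affine map; since $m(A,\mu)$ is the slope of the chord of this convex function from $0$ to $\mu$, its monotonicity in $\mu$ follows from the standard secant-slope property of convex functions — equivalently one argues directly from $I+\mu_1 A = \tfrac{\mu_1}{\mu_2}(I+\mu_2 A) + \bigl(1-\tfrac{\mu_1}{\mu_2}\bigr)I$ for $0 < \mu_1 \le \mu_2$ and sublinearity of $\norm{\cdot}$ — and the boundary case $\mu_1 = 0$ follows by letting $\mu_1 \searrow 0$ using $m(A,0) = \lim_{\mu\searrow 0} m(A,\mu)$. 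For~(vi), the crucial estimate, uniform in $\mu$, is that $m(\cdot,\mu)$ is $1$-Lipschitz in its matrix argument: $\bigl|\,\norm{I+\mu A} - \norm{I+\mu B}\,\bigr| \le \mu\norm{A-B}$ gives $\abs{m(A,\mu)-m(B,\mu)} \le \norm{A-B}$ for $\mu > 0$, hence also at $\mu = 0$ in the limit; together with the (obvious) continuity of $\mu\mapsto m(A,\mu)$ on $(0,\infty)$ and its right-continuity at $0$ by definition, the splitting $\abs{m(A,\mu)-m(A_0,\mu_0)} \le \norm{A-A_0} + \abs{m(A_0,\mu)-m(A_0,\mu_0)}$ gives joint continuity on $\K^{n\times n}\times[0,\infty)$. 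I expect the only genuine obstacle to be this gluing at $\mu = 0$: one must verify that the explicit $\mu > 0$ formula and the limiting definition at $\mu = 0$ are compatible, and the uniform Lipschitz estimate in the matrix argument — together with the limit identity $m(A,0) = \lim_{\mu\searrow 0} m(A,\mu)$ — is exactly what lets~(vi), and the boundary case of~(v), close without a separate argument.
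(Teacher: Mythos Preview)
Your approach is correct and for items (i)--(iv), (vii)--(ix) it coincides with the paper's: both reduce to the identity $1+\mu\,m(A,\mu)=\norm{I+\mu A}$ and use the same triangle-inequality manipulations. For (v) the paper simply cites \cite[Proposition~5.5.8]{Hin_Pri_05}; your secant-slope argument is exactly the content of that reference, so nothing is different there. (A side remark: the secant-slope property of the convex map $s\mapsto\norm{I+sA}$ shows that $\mu\mapsto m(A,\mu)$ is \emph{non-decreasing}; the word ``decreasing'' in the statement of (v) is a slip, but this is immaterial for the paper's only use of (v), which requires merely monotonicity.)

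The one genuine divergence is (vi). The paper proceeds as follows: continuity on $\K^{n\times n}\times(0,\infty)$ is clear from the formula; continuity of $m(\cdot,0)$ on $\K^{n\times n}$ follows from convexity (iv); then, since $m(A,\mu)\to m(A,0)$ pointwise and monotonically as $\mu\searrow 0$ with a continuous limit, Dini's theorem yields locally uniform convergence, from which joint continuity at the boundary $\mu=0$ is deduced. You instead exploit the uniform $1$-Lipschitz bound $\abs{m(A,\mu)-m(B,\mu)}\le\norm{A-B}$, valid for every $\mu\ge 0$, and combine it with pointwise continuity in $\mu$ via the splitting $\abs{m(A,\mu)-m(A_0,\mu_0)}\le\norm{A-A_0}+\abs{m(A_0,\mu)-m(A_0,\mu_0)}$. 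Your route is more elementary---it avoids both Dini's theorem and the continuity-of-convex-functions step---and it gives a quantitative modulus in the first argument; the paper's route, on the other hand, makes the monotone-limit structure explicit. Both arguments are complete.
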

\begin{proof} Items (i) and (ii) are straightforward. Item (iii) directly
    follows from the fact that $1 - \mu\norm{A} \le  \norm{I+\mu A} \le 1 +
    \mu \norm{A}$. In order to prove item (iv), we note that, by definition:
\begin{equation*}
    \begin{split}
m(\alpha A+ (1-\alpha) B,\mu) & =  \frac{1}{\mu}\left(\norm{I + \mu(\alpha A+(1-\alpha)B)} - 1\right) \\
&\le \frac{1}{\mu}\left(\norm{ \alpha I + \mu \alpha A} \! +\!
  \norm{(1\! -\! \alpha)I \! +\!
    \mu(1\! -\!\alpha)B} -1 \right)  \\
&= \alpha m(A,\mu) + ( 1- \alpha)m(B,\mu),
    \end{split}
\end{equation*}
thus yielding the result. \textcolor{black}{Item (v) is well-known, see e.g
  \cite[Proposition 5.5.8]{Hin_Pri_05}.  For (vi), first note that
  continuity of $m$ on $\K^{n \times n}\times(0,\infty)$ is obvious from
  the definition; on $\K^{n \times n}\times \{ 0 \}$ continuity follows
  from (iv), as globally defined convex functions are continuous. Finally,
  we have from the definition that $\lim_{\mu\searrow 0} m(A,\mu) =
  m(A,0)$ pointwise in $A$. As the convergence is monotone by (v) and as
  the limit function $m(\cdot,0)$ is continuous, Dini's theorem,
  \cite[Theorem~7.13]{rudin1976principles}, implies that the convergence
  is uniform on compact subsets of $\K^{n \times n}$. It is then easy to
  see that
  $m$ is continuous on $\K^{n \times n}\times [0,\infty)$.}  To show item (vii) pick an eigenvalue $\lambda$ of
$A$. Then $1+\mu\lambda$ is an eigenvalue of $I + \mu A$ and for any
sub-multiplicative norm it is well-known that eigenvalues are bounded in
absolute value by the norm of the matrix and thus $\norm{I + \mu A} \geq
\abs{1+\mu \lambda}$ and the result then follows.  \textcolor{black}{In
  order to show item (viii) pick any 
$\mu>0$. We have:
\begin{equation}
\begin{split}
    \abs{Ax} = \frac{\abs{x-(x-\mu Ax)}}{\mu } \ge
    \frac{\abs{x}-\norm{I-\mu A}\abs{x}}{\mu} = -m(-A,\mu)\abs{x}.
\end{split}    
\end{equation}
This proves that $\abs{Ax}\ge -m(-A,\mu)\abs{x}$. In order to prove that $\abs{Ax}\ge -m(A,\mu)\abs{x}$ one can use the same derivations as above, this time considering $\abs{-Ax}$. Finally, the proof for (ix) follows directly from  \cite[Chapter $2$]{Des_Vid_08} and hence it is omitted here for the sake of brevity.}
\end{proof}

With the next lemma we highlight how certain properties of matrix measures when
$\mu(t) \neq 0$ {\em translate} to different time scales. This is of interest
for instance, when comparing discretized systems with different sampling
steps, such that $\T = h \Z$ for some $h>0$. The same property is also used to give a sufficient condition for network synchronizability in Section \ref{sec:pinning}.
\begin{lemma}\label{lem:properties_mu}
Consider the same set-up as Lemma \ref{lem:properties}. Then:
\begin{enumerate}[(i)]
\item $m(A+B,\mu) \le m(A,2\mu) + m(B,2\mu)$, for all $A,B\in \K^{n \times
    n}$, $\mu\geq0$;
\item $m(cA,\mu) = cm(A,c\mu)$, for all $A\in \K^{n \times n}$, $c,\mu\geq0$.
\end{enumerate}
\end{lemma}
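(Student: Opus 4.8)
The plan is to establish (ii) first, as it is essentially a change-of-variables identity, and then to obtain (i) as an immediate consequence of (ii) together with the convexity property Lemma~\ref{lem:properties}(iv).

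For (ii), I would split into the generic case and two degenerate cases. When $\mu>0$ and $c>0$, the definition gives directly
\[
  m(cA,\mu) = \frac{1}{\mu}\bigl(\norm{I + \mu c A} - 1\bigr)
  = \frac{c}{c\mu}\bigl(\norm{I + (c\mu) A} - 1\bigr) = c\, m(A,c\mu).
\]
If $c=0$, both sides vanish by Lemma~\ref{lem:properties}(ii). If $\mu=0$ and $c>0$, substitute $h'=ch$ in the defining limit to get $m(cA,0)=\lim_{h\searrow 0}\tfrac1h(\norm{I+hcA}-1)=\lim_{h'\searrow 0}\tfrac{c}{h'}(\norm{I+h'A}-1)=c\,m(A,0)$; for $\mu=0$, $c=0$ the identity is trivial (one may also invoke continuity, Lemma~\ref{lem:properties}(vi)). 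This covers all cases.

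For (i), I would write $A+B=\tfrac12(2A)+\tfrac12(2B)$ and apply convexity in the first argument (Lemma~\ref{lem:properties}(iv)) with $\alpha=\tfrac12$, obtaining
\[
  m(A+B,\mu) \le \tfrac12 m(2A,\mu) + \tfrac12 m(2B,\mu),
\]
and then apply part (ii) with $c=2$ to each summand, i.e. $m(2A,\mu)=2\,m(A,2\mu)$ and $m(2B,\mu)=2\,m(B,2\mu)$, which yields $m(A+B,\mu)\le m(A,2\mu)+m(B,2\mu)$. Alternatively, for $\mu>0$ one argues directly from $I+\mu(A+B)=\tfrac12(I+2\mu A)+\tfrac12(I+2\mu B)$, taking norms, subtracting $1$, and dividing by $\mu$, exactly as in the proof of Lemma~\ref{lem:properties}(iv).

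There is no genuine obstacle here: both claims reduce at once to facts already proved. The only points that need a moment's care are the degenerate values $c=0$ and $\mu=0$ in (ii), which are dispatched by Lemma~\ref{lem:properties}(ii) and the limit definition (equivalently, continuity), respectively.
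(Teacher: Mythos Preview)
Your proof is correct and essentially follows the paper's approach: the paper obtains (ii) by the same direct computation (handling the degenerate cases $c=0$ and $\mu=0$ via continuity) and proves (i) by the triangle-inequality splitting $I+\mu(A+B)=\tfrac12(I+2\mu A)+\tfrac12(I+2\mu B)$, which is exactly your alternative argument and is equivalent to your primary convexity-plus-(ii) route.
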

\begin{proof}
The proof of item (i) follows the same technical steps as Lemma~\ref{lem:properties} (iv) and hence it is omitted here for brevity. To prove (ii) it suffices to note that, for any $A \in \K^{n \times n}$ and all $c,\mu>0$,
$m(cA,\mu) =  \frac{1}{\mu}\left(\norm{I + c\mu A} - 1\right) =
\frac{c}{c\mu}\norm{I + c\mu A} = c m(A ,c\mu )$. The extension to $\mu=0$ or $c=0$ follows from continuity.
\end{proof}
\textcolor{black}{Essentially, with the above result we showed that the
  sub-additive property of matrix measures still holds when $\mu(t)\ne
  0$. However, the matrix measures upper bounding $m(A+b,\mu(t))$ are
  defined over a different time scale. Similarly, it is interesting to
  note how in item (ii) the matrix measure on the right-hand side is
  defined on a time scale having as graininess function $c\mu(t)$. With
  the next result we give an additional property of $m(A,t)$ that will be
  useful in Section \ref{sec:pinning}. \textcolor{black}{For the continuous time case the
  result is known and also follows from convexity of $\mu(\cdot,0)$ on
  $\K^{n \times n}$ and Jensens's inequality, see
  \cite[Lemma~5.5.9\,(vi)]{Hin_Pri_05} and \cite{AmizareSontag2014}.} 
\begin{lemma}\label{lem:integral}
Let \textcolor{black}{ $\mu\geq0$ and
and $A:[0,1]\rightarrow \K^{n \times n}$
be Lebesgue integrable}. Then: $$\textcolor{black}{m\left(\int_0^1A(\eta)d\eta,\mu\right)\le \int_0^1m(A(\eta),\mu)d\eta.}$$
\end{lemma}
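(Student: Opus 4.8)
The plan is to reduce the integral inequality to the already-established convexity of $m(\cdot,\mu)$ in its first argument (Lemma~\ref{lem:properties}\,(iv)) combined with the continuity of $m(\cdot,\mu)$ established in Lemma~\ref{lem:properties}\,(vi). The case $\mu=0$ is the classical statement and, as the authors note, follows from Jensen's inequality; so I would focus on $\mu>0$, where in fact the argument is even more elementary because $m(A,\mu) = \tfrac{1}{\mu}(\norm{I+\mu A}-1)$ is given by an explicit formula involving a norm. For $\mu>0$ one computes directly
\begin{equation*}
 m\!\left(\int_0^1 A(\eta)\,d\eta,\mu\right)
 = \frac{1}{\mu}\left(\norm{I + \mu\int_0^1 A(\eta)\,d\eta} - 1\right)
 = \frac{1}{\mu}\left(\norm{\int_0^1 \big(I + \mu A(\eta)\big)\,d\eta} - 1\right),
\end{equation*}
using $\int_0^1 I\,d\eta = I$ and linearity of the Lebesgue integral. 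Then one invokes the integral triangle inequality for the Bochner (vector-valued Lebesgue) integral, $\norm{\int_0^1 M(\eta)\,d\eta} \le \int_0^1 \norm{M(\eta)}\,d\eta$, valid for any integrable matrix-valued $M$, with $M(\eta) = I+\mu A(\eta)$. This gives
\begin{equation*}
 m\!\left(\int_0^1 A(\eta)\,d\eta,\mu\right)
 \le \frac{1}{\mu}\left(\int_0^1 \norm{I+\mu A(\eta)}\,d\eta - 1\right)
 = \int_0^1 \frac{1}{\mu}\big(\norm{I+\mu A(\eta)}-1\big)\,d\eta
 = \int_0^1 m(A(\eta),\mu)\,d\eta,
\end{equation*}
where the middle equality again uses $\int_0^1 1\,d\eta = 1$. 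This settles the $\mu>0$ case cleanly.

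For $\mu=0$ I would either cite the continuous-time result directly (as the authors indicate, it follows from convexity of $m(\cdot,0)$ and Jensen's inequality, cf.\ \cite[Lemma~5.5.9\,(vi)]{Hin_Pri_05}), or obtain it by a limiting argument: take $\mu\searrow 0$ in the inequality just proved, using that $m(A(\eta),\mu)$ is monotone in $\mu$ by Lemma~\ref{lem:properties}\,(v) and converges to $m(A(\eta),0)$, so that monotone convergence applies to the right-hand side, while continuity of $m(\cdot,0)$ (Lemma~\ref{lem:properties}\,(vi)) handles the left-hand side. Either route is short.

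The only genuine point requiring care — the "main obstacle," though it is mild — is the measurability and integrability of $\eta\mapsto m(A(\eta),\mu)$ and of $\eta\mapsto \norm{I+\mu A(\eta)}$, so that the integrals on the right-hand side make sense. Since $A(\cdot)$ is Lebesgue integrable by hypothesis and $A\mapsto \norm{I+\mu A}$ is continuous (indeed Lipschitz) on $\K^{n\times n}$, the composition is measurable; integrability follows from the bound $\norm{I+\mu A(\eta)} \le 1 + \mu\norm{A(\eta)}$ and integrability of $\norm{A(\cdot)}$. I would state this in one sentence and then proceed with the two displays above.
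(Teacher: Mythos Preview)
Your argument is correct. The paper's proof is more compact: it simply observes that $m(\cdot,\mu)$ is convex on $\K^{n\times n}$ for every fixed $\mu\ge 0$ (Lemma~\ref{lem:properties}\,(iv)) and invokes Jensen's inequality once, without splitting into cases. Your explicit computation for $\mu>0$ is really Jensen's inequality for the convex function $A\mapsto \norm{I+\mu A}$ written out by hand via the integral triangle inequality, so the two arguments are the same in substance. The advantage of the paper's route is that it treats $\mu=0$ and $\mu>0$ uniformly and avoids the limiting argument altogether; the advantage of yours is that for $\mu>0$ it is self-contained and does not require the reader to recall the general Jensen inequality. Your remarks on measurability and integrability of $\eta\mapsto m(A(\eta),\mu)$ are a useful addition that the paper omits.
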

\begin{proof}
\textcolor{black}{By Lemma~\ref{lem:properties}\,(iv), the map $m(\cdot,t)$ is convex on $\K^{n \times
  n}$ for every $t\geq0$. The claim is immediate from Jensen's inequality.}
\end{proof}}
Finally, the following technical result will be used to show the existence of solutions for certain dynamics of interest.
\begin{lemma}\label{lem:measure_regressive}
\textcolor{black}{ Let \textcolor{black}{$A\in\mathcal {C}_{rd}(\T,\K^{n \times n})$} and
  consider the function $m(A(\cdot),\cdot): t \mapsto m(A(t),\mu(t))$, $t\in
\T$. Then }
  \begin{enumerate}[(i)] 
    \item $m(A(\cdot),\cdot)$  is positively regressive,
 if and only if $I+\mu(t)A(t) \ne 0$ for all $t
\in \T$. In particular, this is the case, if $A\in\Reg$.
\item $m(A(\cdot),\cdot)$ is rd-continuous.
\item \textcolor{black}{If
$c \in \R$ satisfies $c> m(A(t),\mu(t))$ for all 
$t\in\T$ then $c\in\mathcal{R}^+(\T,\R)$.
}
  \end{enumerate}
\end{lemma}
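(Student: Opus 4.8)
The plan is to prove the three items of Lemma~\ref{lem:measure_regressive} essentially by unwinding the definitions and invoking the already-established properties of $m(\cdot,\cdot)$. For item (i), recall that by definition $m(A(\cdot),\cdot)$ is positively regressive iff $1 + \mu(t)\, m(A(t),\mu(t)) > 0$ for all $t \in \T$. At right-dense points ($\mu(t)=0$) this is trivially $1>0$, so the content is at right-scattered points. There, using \eqref{eqn:matrix_measure}, $1 + \mu(t) m(A(t),\mu(t)) = 1 + \mu(t)\cdot\frac{1}{\mu(t)}\bigl(\norm{I+\mu(t)A(t)} - 1\bigr) = \norm{I + \mu(t)A(t)}$. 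So the positive regressivity condition reduces exactly to $\norm{I+\mu(t)A(t)} > 0$, i.e.\ $I + \mu(t)A(t) \neq 0$ as a matrix (since a norm is positive on nonzero vectors/matrices). This gives the "if and only if". The final clause "in particular, if $A \in \Reg$" is immediate: regressivity means $I + \mu(t)A(t)$ is invertible, hence nonzero.

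For item (ii), the map $t \mapsto m(A(t),\mu(t))$ is the composition of the continuous map $(B,h)\mapsto m(B,h)$ on $\K^{n\times n}\times[0,\infty)$ — which is Lemma~\ref{lem:properties}\,(vi) — with the map $t\mapsto (A(t),\mu(t))$. The subtlety is that $\mu(\cdot)$ and $A(\cdot)$ are only rd-continuous, not continuous; however rd-continuity is preserved under composition with a (globally) continuous function, so I would argue: at a right-dense point $t$, both $A(\cdot)$ and $\mu(\cdot)$ are continuous from the right (and $\mu(t)=0$), hence so is $m(A(\cdot),\cdot)$; at a left-dense point the left-hand limits of $A(\cdot)$ and $\mu(\cdot)$ exist, and continuity of $m$ lets us pass the limit through, giving existence of the left-hand limit of $m(A(\cdot),\cdot)$. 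More carefully, one should note that rd-continuity of $\mu(\cdot)$ itself is a standard fact (it follows from $\sigma$ being rd-continuous), so the pair $t \mapsto (A(t), \mu(t))$ is rd-continuous into $\K^{n\times n}\times[0,\infty)$, and then rd-continuity is stable under post-composition with continuous maps — this is exactly the definition of rd-continuity for maps on $\T\times X$ recalled in the preliminaries, applied with the continuous path $x(t) = (A(t),\mu(t))$ once $A$ is itself continuous, or directly by the limit argument above.

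For item (iii), suppose $c \in \R$ with $c > m(A(t),\mu(t))$ for all $t\in\T$. To show $c \in \Reg^+(\T,\R)$ we must check $1 + \mu(t) c > 0$ for all $t$. At right-dense points this is $1 > 0$. At a right-scattered point $t$, $1 + \mu(t)c > 1 + \mu(t)\,m(A(t),\mu(t)) = \norm{I + \mu(t)A(t)} \geq 0$, using the computation from item (i) and $\mu(t) > 0$; thus $1 + \mu(t)c > 0$. Done.

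The only genuine obstacle I anticipate is the rd-continuity argument in item (ii): one must be slightly careful that $\mu(\cdot)$ is rd-continuous and that "rd-continuous composed with continuous is rd-continuous" is either cited or checked, since the graininess function can jump. Everything else is a direct substitution using the identity $1 + \mu(t)\,m(A(t),\mu(t)) = \norm{I+\mu(t)A(t)}$ at right-scattered points, which is the one small observation that makes all three parts fall out cleanly.
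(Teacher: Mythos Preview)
Your proposal is correct and follows essentially the same approach as the paper: the key identity $1+\mu(t)\,m(A(t),\mu(t))=\norm{I+\mu(t)A(t)}$ at right-scattered points drives items (i) and (iii), and item (ii) is handled by composing the continuous map $m(\cdot,\cdot)$ (Lemma~\ref{lem:properties}\,(vi)) with the rd-continuous map $t\mapsto(A(t),\mu(t))$, citing rd-continuity of $\mu$ and stability of rd-continuity under post-composition with continuous functions (both of which the paper attributes to \cite[Theorem~1.60]{Boh_03}). The subtlety you flag in (ii) is exactly the one the paper addresses by citation.
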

\begin{proof}
\textcolor{black}{(i) For $\mu(t)>0$ we have
$$
1 + \mu(t)m(A(t),t) = \\
1 + \mu(t) \left( \frac{1}{\mu(t)}\left(\norm{I + \mu(t)A(t)} - 1\right)\right)=\norm{I + \mu(t)A(t)}.
$$
This shows the claim, as for $\mu(t)=0$ there is nothing to show.} 

\textcolor{black}{(ii) 
By \cite[Theorem 1.60]{Boh_03}, $\mu$ is rd-continuous and, by assumption, so is
$t\mapsto (A(t),\mu(t))$. By Lemma~\ref{lem:properties}\,(vi), the map $m$ is
continuous on $\K^{n \times n}\times [0, \infty)$. The claim follows as
the concatenation of a continuous function with an rd-continuous function
is rd-continuous, see again \cite[Theorem 1.60]{Boh_03}.
}

\textcolor{black}{(iii) Immediate from the proof of (i), as $1+\mu(t)c \geq 1 + \mu(t)m(A(t),\mu(t))\geq0$ and one of the inequalities has to be strict.}
\end{proof}
\begin{remark}
Essentially, Lemma \ref{lem:measure_regressive} states that, if
\textcolor{black}{ $A \in
\mathcal {C}_{rd}(\T,\textcolor{black}{\K}^{n \times n})$ and }  
\textcolor{black}{$I+\mu(t)A(t)\neq 0$ } 
for all $t\in\T$, then the exponential $e_m(t,s)$ is given in accordance to Definition \ref{def:exponential}. That is,
$$
e_m(t,s) := \exp\left(\int_s^t\xi_{\mu(\tau)}\left(m(A(\tau),\mu(\tau))\right)\Delta\tau\right), \forall t,s\in\T,
$$ 
where 
$$
\xi_{\mu(\tau)}\left(m(A(\tau),\mu(\tau))\right)  := \\
 \left\{\begin{array}{*{20}l}
\frac{1}{\mu(\tau)}\mathrm{Log}\bigl(1+\mu(\tau)m(A(\tau),\mu(\tau))\bigr), & \textcolor
{black}{\text { if }  }\,\,    \mu(\tau) \ne 0,\\
m(A(\tau),\textcolor {black} {0} ), & \textcolor
{black}{\text { if }  }\,\, \mu(\tau) = 0.
\end{array}\right.
$$
\end{remark}
\subsection{Algebraic expressions for some matrix measures of interest}
For some norms of practical interest (i.e. $\abs{\cdot}_i$, $i\in\{1,2,\infty\}$) there are well-known expressions for the {\em classic} matrix measure used to study continuous time dynamics, \cite{Vid_93,Hin_Pri_05}. With the following result we show how these expressions generalize to time scales.

\begin{lemma}
    \label{lem-concrete-mm}
Let $\T$ be a time scale and $A\in \K^{n \times n}$. Then:

\parindent0pt (i) for the Euclidean norm $\abs{\cdot}_2$ and the induced spectral
    norm $\norm{\cdot}_2$ the matrix measure is given by
$$
        m_2(A,\mu) = \\ \left\{\begin{array}{*{20}c}
\frac{1}{\mu}\left(\sigma_{\max}(I+\mu A)  - 1\right), & \text{ if }  \mu > 0,\\
 \lambda_{\max} \left(\frac{1}{2} (A + A^*) \right), & \text{ if } \mu = 0.
\end{array}\right.
$$
\parindent0pt (ii) for the 1-norm $\abs{\cdot}_1$ and the induced 
    column sum norm $\norm{\cdot}_1$ the matrix measure is given by
$$
        m_1(A,\mu) = \\ \left\{\begin{array}{*{20}c}
\max\limits_{j=1,\ldots,n}  \left( 
    \hat \Re\{a_{jj}\}(\mu) + \sum_{i\neq j}\abs{a_{ij}}\right)
, & \text{ if }  \mu(t) \ne 0,
\\
 \max\limits_{j=1,\ldots,n}  \left(\Re\{a_{jj}\} + \sum_{i\neq j}\abs{a_{ij}}    \right), & \text{ if } \mu = 0.
\end{array}\right.
$$

\parindent0pt (iii) for the $\infty$-norm $\abs{\cdot}_\infty$ and the induced 
    row sum norm $\norm{\cdot}_\infty$ the matrix measure is given by
$$
        m_\infty(A,\mu) = \\ \left\{\begin{array}{*{20}c}
\max\limits_{i=1,\ldots,n}  \left( 
    \hat \Re \{a_{ii}\}(\mu) + \sum_{j\neq i}\abs{a_{ij}}\right)
, & \text{ if }  \mu(t) \ne 0,
\\ 
\max_{i=1,\ldots,n}  \left(\Re\{a_{ii}\} + \sum_{j\neq i}\abs{a_{ij}}
\right),
 & \text{ if } \mu = 0.
\end{array}\right.
$$
\end{lemma}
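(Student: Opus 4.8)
The plan is to evaluate the definition~\eqref{eqn:matrix_measure} of the matrix measure directly on each of the three standard induced norms. For $\mu=0$ the asserted expressions are exactly the classical formulas for the logarithmic norm recorded in \cite{Vid_93,Hin_Pri_05}, and they are moreover the pointwise limits as $\mu\searrow0$ of the corresponding $\mu>0$ expressions; hence it is enough to treat $\mu>0$, in which case $m_i(A,\mu)=\mu^{-1}(\norm{I+\mu A}_i-1)$ by~\eqref{eqn:matrix_measure}.

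Part (i) is immediate: by definition the spectral norm of a matrix $B$ equals its largest singular value, so $\norm{I+\mu A}_2=\sigma_{\max}(I+\mu A)$, and substituting into~\eqref{eqn:matrix_measure} yields the claim.

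For parts (ii) and (iii) I would write $I+\mu A$ entrywise, its $(i,j)$-entry being $\delta_{ij}+\mu a_{ij}$. With the column sum norm $\norm{B}_1=\max_j\sum_i\abs{b_{ij}}$, the $j$-th column sum of $I+\mu A$ is $\abs{1+\mu a_{jj}}+\mu\sum_{i\neq j}\abs{a_{ij}}$; since for $\mu>0$ the map $r\mapsto(r-1)/\mu$ is affine and increasing it commutes with the maximum over $j$, giving
$$m_1(A,\mu)=\max_{j=1,\ldots,n}\left(\frac{\abs{1+\mu a_{jj}}-1}{\mu}+\sum_{i\neq j}\abs{a_{ij}}\right).$$
It then remains to identify $\mu^{-1}(\abs{1+\mu a_{jj}}-1)$ with the Hilger real part $\hat\Re\{a_{jj}\}(\mu)$. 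This follows because the scalar function $s\mapsto s^{-1}(\abs{1+s\lambda}-1)$ is continuous on $(0,\infty)$ for every $\lambda\in\K$, so the limit $s\searrow\mu$ appearing in the definition of $\hat\Re\{\lambda\}(\mu)$ coincides with its value at $s=\mu$ when $\mu>0$. Part (iii) is obtained by the same computation with rows in place of columns, using $\norm{B}_\infty=\max_i\sum_j\abs{b_{ij}}$.

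The only step beyond bookkeeping is this last identification of the Hilger real part at positive graininess with the elementary quotient, and even that reduces to a one-line continuity observation: $\abs{1+s\lambda}-1$ can vanish for $s>0$ only if $\lambda$ is real and negative with $s=-1/\lambda$, and the quotient extends continuously across that point, so no division-by-zero obstruction occurs. All the remaining content is just the explicit entrywise description of the column sum, row sum and spectral norms.
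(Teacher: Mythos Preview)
Your proposal is correct and follows essentially the same approach as the paper: both reduce to the $\mu>0$ case (citing classical results for $\mu=0$), read off the spectral-norm case directly, and compute the row/column sums of $I+\mu A$ entrywise, pulling the affine map $(r-1)/\mu$ through the maximum. Your write-up is in fact slightly more complete than the paper's, which stops at the expression $\max_i\bigl(\mu^{-1}(\abs{1+\mu a_{ii}}-1)+\sum_{j\neq i}\abs{a_{ij}}\bigr)$ and declares the result immediate, whereas you explicitly justify the identification with $\hat\Re\{a_{ii}\}(\mu)$ via continuity of $s\mapsto s^{-1}(\abs{1+s\lambda}-1)$ on $(0,\infty)$; your final remark about vanishing numerators is unnecessary (the only denominator is $s>0$, so continuity is automatic), but it does no harm.
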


\begin{proof}
    We only need to prove the result when $\mu > 0$. The expression for
    $m_2(A,\mu)$ directly follows from the definition of matrix measure when
    the Euclidean norm is used in (\ref{eqn:matrix_measure}). We now prove
    the statement form $m_\infty(A,\mu)$. If $\mu> 0$, the definition
    of matrix measure yields:
\begin{equation*}
\begin{split}
\frac{1}{\mu }\left(\norm{I + \mu A}_\infty - 1\right) & : =
\frac{1}{\mu}\left[\max_i\left(\abs{1+\mu a_{ii}}+\sum_{j\ne i}\abs{\mu a_{ij}}\right) -1 \right]\\
& = \max_i\left(\frac{1}{\mu}\abs{1+\mu a_{ii}}-1+\sum_{j\ne i}\abs{a_{ij}}\right)
\end{split}
\end{equation*}
and this immediately implies part (iii) of the statement. The proof for
part (ii) is similar and omitted here for the sake of brevity.
\end{proof}
\begin{remark}
In all three cases, it is interesting to see how the expressions
yield a continuous behaviour for $\mu \searrow 0$. Also, the structural
similarity in the cases $m_1(\cdot,\cdot)$ and $m_\infty(\cdot,\cdot)$ is striking as one simply needs
to replace the real part of the diagonal elements of the matrix $A$ with the Hilger real part to take into account the time scale.
\end{remark}
Next, by generalizing an upper bound due to Coppel, we present certain stability conditions for linear time-varying dynamical systems on time scales. The upper bound is also used in the proof of Section \ref{sec:nonlinear} where we consider nonlinear systems.

\section{Coppel's inequality on time scales}\label{sec:coppel}

\textcolor{black}{In order to introduce the generalized version of
  Coppel's inequality on time scales, we first relate the matrix measure
  to the initial growth rate of the matrix exponential $e_A(t,0)$. This}
is introduced by adapting Definition $5.5.7$ in \cite{Hin_Pri_05} to time
scale dynamics as follows \textcolor{black}{(recall the notation introduced
  in \eqref{eq:Tt0def}).} 
\begin{definition}
\label{def:inigrowth}
    Let $\T$ be a time scale with $t_0\in\T$ and
\textcolor{black}{\textcolor{black}{$A \in \K^{n \times n}$.} The initial growth rate
    of $A$ at time $t_0$ 
    is defined as 
    \begin{multline*}
        \nu(A,t_0) := \inf\left\{\beta \in\R: \exists\,\varepsilon>0 :
           \T_{(t_0,t_0+\varepsilon ]}  \neq \emptyset \text { and }
             \right.\\ 
             \left.\forall t\in\T_{(t_0,t_0+\varepsilon ]}
      : \norm{e_A(t,t_0)} \le e_{\beta}(t,t_0) \right\}.
    \end{multline*}
}
\end{definition}
\textcolor{black}{For the continuous-time case, it is 
  interesting to compare Definition~\ref{def:inigrowth} to
  \cite[Definition~5.5.7]{Hin_Pri_05}. There it is required that the bound
  $\norm{e^{At } } \leq e^{\beta t}$ holds for all $t\geq 0$. In the
  continuous time case, it is easy to see that the definitions are equivalent.}\textcolor{black}{ In
  the general case of time scales, it is too demanding to require that the
  inequality holds for all $t \in \T_{t_0}$, e.g. because for negative
  initial growth rates the expression $e_{\beta}(t,t_0)$ may become
  negative depending on the behavior of the time scale for large times.}
We now prove a result that explicitly relates the matrix measure on time
scales to the initial growth rate. Interestingly, this result provides a
time scale analogue of Proposition $5.5.8$ in \cite{Hin_Pri_05} for
continuous time dynamics and follows the proof given there.
\textcolor{black}{In the following statement, we denote by $\frac{\Delta^+}{dt}
f(t)\big|_{t=t_0}$ the right-sided $\Delta$-derivative of a function
$f:\T\to{\K}$ at the point $t_0\in \T$.} 
\begin{theorem}\label{lem:initial_growth}
Consider a  time scale $\T$ and 
\textcolor{black}{$A\in \K^{n \times n}$}. Then, \textcolor{black}{$\forall t_0 \in
  \T$ we have 
  \begin{equation}
\label{eq:thm1statement1}
      \nu(A,t_0) 
=
 \frac{\Delta^+}{dt} \norm{e_A(t,t_0)}\bigg|_{t=t_0}  = m(A,\mu(t_0)).
  \end{equation}
If $\mu(t_0) = 0$, then in addition
\begin{equation}
\label{eq:thm1statement2}
   \nu(A,t_0)= m(A,0) = \frac{\Delta^+}{dt}\ln\norm{e_A(t,t_0)}\bigg|_{t=t_0}.
\end{equation}
}
\end{theorem}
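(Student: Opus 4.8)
The plan is to split the argument according to whether $t_0$ is right-scattered or right-dense, and in each case to compute the three quantities in \eqref{eq:thm1statement1} directly from Definition~\ref{def:exponential} of the time-scale exponential and Definition~\ref{def:matrix_measure} of the matrix measure.

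First consider the case $\mu(t_0)>0$. Here $t_0$ is right-scattered, so $\sigma(t_0)=t_0+\mu(t_0)$ and the only point of $\T$ in a small interval $(t_0,t_0+\varepsilon]$ (for $\varepsilon\le\mu(t_0)$) is... actually the interval $\T_{(t_0,t_0+\varepsilon]}$ is empty for $\varepsilon<\mu(t_0)$, so the relevant evaluation happens at $t=\sigma(t_0)$ itself. Using property (ii) of the matrix exponential recalled before Lemma~\ref{lem:variation_of_constants}, $e_A(\sigma(t_0),t_0)=I+\mu(t_0)A$, hence $\norm{e_A(\sigma(t_0),t_0)}=\norm{I+\mu(t_0)A}$. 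On the other hand, for a scalar $\beta$, $e_\beta(\sigma(t_0),t_0)=1+\mu(t_0)\beta$. Thus the condition $\norm{e_A(t,t_0)}\le e_\beta(t,t_0)$ at $t=\sigma(t_0)$ reads $\norm{I+\mu(t_0)A}\le 1+\mu(t_0)\beta$, i.e. $\beta\ge\frac{1}{\mu(t_0)}(\norm{I+\mu(t_0)A}-1)=m(A,\mu(t_0))$. Taking the infimum over admissible $\beta$ gives $\nu(A,t_0)=m(A,\mu(t_0))$. For the middle term, the right-sided $\Delta$-derivative of $f(t)=\norm{e_A(t,t_0)}$ at a right-scattered point is $\frac{f(\sigma(t_0))-f(t_0)}{\mu(t_0)}=\frac{\norm{I+\mu(t_0)A}-1}{\mu(t_0)}=m(A,\mu(t_0))$, since $f(t_0)=\norm{I}=1$. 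This settles \eqref{eq:thm1statement1} in the right-scattered case.

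Now consider $\mu(t_0)=0$, i.e. $t_0$ is right-dense. Here I would follow the proof of \cite[Proposition~5.5.8]{Hin_Pri_05} essentially verbatim, since near a right-dense point the time scale looks locally like an interval of $\R$ and $e_A(t,t_0)$, $e_\beta(t,t_0)$ are the classical exponentials $e^{A(t-t_0)}$, $e^{\beta(t-t_0)}$ for $t$ in that right-dense neighborhood (more precisely, one uses that $\Delta$-derivatives at right-dense points coincide with ordinary right-derivatives, and the defining integral in Definition~\ref{def:exponential} reduces to a Riemann integral there). The key computation is $\frac{d^+}{dt}\norm{e^{A(t-t_0)}}\big|_{t=t_0}=\lim_{h\searrow0}\frac{\norm{e^{Ah}}-1}{h}=\lim_{h\searrow0}\frac{\norm{I+hA+o(h)}-1}{h}=\lim_{h\searrow0}\frac{\norm{I+hA}-1}{h}=m(A,0)$, where the $o(h)$-term is absorbed using $\bigl|\,\norm{I+hA+o(h)}-\norm{I+hA}\,\bigr|\le o(h)$. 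The equality $\nu(A,t_0)=m(A,0)$ follows because $\norm{e^{A(t-t_0)}}\le e^{\beta(t-t_0)}$ for small $t>t_0$ is equivalent, upon taking $\frac{d^+}{dt}\log$ at $t_0$, to $\beta\ge m(A,0)$; conversely any $\beta>m(A,0)$ works for small $\varepsilon$ by the derivative bound and Gronwall/comparison on the time scale. Finally, since $\norm{e_A(t_0,t_0)}=1$, the chain rule gives $\frac{\Delta^+}{dt}\ln\norm{e_A(t,t_0)}\big|_{t=t_0}=\frac{1}{\norm{e_A(t_0,t_0)}}\cdot\frac{\Delta^+}{dt}\norm{e_A(t,t_0)}\big|_{t=t_0}=m(A,0)$, which is \eqref{eq:thm1statement2}.

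The main obstacle I anticipate is making the equality $\nu(A,t_0)=m(A,\mu(t_0))$ rigorous in the right-dense case with the "for all $t$ in a one-sided neighborhood" quantifier in Definition~\ref{def:inigrowth}: one direction ($\nu\ge m$) is a one-sided derivative estimate at $t_0$, but the other direction ($\nu\le m$, i.e. that every $\beta>m(A,0)$ is actually admissible on some genuine interval) requires a comparison/Gronwall-type argument on the time scale to propagate the infinitesimal bound to a finite neighborhood. I would handle this by showing $g(t):=\norm{e_A(t,t_0)}$ satisfies $g^\Delta(t)\le m(A,\mu(t))\,g(t)\le\beta g(t)$ for $t$ near $t_0$ (using sub-multiplicativity of the induced norm together with property (v) of Lemma~\ref{lem:properties} to bound $m(A,\mu(t))\le m(A,0)<\beta$), and then invoking the time-scale comparison theorem to conclude $g(t)\le e_\beta(t,t_0)$; the right-scattered subcase of this propagation is trivial since the bound at $\sigma(t_0)$ is an equality. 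One should also note explicitly that $e_\beta(t,t_0)>0$ for $t$ sufficiently close to $t_0$ so that the bound is meaningful, which is exactly why Definition~\ref{def:inigrowth} only demands it on $\T_{(t_0,t_0+\varepsilon]}$ rather than all of $\T_{t_0}$.
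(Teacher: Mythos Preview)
Your right-scattered case is correct and essentially identical to the paper's. The right-dense case, however, contains a genuine misconception: a right-dense point $t_0$ does \emph{not} in general admit a real interval to its right inside $\T$ (consider $\T=\{0\}\cup\{1/n:n\in\N\}$ at $t_0=0$), so $e_A(t,t_0)$ is \emph{not} the classical exponential $e^{A(t-t_0)}$ and the defining integral in Definition~\ref{def:exponential} does \emph{not} reduce to a Riemann integral. Your $o(h)$ computation happens to give the right answer, but the correct justification is the one the paper uses: $e_A(\cdot,t_0)$ is $\Delta$-differentiable at $t_0$ with $e_A^\Delta(t_0,t_0)=A$, so at the right-dense point $t_0$ one has $e_A(t_0+h,t_0)=I+hA+o(h)$ along $\T$; then
\[
\left|\frac{\norm{e_A(t_0+h,t_0)}-1}{h}-\frac{\norm{I+hA}-1}{h}\right|\le\frac{\norm{e_A(t_0+h,t_0)-I-hA}}{h}\to 0.
\]
No appeal to the classical matrix exponential is needed (or valid).

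For the direction $\nu(A,t_0)\le m(A,0)$ your proposal goes via a differential inequality $g^\Delta\le m(A,\mu(t))g$ plus a time-scale comparison theorem. This can be made to work (and is essentially what the paper does later in Corollary~\ref{cor:PhiA-growth}), but it is heavier than necessary here and you would still have to justify the existence of the right $\Delta$-derivative of $g$ at \emph{every} nearby point, not just at $t_0$. The paper's argument is more direct: once $\frac{\Delta^+}{dt}\norm{e_A(t,t_0)}\big|_{t=t_0}=m(A,0)=:\beta$ is known, for any $\gamma>0$ the definition of the right derivative gives $\norm{e_A(t,t_0)}\le 1+(\beta+\gamma)(t-t_0)$ on some $\T_{(t_0,t_0+\varepsilon]}$, and then the elementary time-scale inequality $1+p(t-t_0)\le e_p(t,t_0)$ for $p\in\mathcal{R}^+$ (\cite[Theorem~6.2]{Boh_03}) yields $\norm{e_A(t,t_0)}\le e_{\beta+\gamma}(t,t_0)$, hence $\nu(A,t_0)\le\beta+\gamma$. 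This avoids Gronwall entirely and needs the derivative only at the single point $t_0$.
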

\begin{proof} 
\textcolor{black}{We prove the result by considering the two cases $\mu(t_0)> 0$ and $\mu(t_0) = 0$.}\\
\noindent {\em 
Case (i): $\mu(t_0)> 0$.} \textcolor{black}{The second equality in
\eqref{eq:thm1statement1} follows from standard calculus on time scales,
see \cite[Theorem~1.16\,(ii)]{Boh_03}. For the first equality, we choose
$\varepsilon=\mu(t_0)$, so that $\T_{(t_0,t_0+\varepsilon ]}=\{
\sigma(t_0) \}$. Define $m:= m(A,\mu(t_0))$. 
We then have
\begin{equation}
\label{eq:estimate-normeAt-discrete}
  e_m(\sigma(t_0),t_0) = 1+ \mu(t_0)m(A,\mu(t_0))
  \stackrel{\eqref{eqn:matrix_measure}}{=} \norm{I+\mu(t_0)A}  
  = \norm{e_A(\sigma(t_0),t_0)}.
\end{equation}
Clearly, if $\beta< m(A,\mu(t_0))$, then $e_\beta(\sigma(t_0),t_0)<
e_m(\sigma(t_0),t_0)$ and the claim follows.}

\noindent {\em 
  Case (ii): $\mu(t_0) = 0$.} \textcolor{black}{First note that in this case
$\T_{(t_0,t_0+\varepsilon ]}\neq \emptyset$ for any $\varepsilon>0$. As
  $\mu$ and $m(A,\cdot)$ are both rd-continuous, we can, for a given
  $\beta> m(A,\mu(t_0))=m(A,0)$, chose $\varepsilon>0$ such that
  $m(A,\mu(t)) < \beta$ for all $t \in \T_{(t_0,t_0+\varepsilon ]}$ and
  such that $\beta \in {\mathcal R}^+(\T_{[t_0,t_0+\varepsilon ]}, \R)$
  and $A\in {\mathcal R}(\T_{[t_0,t_0+\varepsilon ]}, \K^{n \times n})$.}

%
\textcolor{black}{ We begin by showing the second equality in \eqref{eq:thm1statement1}. For $h>0$ we have
\begin{equation}
\label{eq:estimate-ddtnormeAt}
\abs{\frac{\norm{e_A(t_0+h,t_0)}-1}{h} - \frac{\norm{I+hA}-1}{h}} \le \frac{\norm{e_A(t_0+h,t_0)-I-hA}}{h}. 
\end{equation}
As $t_0$ is right-dense and $e_A(t_0+h,t_0)$ is right differentiable at
$h=0$ with
derivative $A$, \cite[Theorem~1.16\,(iii)]{Boh_03} implies that the right hand
side tends to $0$ as $h\searrow 0$. The second summand on the left
converges to $m(A,0)$ by definition.  In turn, this implies that the
right derivative in (\ref{eq:thm1statement1}) exists at $t =
t_0$ and is equal to $m(A,0)$. 
Now, by the chain rule (see \cite[Theorem~1.87]{Boh_03}) :
\begin{equation*}
\begin{split}
\frac{\Delta^+}{dt}\ln\norm{e_A(t,t_0)}\bigg|_{t=t_0} & = \frac{1}{\norm{e_A(t_0,t_0)}}\cdot \frac{\Delta^+}{dt}\norm{e_A(t,t_0)}\bigg|_{t=t_0}  = \frac{\Delta^+}{dt}\norm{e_A(t,t_0)}\bigg|_{t=t_0},
 \end{split}
\end{equation*}
thus showing the second equality in (\ref{eq:thm1statement2}). 
To complete the proof let $\beta\in \R$ and $\varepsilon>0$ such that
$\norm {e_A(t,t_0)} \leq e_\beta(t,t_0)$ for all $t\in
\T_{(t_0,t_0+\varepsilon ] }$. Then it follows for all such $t$ that
\begin{equation*}
    \frac{\norm {e_A(t,t_0)}-1}{(t-t_0)} \leq \frac{e_\beta(t,t_0)-1}{(t-t_0)}
\end{equation*}
and taking the limit $t\searrow t_0$ we obtain
$\frac{\Delta^+}{dt}\norm{e_A(t,t_0)}\big|_{t=t_0} \leq
\frac{\Delta^+}{dt}\norm{e_\beta(t,t_0)}\big|_{t=t_0} = \beta$.
This implies
\begin{equation}\label{eqn:bound_nu}
\frac{d^+}{dt}\norm{e_A(t,t_0)}\bigg|_{t=t_0} \le \nu(A(t_0)).
\end{equation}
Conversely, assume that $\frac{d^+}{dt}\norm{e_A(t,t_0)}\big|_{t=t_0} =
\beta$ and let $\gamma >0$. 
By choosing $\varepsilon>0$ sufficiently small, we can guarantee that
$\beta+\gamma \in {\mathcal R}^+(\T_{[t_0,t_0+\varepsilon ]},\R)$. Possibly
decreasing $\varepsilon>0$ further, by definition of the derivative we
have for all $t\in \T_{(t_0,t_0+\varepsilon ]}$ that
\begin{equation*}
   \norm{e_A(t,t_0)} \leq 1 + (\beta+\gamma)(t-t_0) \leq e_{\beta+\gamma}(t-t_0),
\end{equation*}
where we have used \cite[Theorem~6.2]{Boh_03} for the last inequality.
Together with (\ref{eqn:bound_nu}) 
the conclusion follows.} 
\end{proof}

\textcolor{black}{We note an immediate implication for the growth of the norm of transition maps.
\begin{corollary}
\label{cor:PhiA-growth}
Let $\T$ be a time scale and $A:\T\to \K^{n\times n}$ be rd-continuous. Define $m(t) := m(A(t),t) = \nu(A(t),t), t\in \T$.
Then we have for the associated transition operator $\Phi_A$ that
\begin{equation}
    \label{eq:Phi_A-growth-mm}
    \norm{\Phi_A(t,t_0)} \leq e_m(t,t_0),\quad \forall \, t_0\in \T, t\in \T_{t_0}.
\end{equation}
\end{corollary}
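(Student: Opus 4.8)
The plan is to reduce the estimate for the transition operator to the scalar comparison lemma for dynamic inequalities, using Theorem~\ref{lem:initial_growth} to control the right-hand $\Delta$-derivative of $t\mapsto\norm{\Phi_A(t,t_0)}$ pointwise. First I would fix $t_0\in\T$ and set $g(t):=\norm{\Phi_A(t,t_0)}$ for $t\in\T_{t_0}$; this is a well-defined, continuous (indeed $\Delta$-differentiable where needed) real-valued function with $g(t_0)=1$. The key local step is to show that at every $t\in\T_{t_0}$ one has the dynamic inequality
\begin{equation*}
  \frac{\Delta^+}{dt} g(s)\bigg|_{s=t} \;\le\; m(t)\, g(t),
\end{equation*}
where $m(t)=m(A(t),\mu(t))$. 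To see this, use the cocycle property $\Phi_A(s,t_0)=\Phi_A(s,t)\Phi_A(t,t_0)$ together with submultiplicativity of the induced norm to get $g(s)\le\norm{\Phi_A(s,t)}\,g(t)$ for $s\ge t$, and then note that $\Phi_A(\cdot,t)$ is exactly the matrix exponential-type transition starting at $t$, so Theorem~\ref{lem:initial_growth} applied at the point $t$ gives $\frac{\Delta^+}{ds}\norm{\Phi_A(s,t)}\big|_{s=t}=m(A(t),\mu(t))=m(t)$. Combining these yields the displayed inequality after passing to the limit $s\searrow t$ (taking care of the two cases $\mu(t)>0$ and $\mu(t)=0$ exactly as in the proof of Theorem~\ref{lem:initial_growth}: for $\mu(t)>0$ the inequality is the one-step estimate $g(\sigma(t))\le\norm{I+\mu(t)A(t)}g(t)=(1+\mu(t)m(t))g(t)$, which is literally the $\Delta$-derivative statement; for $\mu(t)=0$ it is the limit of difference quotients).

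Next I would invoke the standard comparison principle on time scales: by Lemma~\ref{lem:measure_regressive}, $m(\cdot)=m(A(\cdot),\cdot)$ is rd-continuous and positively regressive (here we may assume $I+\mu(t)A(t)\neq 0$ for all $t$; if this fails at some right-scattered $t$ the exponential $e_m$ and the relevant one-step factor both vanish and the inequality degenerates harmlessly, or one simply restricts to the regressive case as the paper does elsewhere), so the scalar dynamic IVP $y^\Delta=m(t)y$, $y(t_0)=1$ has the unique solution $y(t)=e_m(t,t_0)$, and this solution stays positive by positive regressivity. The differential inequality $g^\Delta(t)\le m(t)g(t)$ with $g(t_0)=1=y(t_0)$, combined with positivity of $e_m$, then gives $g(t)\le e_m(t,t_0)$ for all $t\in\T_{t_0}$ by the comparison theorem for dynamic equations (see e.g.\ \cite[Theorem~6.1]{Boh_03} or its analogue for $\le$-inequalities). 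This is exactly \eqref{eq:Phi_A-growth-mm}.

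The main obstacle is making the local inequality $\frac{\Delta^+}{dt}g\le m(t)g$ rigorous uniformly in $t$ rather than only at the fixed base point $t_0$: Theorem~\ref{lem:initial_growth} is stated for the \emph{constant} matrix $A$ and the matrix exponential $e_A(\cdot,t_0)$, whereas here $A(\cdot)$ is time-varying and we need the estimate for the transition operator $\Phi_A(\cdot,t)$ at a running base point $t$. The resolution is to observe that the right $\Delta$-derivative at $t$ only ``sees'' an infinitesimally small forward window: at a right-scattered $t$ it sees precisely the one-step map $I+\mu(t)A(t)$, and at a right-dense $t$ the right-differentiability of $\Phi_A(\cdot,t)$ at $t$ with derivative $A(t)$ (from \eqref{eqn:basic_sys}) reduces the difference quotient estimate to \eqref{eq:estimate-ddtnormeAt} with $A$ replaced by $A(t)$ — the frozen-coefficient argument of Theorem~\ref{lem:initial_growth} applies verbatim. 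Once this pointwise bound is in hand, the comparison step is routine. A secondary technical point is the validity of the comparison theorem for a mere differential \emph{inequality} (not equality) and for an rd-continuous but not necessarily continuous right-hand side; this is standard in the time-scale literature and requires only positive regressivity of $m(\cdot)$, which Lemma~\ref{lem:measure_regressive} supplies.
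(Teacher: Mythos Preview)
Your proposal is correct and follows essentially the same route as the paper: establish the pointwise inequality $\frac{\Delta^+}{ds}\norm{\Phi_A(s,t_0)}\big|_{s=t}\le m(t)\norm{\Phi_A(t,t_0)}$ via the cocycle property together with the frozen-coefficient computation from Theorem~\ref{lem:initial_growth} (handling $\mu(t)>0$ and $\mu(t)=0$ separately), then apply the comparison result \cite[Theorem~6.1]{Boh_03}, with the degenerate non-regressive case treated by observing that both sides vanish. The paper's proof is organized identically, including your observation that the arguments behind \eqref{eq:estimate-normeAt-discrete} and \eqref{eq:estimate-ddtnormeAt} transfer verbatim from $e_A$ to $\Phi_A$ at the running base point.
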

\begin{proof}
We first note, that the arguments used in \eqref{eq:estimate-normeAt-discrete} and \eqref{eq:estimate-ddtnormeAt} apply equally well to $\Phi_A(t,t_0)$ at $t=t_0$ so that for all $t\in\T$ we have 
\begin{equation}
\label{eq:help-cor1}
    \frac{\Delta^+}{ds} \norm{\Phi_A(s,t)}\big|_{s=t}=m(A(t),\mu(t)).
\end{equation}
Fix $t_0\in\T$. Then for $t\in \T_{t_0}$ with $\mu(t) = 0$ we have
\begin{multline*}
     \frac{\Delta^+}{ds} \norm{\Phi_A(s,t_0)}\bigg|_{s=t}
     = 
     \lim_{h\searrow 0} \frac{\norm{\Phi_A(t+h,t_0)}-\norm{\Phi_A(t,t_0)}}{h} \leq \\
    \lim_{h\searrow 0} \frac{\norm{\Phi_A(t+h,t)}-1}{h}  \norm{\Phi_A(t,t_0)} 
     \stackrel{\eqref{eq:help-cor1}}{=} m(A(t),\mu(t))\norm{\Phi_A(t,t_0)}.
\end{multline*}
For the case $\mu(t)>0$ the same inequality follows directly from \eqref{eq:estimate-normeAt-discrete}.
With this estimate we can now use a standard comparison result.
For all $[t_0,t_1]$ such that $\norm{\Phi_A(\cdot,t_0)}$ is regressive on $\T_{[t_0,t_1]}$ we have by Lemma~\ref{lem:measure_regressive} that $m\in \mathcal{R}^+(\T_{[t_0,t_1]},\R)$. We apply \cite[Theorem~6.1]{Boh_03} to conclude that \eqref{eq:Phi_A-growth-mm} holds for $t\in[t_0,t_1]$.
(The proof given in \cite{Boh_03} assumes differentiability instead of right-differentiability, but this does not change the argument.)
If $\norm{\Phi_A(\cdot,t_0)}$ is not regressive on $\T_{[t_0,t]}$,
then $\norm{\Phi_A(t,t_0)} = 0 = e_m(t,t_0)$. This completes the proof. 
\end{proof}
}

We now consider the $n$-dimensional inhomogeneous linear system on the time scale $\T$
\begin{equation}\label{eqn:linear_sys}
x^\Delta(t) = A(t) x(t) + g(t), \ \ x(t_0) := x_0, \ \ t_0 \in \T,
\end{equation}
with $x\in\textcolor{black}{\K^n}$ and where $A(\cdot): \T \to \textcolor{black}{\K}^{n \times n}$ is 
\textcolor{black}{rd-continuous} 
and
$g(\cdot):\T \to \textcolor{black}{\K}^n$ is {\em rd}-continuous so that a unique solution for the above dynamics exists. The following result gives a generalization of the well-known upper bound due to Coppel (see e.g. \cite[Theorem $3$, Section $2.5$]{Vid_93}) to systems evolving on time scales.
%
\begin{lemma}\label{lem:coppel}
Consider a vector norm, $\abs{\cdot}$, with its induced matrix measure, $m(\cdot,\cdot)$, on the time scale $\T$. Assume that 
\textcolor{black}{$A(\cdot)\in\Crd$,}
$g(\cdot)\in\Crd$ and that there
exists some $\bar g < +\infty$ such that $\abs{g(t)} \le \bar g$ for all
$t \in \T$. \textcolor{black}{As in Corollary~\ref{cor:PhiA-growth} denote $m(t):= m(A(t),\mu(t)), t\in\T$.} Then for all  initial conditions, \textcolor{black}{$t_0\in \T$,} $x(t_0)
=x_0 \in \K^n$, the corresponding solution $x(\cdot)=x(\cdot;t_0,x_0)$ satisfies,
\begin{equation}\label{eqn:coppel}
\begin{split}
\abs{x(t)} & \le \abs{x_0}e_{m}(t,t_0) + \bar g \int_{t_0}^t  e_{m}(t,\sigma(\tau))\Delta\tau, \ \ {\forall t\in\T_{t_0}}.
\end{split}
\end{equation} 
\end{lemma}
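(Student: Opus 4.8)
The plan is to derive \eqref{eqn:coppel} from the variation of constants formula combined with the transition operator estimate of Corollary~\ref{cor:PhiA-growth}. Since $A(\cdot)$ and $g(\cdot)$ are rd-continuous, Lemma~\ref{lem:variation_of_constants} applies and gives, for the solution $x(\cdot)=x(\cdot;t_0,x_0)$,
\begin{equation*}
x(t) = \Phi_A(t,t_0)x_0 + \int_{t_0}^t \Phi_A(t,\sigma(\tau)) g(\tau)\,\Delta\tau, \qquad t\in\T_{t_0}.
\end{equation*}
Applying the vector norm $\abs{\cdot}$, the triangle inequality, and the corresponding inequality for the time-scale integral (see e.g.\ \cite[Theorem~1.77]{Boh_03}) yields
\begin{equation*}
\abs{x(t)} \le \norm{\Phi_A(t,t_0)}\abs{x_0} + \int_{t_0}^t \norm{\Phi_A(t,\sigma(\tau))}\abs{g(\tau)}\,\Delta\tau .
\end{equation*}

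Next I would bound each transition operator term using Corollary~\ref{cor:PhiA-growth}, which with $m(t)=m(A(t),\mu(t))$ gives $\norm{\Phi_A(t,t_0)}\le e_m(t,t_0)$ directly. For the integrand, observe that $\tau$ ranges over $[t_0,t)\cap\T$, hence $\sigma(\tau)\in\T$ and $\sigma(\tau)\le t$, i.e.\ $t\in\T_{\sigma(\tau)}$; applying the corollary with initial time $\sigma(\tau)$ gives $\norm{\Phi_A(t,\sigma(\tau))}\le e_m(t,\sigma(\tau))$. Since $\abs{g(\tau)}\le\bar g$ and $e_m(t,\sigma(\tau))\ge 0$ --- the time-scale exponential of the positively regressive function $m$ (Lemma~\ref{lem:measure_regressive}) is positive, and by convention equals $0$ on the part of the time scale where $m$ fails to be regressive --- monotonicity of the $\Delta$-integral yields
\begin{equation*}
\int_{t_0}^t \norm{\Phi_A(t,\sigma(\tau))}\abs{g(\tau)}\,\Delta\tau \le \bar g \int_{t_0}^t e_m(t,\sigma(\tau))\,\Delta\tau ,
\end{equation*}
and combining this with $\norm{\Phi_A(t,t_0)}\le e_m(t,t_0)$ produces exactly \eqref{eqn:coppel}.

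The core of the argument is therefore short; what needs care is bookkeeping rather than substance. One must check that Corollary~\ref{cor:PhiA-growth} is legitimately invoked for $\Phi_A(t,\sigma(\tau))$, which is why the range $\tau<t$ (equivalently $\sigma(\tau)\le t$) matters, and one must make sure the estimates degrade gracefully when $A(\cdot)$ is not regressive --- i.e.\ when $\norm{\Phi_A(\cdot,s)}$ vanishes past some point, so that $e_m(\cdot,s)$ likewise vanishes and every intermediate inequality remains meaningful; this is handled by the respective clauses of Corollary~\ref{cor:PhiA-growth} and the convention for $e_m$ from the remark following Lemma~\ref{lem:measure_regressive}. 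An alternative, self-contained route would avoid the transition operator entirely: estimate the upper right $\Delta$-derivative of $t\mapsto\abs{x(t)}$ along solutions to obtain the dynamic inequality $\abs{x}^{\Delta}(t)\le m(t)\abs{x(t)}+\bar g$ in the Dini sense, then invoke a time-scale comparison theorem such as \cite[Theorem~6.1]{Boh_03}; I would nonetheless prefer the variation-of-constants route here, since Corollary~\ref{cor:PhiA-growth} has already done the analytical work.
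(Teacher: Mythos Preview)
Your proof is correct and follows essentially the same route as the paper: variation of constants (Lemma~\ref{lem:variation_of_constants}), triangle inequality under the integral, then Corollary~\ref{cor:PhiA-growth} to bound both $\norm{\Phi_A(t,t_0)}$ and $\norm{\Phi_A(t,\sigma(\tau))}$, followed by $\abs{g(\tau)}\le\bar g$. Your write-up is in fact more careful than the paper's, which compresses the argument into three displayed lines; the additional remarks on $\sigma(\tau)\le t$ and the non-regressive case are sound bookkeeping that the paper leaves implicit.
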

\begin{proof}
Since $A(\cdot)\in\Crd$ and $g(\cdot)\in\Crd$, by means of Lemma \ref{lem:variation_of_constants}, the unique solution of (\ref{eqn:linear_sys}) is given by
$
x(t) = \Phi_A(t,t_0)x_0 + \int_{t_0}^t \Phi_A(t,\sigma(\tau))g(\tau)\Delta\tau
$.
Therefore,
\begin{equation*}
\begin{split}
\abs{x(t)} & =  \abs{\Phi_A(t,t_0)x_0 + \int_{t_0}^t \Phi_A(t,\sigma(\tau))g(\tau)\Delta\tau}\\
& \le \norm{\Phi_A(t,t_0)}\abs{x_0} + \int_{t_0}^t\norm{\Phi_A(t,\sigma(\tau))}\norm{g(\tau)}\Delta\tau  \\
& \le  \abs{x_0}e_{m}(t,t_0) + \bar g \int_{t_0}^te_{m}(t,\sigma(\tau))\Delta\tau,
\end{split}
\end{equation*}
where we used \textcolor{black}{Corollary~\ref{cor:PhiA-growth}} 
to obtain the last inequality.
\end{proof}

\section{Stability and input-to-state stability of linear systems}\label{sec:stability}
We now make use of the generalized Coppel's inequality on time scales (Lemma \ref{lem:coppel}) to study stability and input-to-state stability of linear time-varying systems evolving on time-scales. Specifically, we first consider stability of {the $n$-dimensional system}
\begin{equation}
    \label{eq:1}
    y^\Delta (t) = A(t) y(t), \quad
    t \in \T.
\end{equation}
 Then, we obtain explicit input-to-state stability properties for the inhomogeneous equation (\ref{eqn:linear_sys}) for  bounded inputs $g(\cdot)$.
\subsection{Stability}
We say that (the zero position of) system (\ref{eq:1}) is: (i) exponentially stable if there exists $\beta >0$ such that for all $t_0 \in
\T$ there exists a constant $M=M(t_0)\geq 0$ such that $\norm {\Phi_A(t,t_0)} \leq M e^{-\beta(t-t_0)}$, $\forall t\in \T_{t_0}$; (ii) uniformly exponentially stable, if there exist $M,\beta >0$ such that
$\norm {\Phi_A(t,t_0)} \leq M e^{-\beta(t-t_0)}$, $\forall t_0\in\T, t\in \T_{t_0}$.
It is obvious from the definition that uniform exponential stability
implies exponential stability. The converse is false. 
\textcolor{black}{We can then state the following:}
\begin{corollary}
\label{cor:expstab}
    Under the assumptions of Lemma~\ref{lem:coppel} with $g \equiv 0$. The
    time-varying linear system \eqref{eq:1} is 
    \begin{enumerate}[(i)] 
      \item exponentially stable, if $e_m(t,t_0)$ is exponentially stable.
      \item uniformly exponentially stable, if $e_m(t,t_0)$ is uniformly
        exponentially stable. 
    \end{enumerate}
In particular, if $\mu(t) \leq \overline{\mu}$ for all $t\in \T$ and if
there exists an $\varepsilon>0$ sufficiently small so that 
$m(A(t),t) \in ( - 2\overline{\mu}^{-1}  + \varepsilon, -\varepsilon)$, for all 
$t\in \T$, then \eqref{eq:1} is uniformly exponentially stable.
\end{corollary}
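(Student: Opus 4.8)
The plan is to get (i) and (ii) for free from the transition-operator bound already established, and to verify the concrete sufficient condition by a one-line estimate on the cylinder transformation. For (i) and (ii): under the standing hypotheses ($A(\cdot)\in\Crd$, $g\equiv0$), Corollary~\ref{cor:PhiA-growth} — equivalently Lemma~\ref{lem:coppel} with $g\equiv0$ after taking the supremum over $\abs{x_0}\le1$ — gives the pointwise comparison $\norm{\Phi_A(t,t_0)}\le e_m(t,t_0)$ for all $t_0\in\T$, $t\in\T_{t_0}$, where $m(t)=m(A(t),t)$. If $e_m$ is exponentially stable, i.e. there is $\beta>0$ such that for each $t_0$ some $M(t_0)\ge0$ satisfies $e_m(t,t_0)\le M(t_0)e^{-\beta(t-t_0)}$ on $\T_{t_0}$, then chaining the two inequalities gives $\norm{\Phi_A(t,t_0)}\le M(t_0)e^{-\beta(t-t_0)}$, which is exactly exponential stability of \eqref{eq:1}; when $M,\beta$ can be taken independent of $t_0$ the same chain gives uniform exponential stability. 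So these two items need no new argument.

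For the concrete condition it suffices, by (ii), to show that $\mu(t)\le\overline{\mu}$ and $m(A(t),t)\in(-2\overline{\mu}^{-1}+\varepsilon,-\varepsilon)$ for all $t\in\T$ imply $e_m(t,t_0)\le e^{-\varepsilon(t-t_0)}$ for all $t_0\in\T$, $t\in\T_{t_0}$ — that is, uniform exponential stability of $e_m$ with $M=1$, $\beta=\varepsilon$. Since $-\varepsilon>m(A(t),t)$ for every $t$, Lemma~\ref{lem:measure_regressive}\,(iii) shows $-\varepsilon\in\mathcal{R}^+(\T,\R)$, so $1-\mu(t)\varepsilon>0$ throughout (this is where ``$\varepsilon$ sufficiently small'', i.e. $\varepsilon<\overline{\mu}^{-1}$, enters; it also makes the prescribed interval non-empty). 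The core estimate is the elementary inequality $\mathrm{Log}(1+x)\le x$ for $x>-1$: at any $\tau$ with $\mu(\tau)>0$ and $1+\mu(\tau)m(A(\tau),\tau)>0$ it gives
$$\xi_{\mu(\tau)}\bigl(m(A(\tau),\tau)\bigr)=\frac{1}{\mu(\tau)}\mathrm{Log}\bigl(1+\mu(\tau)m(A(\tau),\tau)\bigr)\le m(A(\tau),\tau)<-\varepsilon,$$
while for $\mu(\tau)=0$ the left-hand side equals $m(A(\tau),\tau)<-\varepsilon$ by definition of $\xi$. Integrating this pointwise bound in $\Delta\tau$ and using monotonicity of the time-scale integral yields $\int_{t_0}^t\xi_{\mu(\tau)}(m(A(\tau),\tau))\,\Delta\tau\le-\varepsilon(t-t_0)$, hence $e_m(t,t_0)\le e^{-\varepsilon(t-t_0)}$, as wanted.

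The only point requiring care — and where I expect the (mild) obstacle to lie — is a possible degenerate instant $\tau_\ast$ at which $1+\mu(\tau_\ast)m(A(\tau_\ast),\tau_\ast)=\norm{I+\mu(\tau_\ast)A(\tau_\ast)}=0$, equivalently $A(\tau_\ast)=-\mu(\tau_\ast)^{-1}I$, which the lower bound in the hypothesis does not in general rule out and at which the logarithm above is undefined. I would dispose of it exactly as in the proof of Corollary~\ref{cor:PhiA-growth}: if such a $\tau_\ast$ occurs in $[t_0,t)$ then $\Phi_A(\sigma(\tau_\ast),t_0)=(I+\mu(\tau_\ast)A(\tau_\ast))\Phi_A(\tau_\ast,t_0)=0$, so $\norm{\Phi_A(t,t_0)}=0\le e^{-\varepsilon(t-t_0)}$ directly (and also $e_m(t,t_0)=0$ by the convention of Definition~\ref{def:exponential}), making the bound trivial. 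Combining the two cases, $\norm{\Phi_A(t,t_0)}\le e_m(t,t_0)\le e^{-\varepsilon(t-t_0)}$ for all $t_0\in\T$, $t\in\T_{t_0}$, which is uniform exponential stability, and the corollary follows from (ii). I would also remark that the lower bound $-2\overline{\mu}^{-1}+\varepsilon$ is there only to make the admissible range of $m(A(t),t)$ explicit: since $m(A,\cdot)$ is decreasing and $1+\overline{\mu}\,m(A(t),\overline{\mu})=\norm{I+\overline{\mu}A(t)}\ge0$, one automatically has $m(A(t),t)\ge m(A(t),\overline{\mu})\ge-\overline{\mu}^{-1}>-2\overline{\mu}^{-1}+\varepsilon$.
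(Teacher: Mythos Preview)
Your proof is correct and follows the same line as the paper's: items (i) and (ii) are read off from the bound $\norm{\Phi_A(t,t_0)}\le e_m(t,t_0)$ of Lemma~\ref{lem:coppel}/Corollary~\ref{cor:PhiA-growth}, and the concrete condition is handled by a direct estimate on the cylinder transformation showing decay at rate at least $\varepsilon$ --- the paper merely asserts this last step as ``direct computations'', whereas you spell it out via $\mathrm{Log}(1+x)\le x$ and also treat the degenerate instant and add the (correct) observation that the lower bound $-2\overline{\mu}^{-1}+\varepsilon$ is in fact automatic.
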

\begin{proof}
  The required estimates for (i) and (ii) are an immediate consequence of
    Lemma~\ref{lem:coppel}. For the final statement, direct computations show that the condition guarantees that,
    at each time instant, the solution $e_m(t,t_0)$ decays with rate at
    least $\varepsilon$. 
\end{proof}

\textcolor{black}{
For the inhomogeneous system \eqref{eqn:linear_sys} we make use of the following:} 
\textcolor{black}{\begin{definition}
Let $t_0\in\T$ and $x(\cdot,t_0,\tilde x_0)$ be a solution of
(\ref{eqn:linear_sys}) with initial condition $x(t_0,t_0,\tilde
x_0)=\tilde x_0$.  We say that $x(\cdot,t_0,\tilde x_0)$ is
globally exponentially stable (at time $t_0$), if there there exist $M,\beta>0$ such that for any solution $x(\cdot,t_0,x_0)$ of \eqref{eqn:linear_sys}
$$
 \abs{x(t,t_0,\tilde x_0) -x(t,t_0,x_0)} \leq M  e^{-\beta(t,t_0)} \abs {x_0-\tilde x_0}, \ \ {\forall t\in \T_{ t_0}.}
$$
\end{definition}}
The following result follows from Lemma
\ref{lem:coppel}. 
\begin{lemma}
Consider the set-up of Lemma~\ref{lem:coppel}. \textcolor{black}{Let
  $x(\cdot,t_0,x_0)$ and $x(\cdot,t_0,x_1)$ be two solutions of  \eqref{eqn:linear_sys}
  with} initial conditions $x_0,x_1$ at time $t_0 \in \T$, respectively. Then, it holds that
\begin{equation}
\label{eq:2}
    \abs{x(t,t_0,x_0) -x(t,t_0,x_1)} \leq \abs {x_0-x_1}   e_m(t,t_0), \ \ {\forall t\in\T_{t_0}.}
\end{equation}
In particular, if the scalar equation $z^\Delta(t) = m(A(t),t) z(t)$ is exponentially stable, then \eqref{eqn:linear_sys} has a
trajectory that is globally exponentially stable \textcolor{black}{(or, equivalently, every trajectory is exponentially stable)}.
\end{lemma}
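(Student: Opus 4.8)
The plan is to reduce the estimate to the homogeneous system and then quote Lemma~\ref{lem:coppel}, after which the ``in particular'' claim becomes essentially a substitution. First I would note that the difference $w(\cdot) := x(\cdot,t_0,x_0) - x(\cdot,t_0,x_1)$ of the two solutions of \eqref{eqn:linear_sys} solves the homogeneous equation $w^\Delta(t) = A(t) w(t)$ with $w(t_0) = x_0 - x_1$: both $x(\cdot,t_0,x_0)$ and $x(\cdot,t_0,x_1)$ satisfy $x^\Delta = A(t)x + g(t)$ with the \emph{same} forcing term $g$, so on subtracting the two identities the term $g(t)$ cancels (equivalently, this is read off the variation-of-constants formula of Lemma~\ref{lem:variation_of_constants}, since the two integral contributions coincide, giving $w(t) = \Phi_A(t,t_0)(x_0-x_1)$). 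Applying Lemma~\ref{lem:coppel} to $w$ with $g\equiv 0$, $\bar g = 0$ and initial value $x_0-x_1$ --- or, directly, Corollary~\ref{cor:PhiA-growth} together with $w(t) = \Phi_A(t,t_0)(x_0-x_1)$ --- yields $\abs{w(t)} \le \abs{x_0-x_1}\, e_m(t,t_0)$ for all $t\in\T_{t_0}$, which is exactly \eqref{eq:2}. Here $m(\cdot) = m(A(\cdot),\cdot)$ is rd-continuous by Lemma~\ref{lem:measure_regressive}\,(ii), so $e_m(\cdot,t_0)$ is the exponential of Definition~\ref{def:exponential} (with the standing convention that $e_m(t,t_0)=0$ on any interval where $m$ fails to be regressive) and coincides with the transition operator of the scalar equation $z^\Delta = m(t)z$.

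For the ``in particular'' statement, recall that exponential stability of the scalar linear equation $z^\Delta(t) = m(t) z(t)$ means, by the definitions in Section~\ref{sec:stability}, that there is a rate $\beta>0$ such that for every $t_0\in\T$ there is a constant $M=M(t_0)\ge 0$ with $e_m(t,t_0) \le M\, e^{-\beta(t-t_0)}$ for all $t\in\T_{t_0}$. Fix $t_0\in\T$ and pick any trajectory $x(\cdot,t_0,\tilde x_0)$. Inserting this bound into \eqref{eq:2} gives, for every other trajectory $x(\cdot,t_0,x_0)$ of \eqref{eqn:linear_sys},
\begin{equation*}
  \abs{x(t,t_0,\tilde x_0) - x(t,t_0,x_0)} \le M\, e^{-\beta(t-t_0)}\,\abs{\tilde x_0 - x_0}, \qquad \forall t\in\T_{t_0},
\end{equation*}
which is precisely the definition of global exponential stability of the trajectory $x(\cdot,t_0,\tilde x_0)$ at time $t_0$. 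Since $\tilde x_0$ was arbitrary this holds for every trajectory, and since the bound \eqref{eq:2} is symmetric in the two initial conditions the global exponential stability of one trajectory already forces the same estimate for all pairs; moreover, for a linear system exponential stability of a trajectory is automatically global, which gives the stated equivalence.

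I do not expect a genuine obstacle here: once the cancellation of the forcing term is observed, the argument is a two-line consequence of the already-established Coppel estimate. The points that need a little care are bookkeeping: allowing $M$ to depend on $t_0$ so that both uniform and non-uniform exponential stability are covered, checking that ``exponential stability of $z^\Delta = m(t)z$'' is read through the transition operator $e_m(\cdot,t_0)$, and noting that in the degenerate case where $I+\mu(t)A(t)$ vanishes somewhere one has $e_m(t,t_0)=0$ from that point on, so \eqref{eq:2} remains trivially valid.
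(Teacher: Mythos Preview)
Your proposal is correct and follows essentially the same approach as the paper: reduce to the homogeneous system by subtracting the two solutions (so that the forcing term cancels), then invoke Lemma~\ref{lem:coppel} with $g\equiv 0$ to obtain \eqref{eq:2}; for the ``in particular'' part, the paper routes the argument through Corollary~\ref{cor:expstab} before combining with \eqref{eq:2}, whereas you substitute the exponential bound on $e_m(t,t_0)$ directly into \eqref{eq:2}, which amounts to the same thing.
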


\begin{proof}
    The first claim follows from linearity and Lemma~\ref{lem:coppel}. 
In particular, note that if $x(\cdot,t_0,x_0)$, $x(\cdot,t_0,x_1)$ are solutions of
\eqref{eqn:linear_sys}, then $x(\cdot,t_0,x_0) -x(\cdot,t_0,x_1)$ is a solution of
\eqref{eq:1} with the initial condition $x(t_0) = x_0 - x_1$. The desired
estimate then follows from Lemma~\ref{lem:coppel}. If the scalar equation for $z$ is exponentially stable, then it follows
from Corollary~\ref{cor:expstab} that \eqref{eqn:linear_sys} is
exponentially stable.  \textcolor{black}{This, together with \eqref{eq:2}, implies the second claim}.
\end{proof}

\subsection{Input-to-state stability} Now, we derive an
input-to-state stability property with explicit bounds 
(i.e. Theorem~\ref{thm:asymptotic}) for system (\ref{eqn:linear_sys}) with associated matrix measure $m$.
 In the sequel, \textcolor{black}{given $t_0\in\T$,} we define 
 \begin{equation*}
     \textcolor{black}{\chi(t,t_0)} := \bar g
\int_{t_0}^te_{m}(t,\sigma(\tau))\Delta\tau
 \end{equation*} and give a sufficient
condition ensuring that, for all \textcolor{black}{$t_0\in\T$,} $ x_0 \in\R^n$ \textcolor{black}{and $x(\cdot) := x(\cdot,t_0,x_0)$, we have} $\textcolor{black}{\limsup_{t\rightarrow+\infty}}\left(\abs{x(t)} -\chi(t,t_0)\right) 
\textcolor{black}{\leq 0}$. To this end, we introduce the following notation. \textcolor{black}{We continue to consider $t_0\in \T$ fixed.} First, we denote by
$\T_d(t)\subset \T$ the subset of right-dense points of $\T$ in the
interval $[t_0,t)$. Analogously, $\T_s(t)\subset\T$ is the subset of
right-scattered points in $\T\cap [t_0,t)$. \textcolor{black}{
Clearly,
$\T_d(t)\cap \T_s(t) =\emptyset$, while $\T_d(t)\cup \T_s(t)= \T\cap
[t_0,t)$. } 

\textcolor{black}{As we want to consider integration over the two sets $\T_d(t), \T_s(t)$, we will use the Lebesgue integral from now on, see \cite{guseinov2003integration,eckhardt2012connection}. The Lebesgue measure corresponding to the time scale $\T$ is denoted by $\lambda_\T$.
In particular, for a right scattered point $t\in \T$ we have $\lambda_\T(\{t\}) = \mu(t)$, \cite[Theorem 5.2]{guseinov2003integration}. The set $\T_s(t)$ is (at most) countable (because for a
right-scattered point $\tau$ there is a rational number in
$(\tau,\sigma(\tau))$. It follows from $\sigma$-additivity of the Lebesgue measure that any bounded function $f:\T\to\K$ is
Lebesgue $\Delta$-integrable on
$\T_s(t)$ and 
we have 
\begin{equation}
\label{eqn:integration-dicretepart}
    \int_{\T_s(t)} f(\tau) \Delta \tau = \sum_{\tau \in \T_s(t)}
    (\sigma(\tau)-\tau)f(\tau).   
\end{equation}}
\textcolor{black}{As a consequence, $\T_d(t) = \T_{[t_0,t]}\setminus \T_s(t)$ is Lebesgue $\Delta$-measurable and if $f$ is Lebesgue $\Delta$-integrable on $\T_{[t_0,t)}$, then
\begin{equation*}
 \int_{t_0}^tf(\tau)\Delta\tau = \int_{\T_d(t)}f(\tau)\Delta\tau + \int_{\T_s(t)}f(\tau)\Delta\tau.
\end{equation*}}

We stress that the previous equality holds if we interpret the integration
with respect to time scale $\T$, so that both integrals are
defined as integration over subsets of $\T$. 
We are now ready to give the following result.
\begin{theorem}\label{thm:asymptotic}
Consider system (\ref{eqn:linear_sys}) and let the assumptions of
Lemma \ref{lem:coppel} hold.
\textcolor{black}{Let $t_0\in\T$, and
}
assume that there exists two
constants $c_r,c_d \in \R$ such that:
\begin{equation}\label{eqn:cond-matrix-measure}
\begin{split}
m\left(A(t),\mu(t)\right) & \le \left\{\begin{array}{*{20}c}
c_d, & \textcolor{black}{\text{if}} \ \ t\in \T_d(t),\\
\textcolor{black}{c_s}, & \textcolor{black}{\text{if}} \ \ t\in \textcolor{black}{\T_s(t)}.
\end{array}\right. \\
\end{split}
\end{equation}
If
\begin{equation}
\textcolor{black}{%
\lim_{t\rightarrow+\infty}
\exp\bigl(c_d\lambda_\T
(\T_d(t)) +
c_s \lambda_\T(\T_s(t))
\bigr)= 0
}
\label{eq:thmasymptotics-convcond}
\end{equation}
then
\begin{equation*}
\textcolor{black}{\limsup_{t\rightarrow+\infty}\left(\abs{x(t,t_0,x_0)} -\chi(t,t_0)\right) \leq 0, \quad \forall x_0\in\K^n.}    
\end{equation*}
\end{theorem}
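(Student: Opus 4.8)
The plan is to reduce everything to showing that the scalar time-scale exponential $e_m(\cdot,t_0)$ tends to $0$ at infinity, and then to estimate this exponential by splitting its defining integral over the right-dense and right-scattered parts of $\T_{t_0}$. Fix $t_0\in\T$ and $x_0\in\K^n$, and write $x(\cdot):=x(\cdot,t_0,x_0)$ and $m(t):=m(A(t),\mu(t))$ as in Lemma~\ref{lem:coppel}. That lemma yields
\begin{equation*}
\abs{x(t)}-\chi(t,t_0)\le\abs{x_0}\,e_m(t,t_0),\qquad t\in\T_{t_0}.
\end{equation*}
Since $e_m(t,t_0)\ge0$ by Definition~\ref{def:exponential}, it therefore suffices to prove $e_m(t,t_0)\to0$ as $t\to+\infty$; the conclusion then follows because $\limsup_{t\to\infty}\bigl(\abs{x(t)}-\chi(t,t_0)\bigr)\le\abs{x_0}\limsup_{t\to\infty}e_m(t,t_0)=0$.

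To establish $e_m(t,t_0)\to0$ I would first dispose of the degenerate case. By Lemma~\ref{lem:measure_regressive}, $m(A(\cdot),\cdot)\in\Reg^+(\T_{t_0},\R)$ if and only if $I+\mu(\tau)A(\tau)\ne0$ for all $\tau\in\T_{t_0}$; if this fails at some $\tau^*\ge t_0$, then $m$ is not regressive on $\T\cap[t_0,t]$ for every $t\ge\tau^*$, so $e_m(t,t_0)=0$ for such $t$ and there is nothing left to prove. Otherwise $m(A(\cdot),\cdot)\in\Reg^+(\T_{t_0},\R)$, the integrand $\tau\mapsto\xi_{\mu(\tau)}(m(\tau))$ is rd-continuous hence Lebesgue $\Delta$-integrable on bounded intervals, and $e_m(t,t_0)=\exp\bigl(\int_{t_0}^t\xi_{\mu(\tau)}(m(\tau))\,\Delta\tau\bigr)>0$ (see the remark after Lemma~\ref{lem:measure_regressive}). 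Using the splitting $\T\cap[t_0,t)=\T_d(t)\cup\T_s(t)$ recalled before the theorem, I would write the exponent as $\int_{\T_d(t)}\xi_{\mu(\tau)}(m(\tau))\,\Delta\tau+\int_{\T_s(t)}\xi_{\mu(\tau)}(m(\tau))\,\Delta\tau$. On $\T_d(t)$ one has $\mu(\tau)=0$, so $\xi_{\mu(\tau)}(m(\tau))=m(A(\tau),0)\le c_d$ by \eqref{eqn:cond-matrix-measure}, giving $\int_{\T_d(t)}\xi_{\mu(\tau)}(m(\tau))\,\Delta\tau\le c_d\,\lambda_\T(\T_d(t))$. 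On $\T_s(t)$ one has $\xi_{\mu(\tau)}(m(\tau))=\mu(\tau)^{-1}\mathrm{Log}(1+\mu(\tau)m(\tau))$, and by \eqref{eqn:integration-dicretepart} the corresponding integral equals $\sum_{\tau\in\T_s(t)}\mathrm{Log}(1+\mu(\tau)m(\tau))$.

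The crux, and the step that needs care, is bounding each of these summands. Here $1+\mu(\tau)m(\tau)=\norm{I+\mu(\tau)A(\tau)}>0$ by \eqref{eqn:matrix_measure} together with regressivity, so $\mathrm{Log}$ reduces to the ordinary real logarithm; from $m(\tau)\le c_s$ we get $0<1+\mu(\tau)m(\tau)\le1+\mu(\tau)c_s$, in particular $\mu(\tau)c_s>-1$, and monotonicity of $\ln$ with $\ln(1+s)\le s$ for $s>-1$ give $\mathrm{Log}(1+\mu(\tau)m(\tau))\le\mu(\tau)c_s$. Summing and using $\lambda_\T(\T_s(t))=\sum_{\tau\in\T_s(t)}\mu(\tau)$ yields $\int_{\T_s(t)}\xi_{\mu(\tau)}(m(\tau))\,\Delta\tau\le c_s\,\lambda_\T(\T_s(t))$. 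Combining both bounds,
\begin{equation*}
e_m(t,t_0)\le\exp\bigl(c_d\,\lambda_\T(\T_d(t))+c_s\,\lambda_\T(\T_s(t))\bigr),
\end{equation*}
and the right-hand side tends to $0$ as $t\to+\infty$ by hypothesis \eqref{eq:thmasymptotics-convcond}; since $e_m(t,t_0)\ge0$ this forces $e_m(t,t_0)\to0$ and closes the argument. The main obstacle is precisely the right-scattered estimate: one must be careful that $\mathrm{Log}$ collapses to the real logarithm — this is where regressivity and the identity $1+\mu(\tau)m(\tau)=\norm{I+\mu(\tau)A(\tau)}$ are used — before invoking $\ln(1+s)\le s$, and then transfer the resulting inequality through the discrete integration formula \eqref{eqn:integration-dicretepart} and the identification $\lambda_\T(\{\tau\})=\mu(\tau)$; everything else is routine bookkeeping.
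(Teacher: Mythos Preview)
Your proof is correct and follows essentially the same route as the paper: apply Lemma~\ref{lem:coppel} to reduce to showing $e_m(t,t_0)\to0$, dispose of the non-regressive case via Lemma~\ref{lem:measure_regressive}\,(i), and in the regressive case split the exponent over $\T_d(t)$ and $\T_s(t)$, bounding the dense part directly by $c_d\,\lambda_\T(\T_d(t))$ and the scattered part using \eqref{eqn:integration-dicretepart}. The only cosmetic difference is that where the paper passes to the product $\prod_{\tau\in\T_s(t)}(1+\mu(\tau)c_s)$ and then invokes $1+s\le e^s$, you stay additive and invoke the equivalent $\ln(1+s)\le s$; your justification that $1+\mu(\tau)c_s>0$ (needed for this step) via $0<1+\mu(\tau)m(\tau)\le 1+\mu(\tau)c_s$ is in fact more explicit than the paper's.
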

%

\begin{remark}
\textcolor{black}{We note that a sufficient condition for \eqref{eq:thmasymptotics-convcond} is that there exists an $a>0$ such that
\begin{equation*}
    \limsup_{t\to \infty} 
    c_d\lambda_\T
(\T_d(t)) +
c_s \lambda_\T(\T_s(t)) \leq -a.
\end{equation*}
This condition, as well as \eqref{eq:thmasymptotics-convcond} allow a trade off between the contractive properties of the continuous and discrete dynamics of a given system. It is sufficient that one of the constants $c_d,c_s$ is negative, provided the respective dynamics occur with sufficient frequency (as measured by the Lebesgue measure of $\T_d(t)$, resp. $\T_s(t)$).}
\end{remark}

\begin{proof} (of Theorem~\ref{thm:asymptotic})
\textcolor{black}{Fix $x_0\in\K^n$ and abbreviate $x(\cdot):= x(\cdot,t_0,x_0)$.}
Since the assumptions of Lemma \ref{lem:coppel} are satisfied, we have from \eqref{eqn:coppel} that
\begin{equation}
\label{eq:proof1-thm:asymptotic}
\abs{x(t)} - \bar g \int_{t_0}^te_{m}(t,\sigma(\tau))\Delta\tau \le  \abs{x_0}e_{m}(t,t_0), \quad t\in \T_{t_0}.
\end{equation}
\textcolor{black}{We first assume that $m(\cdot) := m(A(\cdot),\mu(\cdot))\in\mathcal{R}^+(\T_{t_0},\R)$. Then} 
\begin{equation*}
 \abs{x_0}e_{m}(t,t_0) =  \abs{x_0}\exp\left(\int_{\T_d(t)}m(\tau)\Delta\tau+\right.\left. \int_{\T_s(t)}
 \frac{\mathrm{Log}\left(1+\mu(\tau)m(\tau)\right)}{\mu(\tau)}\Delta\tau\right),
\end{equation*}
\textcolor{black}{where we have used Definition~\ref{def:exponential}. Using \eqref{eqn:integration-dicretepart} we obtain for all $t\in\T_{t_0}$
\begin{equation*}\label{eqn:exp_ineq_1}
\begin{split}
& \abs{x_0}\exp\left(\int_{\T_d(t)}m(A(\tau),0)\Delta\tau+ \right.\left. \int_{\T_s(t)}\frac{\mathrm{Log}\left(1+\mu(\tau)m(A(\tau),\mu(\tau))\right)}{\mu(\tau)}\Delta\tau\right) \\
& =\abs{x_0}\exp \left(\int_{\T_d(t)}m(A(\tau),0)\Delta\tau\right)
\exp\left(
\sum_{\tau\in\T_s(t)}
\mathrm{Log}\left(1+\mu(\tau)m(A(\tau),\mu(\tau))\right)
\right) \\
&\leq \abs{x_0}\exp\left(c_d\int_{\T_d(t)}\Delta\tau\right)
\prod_{\tau\in\T_s(t)}
\left(1+\mu(\tau)c_s\right) \\
& \leq \abs{x_0} \exp\left(c_d\lambda_\T(\T_d(t))\right)
\exp\left(c_s\lambda_\T(\T_s(t))\right).
\end{split}
\end{equation*}
This shows the assertion for the case $m\in\mathcal{R}^+$. Alternatively, by Lemma~\ref{lem:measure_regressive}, there exists a $t_1\in\T_{t_0}$ such that $I+\mu(t_1)A(t_1) = 0$. It follows that for all $t>t_1,t\in\T$ we have $e_m(t,t_0)= 0$. The claim then follows immediately from \eqref{eq:proof1-thm:asymptotic}.}
%
\end{proof}

\textcolor{black}{Theorem \ref{thm:asymptotic} guarantees that, for any initial condition, the norm of the solutions of (\ref{eqn:linear_sys}) is asymptotically bounded by  $\chi(t,t_0)$.
} This motivates the next result,  \textcolor{black}{giving conditions for which we have a uniform upper bound, 
monotonically converging to $\chi(t,t_0)$.}

\begin{corollary}
\label{Cor:firstcor}
\textcolor{black}{Under the assumptions of Theorem~\ref{thm:asymptotic}, assume further that
$c_d\le -\bar{c}_d^2$, $\bar{c}_d\ne0$ and $c_s\le-\bar{c}_s^2$, $\bar{c}_s\ne0$ in (\ref{eqn:cond-matrix-measure}). 
$$
\abs{x(t)}\le\chi(t,t_0)+ \abs{x_0} \exp\left(-\bar{c}_d^2\lambda_\T(\T_d(t))\right)
\exp\left(-\bar{c}_s^2\lambda_\T(\T_s(t))\right).
$$
}
\end{corollary}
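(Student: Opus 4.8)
The plan is to re-run the final steps of the proof of Theorem~\ref{thm:asymptotic}, this time keeping track of the \emph{sign} of the exponents that appear once the sharper bounds $c_d \le -\bar c_d^2$ and $c_s \le -\bar c_s^2$ are in force. By the Coppel estimate \eqref{eqn:coppel}, for every $t\in\T_{t_0}$,
\begin{equation*}
  \abs{x(t)} - \chi(t,t_0) \;=\; \abs{x(t)} - \bar g\int_{t_0}^t e_m(t,\sigma(\tau))\,\Delta\tau \;\le\; \abs{x_0}\, e_m(t,t_0),
\end{equation*}
so it suffices to show $e_m(t,t_0)\le \exp\bigl(-\bar c_d^2\lambda_\T(\T_d(t))\bigr)\exp\bigl(-\bar c_s^2\lambda_\T(\T_s(t))\bigr)$.

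First I would treat the case $m(\cdot):=m(A(\cdot),\mu(\cdot))\in\mathcal{R}^+(\T_{t_0},\R)$. Exactly as in the proof of Theorem~\ref{thm:asymptotic}, splitting the integral in the exponent of $e_m(t,t_0)$ over the right-dense part $\T_d(t)$ and the (at most countable) right-scattered part $\T_s(t)$ and using \eqref{eqn:integration-dicretepart} gives
\begin{equation*}
  e_m(t,t_0) \;=\; \exp\!\Bigl(\int_{\T_d(t)} m(A(\tau),0)\,\Delta\tau\Bigr)\prod_{\tau\in\T_s(t)}\bigl(1+\mu(\tau) m(A(\tau),\mu(\tau))\bigr).
\end{equation*}
On $\T_d(t)$ I invoke \eqref{eqn:cond-matrix-measure} and $c_d\le -\bar c_d^2$, and on $\T_s(t)$ I use the chain $0\le 1+\mu(\tau)m(A(\tau),\mu(\tau))=\norm{I+\mu(\tau)A(\tau)}\le 1+\mu(\tau)c_s\le 1-\mu(\tau)\bar c_s^2$; note this chain simultaneously forces $1-\mu(\tau)\bar c_s^2\ge0$, which is precisely what allows me to apply $1+s\le e^s$ to a genuinely nonnegative factor. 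Since $\lambda_\T(\{\tau\})=\mu(\tau)$ at right-scattered points, multiplying out and using $\sigma$-additivity yields $e_m(t,t_0)\le \exp(-\bar c_d^2\lambda_\T(\T_d(t)))\exp(-\bar c_s^2\lambda_\T(\T_s(t)))$, which with the previous display is the claim. The non-regressive case is handled as in Theorem~\ref{thm:asymptotic}: by Lemma~\ref{lem:measure_regressive}\,(i) there is a $t_1\in\T_{t_0}$ with $I+\mu(t_1)A(t_1)=0$, so $e_m(t,t_0)=0$ for $t>t_1$, while the estimate above applies verbatim on $\T_{[t_0,t_1]}$; as the correction term on the right-hand side is nonnegative, $\abs{x(t)}\le\chi(t,t_0)$ closes the region $t>t_1$.

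To back up the word ``monotonically'' in the text preceding the corollary, I would also observe that $t\mapsto\lambda_\T(\T_d(t))$ and $t\mapsto\lambda_\T(\T_s(t))$ are nondecreasing (the underlying sets grow with $t$), so the correction term $\abs{x_0}\exp(-\bar c_d^2\lambda_\T(\T_d(t)))\exp(-\bar c_s^2\lambda_\T(\T_s(t)))$ is nonincreasing, and since $\lambda_\T(\T_d(t))+\lambda_\T(\T_s(t))=\lambda_\T(\T\cap[t_0,t))=t-t_0\to\infty$ it is dominated by $\abs{x_0}e^{-\min(\bar c_d^2,\bar c_s^2)(t-t_0)}$, hence decreases to $0$; in particular the hypotheses also imply \eqref{eq:thmasymptotics-convcond}, so invoking Theorem~\ref{thm:asymptotic} is legitimate. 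I do not expect any real obstacle: the corollary is essentially a sharpening of the last two lines of the proof of Theorem~\ref{thm:asymptotic}, and the only point requiring a moment's care is the elementary check that the new hypotheses are compatible with $\norm{I+\mu(\tau)A(\tau)}\ge0$, i.e.\ that $1-\mu(\tau)\bar c_s^2\ge0$ on $\T_s(t)$, which the displayed chain extracts for free.
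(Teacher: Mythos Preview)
Your proposal is correct and matches the paper's approach: the paper does not give a separate proof of the corollary, treating it as an immediate consequence of the final chain of inequalities in the proof of Theorem~\ref{thm:asymptotic} with $c_d,c_s$ replaced by $-\bar c_d^2,-\bar c_s^2$. Your added justification that $1-\mu(\tau)\bar c_s^2\ge 0$ follows automatically from positive regressivity, and your remarks on monotonicity, simply make explicit what the paper leaves to the reader.
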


\textcolor{black}{Note in particular, that the above result ensures that convergence  to the upper bound $\chi(t,t_0)$ is monotone.} Moreover, when $g(t) = 0$ for all $t \in \T$, all solutions converge to $0$ and
the zero \textcolor{black}{solution} is uniformly exponentially stable.

\section{Two simple examples}\label{sec:examples}
We now start to illustrate some key features of the above results by means of two representative examples. 
  \subsection*{Example 1}
We now make use of the concept of matrix measure on time scales to study stability of the linear time-varying system
$$
x^{\Delta}(t) = A(t) x(t), \ \ A(t):=\left[\begin{array}{*{20}c}
-2 & 1\\
-1 & -a(t)
\end{array}\right],
$$
where, as in \cite{DACUNHA2005381}, $a(t):= \sin(t)+2$ and hence $A(\cdot)\in\Crd$. In particular, we make use of the matrix measure induced by the Euclidean norm. \textcolor{black}{It is straightforward to see that, if $\mu =0$, the matrix measure is uniformly negative definite, indeed $m(A(t),0) \le -1$, $t\in\R$. If $\mu> 0$, we estimated numerically that $\sigma_{\max}(I+\mu A(t))  <1$ holds for $0<\mu <0.5, t\in\R$. Hence,  $m(A(t),\mu(t))$ is negative for any time scale satisfying $0\le \mu(t)<0.5$. In turn, this means, from Corollary \ref{Cor:firstcor}, that the solutions of the homogeneous system \eqref{eq:1} converge monotonically in norm to the equilibrium position $0$.}    


\subsection*{Example 2}
        Consider the time scale of alternating intervals of length $c>0$ and
    jumps of length $h>0$, given by
    two constants $c,h >0$. We set $a_k = k(c+h)$ and $b_k=a_k + c$. The
    time scale is then given by $\T := \bigcup_{k=0}^\infty [a_k,b_k]$.
Consider the matrix
\begin{equation*}
    A =
    \begin{bmatrix}
        - 5 & 2\\ 2& -2
    \end{bmatrix}.
\end{equation*}
For $t \in [a_k,b_k)$ we have $m(A,t) = -1$ while for $t=b_k$ we have
$$
m(A,t) = (\max \{ \abs {1-h}, \abs {1-6h} \} -1)/ h.
$$
Thus we see that if
$h \in (0,2/7]$, then the linear system $x^\Delta = A x$ is exponentially stable in the origin with rate $-1$.

\section{Nonlinear systems}\label{sec:nonlinear}
We now introduce a sufficient condition for the convergence of nonlinear systems on time scales. This result is then used to study certain epidemic dynamics (Section \ref{sec:epidemic_model}), pinning synchronizability of time scale networks (Section \ref{sec:pinning}) and certain collective opinion formation processes with stubborn agents (Section \ref{sec:opinion}). We consider $n$-dimensional nonlinear smooth dynamical systems of the form
\begin{equation}\label{eqn:nonlin_sys}
x^\Delta = f(t,x), \ \ x(t_0) = x_0\in\mathcal{C}\subseteq\K^n, \ \ t_0\in\T.
\end{equation}
\textcolor{black}{Here $\mathcal{C}$ is an open subset of $\K^n$,
$f:\T\times \mathcal{C}\to \K^n$ is rd-continuous, $f(t,\cdot):\mathcal{C}\to \K^n$ is differentiable for all $t\in\T$. As in \cite{Choi12}, we denote $f_x(t,x):=\frac{\partial f}{\partial x}(t,x)$, $(t,x)\in \T\times \mathcal{C}$. We assume that
$f_x:\T\times \mathcal{C}\rightarrow\K^{n\times n}$, is rd-continuous (see Section \ref{sec:calculus} for the definitions).} We denote by $x(\cdot,t_0,x_0)$ the unique solution of (\ref{eqn:nonlin_sys}) with initial condition $x(t_0)=x_0$, see \cite{Lak96} for explicit conditions on the existence and uniqueness of solutions of (\ref{eqn:nonlin_sys}).  In certain applications  (as in Section  \ref{sec:epidemic_model}) the subset $\mathcal{C}$ is non-open. For a non-open set $\mathcal{C}$, as remarked in \cite{russo2010global}, differentiability of $f$ with respect to $x$ means that the vector field can be extended as a differentiable function to some open set that includes $\mathcal{C}$. The continuity hypotheses hold on this open set. \textcolor{black}{In what follows, we say that the set $\mathcal{C}$ is forward invariant for system \eqref{eqn:nonlin_sys} if, for all $t_0\in\T$ and $x_0\in\mathcal{C}$, $x(t,t_0,x_0)\in\mathcal{C}$, for all $ t\in\T_{t_0}$. In particular, this tacitly implies that the system is forward complete on $\mathcal{C}$, i.e. solutions can always be extended to $\T_{t_0}$.} Given this set-up, we are now ready to state the following result.
\begin{theorem}\label{thm:contraction}
\textcolor{black}{Let $\abs{\cdot}$ be a vector norm on $\K^n$ with associated matrix measure $m(\cdot,\cdot)$.
Let $\mathcal{C}\subseteq\textcolor{black}{\K^n}$ be a convex forward invariant set for system \eqref{eqn:nonlin_sys}. Assume that there exists
a constant $\bar{c}> 0$
such that for all
$\xi\in\mathcal{C}$ and 
all
$t\in\T$
we have $m\left(f_x(t,\xi),\mu(t)\right)\le -\bar{c}^2$. Then, \textcolor{red}{$-\bar{c}^2\in\mathcal{R}^+$} and for all $t_0\in\T$ and $x_0,y_0\in\mathcal{C}$ with solutions $x(t):= x(t,t_0,x_0)$ and $y(t):=y(t,t_0,y_0)$, $t\in\T_{t_0}$, it holds that:}
\begin{equation}
\abs{x(t)-y(t)}\le 
\abs{x(t_0)-y(t_0)}
e_{-\bar{c}^2}(t,t_0),
 \quad
{\forall t\in\T_{t_0}.}
\end{equation}
\end{theorem}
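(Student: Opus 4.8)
The plan is to mimic the classical contraction-theory argument: bound the evolution of the displacement $x(t)-y(t)$ by integrating the Jacobian along the straight-line segment joining $x(t)$ and $y(t)$, which is legitimate since $\mathcal{C}$ is convex and forward invariant. First I would observe that $-\bar c^2\in\Reg^+$: since $m(f_x(t,\xi),\mu(t))\le -\bar c^2$ for all $t$ and $\xi\in\mathcal{C}$, and the right-hand side of \eqref{eqn:matrix_measure} satisfies $1+\mu(t)m(A,\mu(t))=\norm{I+\mu(t)A}\ge 0$, we get $1+\mu(t)(-\bar c^2)\ge 0$; by Lemma~\ref{lem:measure_regressive}\,(iii) (applied with $c=-\bar c^2$, after noting that either $c>m(f_x(t,\xi),\mu(t))$ in which case we conclude directly, or we handle the equality case by a limiting/approximation argument), $-\bar c^2\in\Reg^+$. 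This is needed so that $e_{-\bar c^2}(t,t_0)$ is well-defined and positive.

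Next I would set up the comparison. Fix $t_0\in\T$ and $x_0,y_0\in\mathcal{C}$, write $z(t):=x(t)-y(t)$, and compute $z^\Delta(t)=f(t,x(t))-f(t,y(t))$. Using the fundamental theorem of calculus in the $x$-variable (ordinary calculus, not time-scale calculus, since $f(t,\cdot)$ is differentiable on the convex set $\mathcal{C}$), write
\begin{equation*}
z^\Delta(t)=\left(\int_0^1 f_x\bigl(t,\,y(t)+\eta\,z(t)\bigr)\,d\eta\right)z(t)=:M(t)z(t),
\end{equation*}
where forward invariance and convexity of $\mathcal{C}$ guarantee that $y(t)+\eta z(t)\in\mathcal{C}$ for all $\eta\in[0,1]$. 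By Lemma~\ref{lem:integral} (with the fixed graininess value $\mu(t)$) together with the hypothesis on $f_x$, we obtain
\begin{equation*}
m(M(t),\mu(t))\le \int_0^1 m\bigl(f_x(t,y(t)+\eta z(t)),\mu(t)\bigr)\,d\eta\le -\bar c^2,\quad t\in\T.
\end{equation*}
So $z(\cdot)$ solves a linear time-varying system $z^\Delta=M(t)z$ whose pointwise matrix measure is bounded above by $-\bar c^2$.

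Then I would estimate $\abs{z(t)}$ by a one-sided-derivative comparison, exactly as in Corollary~\ref{cor:PhiA-growth} and Lemma~\ref{lem:coppel}. Concretely, at right-scattered $t$ one has $z(\sigma(t))=(I+\mu(t)M(t))z(t)$, hence $\abs{z(\sigma(t))}\le\norm{I+\mu(t)M(t)}\abs{z(t)}=(1+\mu(t)m(M(t),\mu(t)))\abs{z(t)}\le(1+\mu(t)(-\bar c^2))\abs{z(t)}$; at right-dense $t$ one estimates $\frac{\Delta^+}{ds}\abs{z(s)}\big|_{s=t}\le m(M(t),0)\abs{z(t)}\le -\bar c^2\abs{z(t)}$ by the argument in \eqref{eq:estimate-ddtnormeAt}. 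Applying the time-scale comparison result \cite[Theorem~6.1]{Boh_03} (using that $-\bar c^2\in\Reg^+$, with the usual zero-crossing case trivial since $\bar c^2>0$ keeps $1+\mu(-\bar c^2)\ge0$ and strict whenever needed) yields $\abs{z(t)}\le\abs{z(t_0)}e_{-\bar c^2}(t,t_0)$ for all $t\in\T_{t_0}$, which is the claim.

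The main obstacle I anticipate is the rigorous justification of the integral representation $z^\Delta(t)=M(t)z(t)$ and the rd-continuity of $t\mapsto M(t)$ needed to invoke the comparison theorem and to make $m(M(\cdot),\cdot)$ rd-continuous (via Lemma~\ref{lem:measure_regressive}\,(ii)); this hinges on the regularity assumptions on $f$ and $f_x$ and on the fact that the delta-derivative commutes appropriately with the parameter integral — a point that requires some care because time-scale differentiation and ordinary integration in a separate variable must be interchanged, and one must verify that the solutions $x(t),y(t)$ are themselves rd-continuous (indeed delta-differentiable) so that $M(\cdot)$ inherits rd-continuity. A secondary technical nuisance is the borderline regressivity case, i.e. allowing $1+\mu(t)(-\bar c^2)=0$ at some $t$; there $e_{-\bar c^2}(t,t_0)$ vanishes beyond that point, and one checks the bound still holds since $\abs{z(\sigma(t))}=0$ there as well.
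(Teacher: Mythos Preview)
Your argument is correct, but it takes a genuinely different route from the paper. The paper proceeds via the \emph{variational equation}: it parametrizes the segment of initial conditions $\gamma(r)=x_0+r(y_0-x_0)$, invokes differentiability of the solution map with respect to initial data (citing \cite[Lemma~2.3]{Choi12}) to obtain $w(t,r):=\partial_r x(t,t_0,\gamma(r))$ satisfying $w^\Delta = f_x(t,x(t,t_0,\gamma(r)))\,w$, applies Lemma~\ref{lem:coppel} to each $w(\cdot,r)$, and then integrates $\int_0^1|w(t,r)|\,dr$ to bound $|x(t)-y(t)|$. You instead work directly with the difference $z=x-y$, write $z^\Delta=M(t)z$ via the mean-value integral $M(t)=\int_0^1 f_x(t,y(t)+\eta z(t))\,d\eta$, and bound $m(M(t),\mu(t))$ using Lemma~\ref{lem:integral} before invoking the comparison argument of Corollary~\ref{cor:PhiA-growth}. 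The trade-off is clean: the paper avoids Lemma~\ref{lem:integral} (each $A_r(t)$ is a single Jacobian, so the hypothesis applies pointwise) but must import the nontrivial result on differentiability of the time-scale flow in the initial condition; your approach sidesteps that dependence entirely at the modest cost of Jensen's inequality for the matrix measure, and is therefore somewhat more self-contained. The technical concerns you flag (rd-continuity of $M$, the borderline case $1+\mu(t)(-\bar c^2)=0$) are real but minor, and are handled by the same machinery the paper already develops for Corollary~\ref{cor:PhiA-growth}.
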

\begin{proof}Inspired by the strategy proposed in \cite{russo2010global} to study continuous-time dynamical systems, we make use of Coppel's inequality on time scales (Lemma \ref{lem:coppel}). \textcolor{black}{Fix $t_0\in\T$,} pick any two points $x_0,y_0\in\mathcal{C}$ and consider the segment $\gamma:[0,1]\rightarrow\R^n$, \textcolor{black}{$\gamma: r\mapsto x_0 + r (y_0-x_0)$ with $\gamma(0) = x_0$ and $\gamma(1) = y_0$. We let $x(\cdot,t_0,\gamma(r))$ be the solution of (\ref{eqn:nonlin_sys}) with initial condition $x(t_0)=\gamma(r)$ and define
$A_r(t) := f_x(t,x(t,t_0,\gamma(r)))$. $t\in\T_{t_0}$, which is rd-continuous by assumption. By assumption we have 
$m(A_r(t),\mu(t))\leq -\bar{c}^2$ for $t\in\T_{t_0}$ and as $t_0\in\T$ was arbitrary we obtain $-\bar{c}^2\in\mathcal{R}^+(\T,\R)$ from Lemma~\ref{lem:measure_regressive}\,(iii).}

\textcolor{black}{By Lemma 2.3 of \cite{Choi12} the solution map of \eqref{eqn:nonlin_sys} is differentiable with respect to the initial condition $x_0$ and we have
\begin{equation*}
  \frac{\partial}{\partial x_0}_{\vert x_0=\gamma(r)} x(t,t_0,\gamma(r)) = \Phi_r(t,t_0), \quad t\in \T_{t_0}, 
\end{equation*}
where $\Phi_r$ is the transition operator corresponding to $A_r(\cdot)$, see \eqref{eqn:basic_sys}. It follows from the chain rule that
\begin{equation*}
   w(t,r) := \frac{\partial}{\partial r} x(\cdot,t_0,\gamma(r)) = \Phi_r(t,t_0) \frac{\mathrm{d}}{\mathrm{d} r} \gamma(r) , \quad t\in \T_{t_0}.
\end{equation*}
Thus for all $r\in [0,1]$ we have
\begin{equation}
   w^\Delta(t,r) = \Phi^\Delta_r(t,t_0) \frac{\mathrm{d}}{\mathrm{d} r} \gamma(r) 
   = A_r(t) w(t,r), \quad t\in\T_{t_0}.
\end{equation}
}
\textcolor{black}{Denote $m_r(t):= m(A_r(t),\mu(t)), t\in\T_{t_0}$. Lemma \ref{lem:coppel} yields
\begin{equation*}
\abs{w(t,r)} \le \abs{w(t_0,r)}e_{m_r}(t,t_0)
\end{equation*}
for all $t\in\T_{t_0}$, $r\in[0,1]$, which by hypothesis leads to:
\begin{equation}\label{eqn:upper_bound}
\begin{split}
\abs{w(t,r)} \le \abs{w(t_0,r)}
e_{-\bar{c}^2}(t,t_0), \quad t\in\T_{t_0}, r\in[0,1].
\end{split}
\end{equation}
Now the Fundamental Theorem of Calculus implies that
$\abs{x(t,t_0,y_0) - x(t,t_0,x_0)}  \le \int_0^1\abs{w(t,s)}ds$, for all $t\in\T_{t_0}$, 
and hence (\ref{eqn:upper_bound}) yields
\begin{align*}
\abs{x(t,t_0,y_0) - x(t,t_0,x_0)} 
&\le \int_0^1 \abs{w(t_0,s)}
e_{-\bar{c}^2}(t,t_0)
ds\\
&= \abs{y_0-x_0}
e_{-\bar{c}^2}(t,t_0)
,\quad \forall t\in\T_{t_0},
\end{align*}
thus proving the result.}
\end{proof}
It is easy to see that when $\T:=\R$, then Theorem \ref{thm:contraction} yields the classic conditions for contractivity of ODEs given in \cite{russo2010global}. Next, we show how Theorem \ref{thm:contraction} can be used to study epidemic dynamics on time scales.
\section{An epidemic model on time scales} \label{sec:epidemic_model}
We now consider an epidemic model on time scales originally introduced in \cite{doi:10.1080/10236198.2018.1479400} to generalize the classic deterministic SIQR model with standard incidence in continuous time (see e.g. \cite{CAO2019121180} and references therein). The model has four {\em compartments}: the first compartment corresponds to uninfected individuals that are {\em susceptible} to the disease, the second compartment consists of individuals that are {\em infected} and not yet isolated, the third and fourth compartments correspond \textcolor{black}{to} the {\em isolated} (i.e. people in quarantine) and the {\em recovered} (and hence immune) individuals. In the model, the infected compartment includes not only individuals that have been tested and found positive but also individuals that have no symptoms, as well as individuals that have symptoms but have not been tested.  The time scale SIQR dynamics devised in \cite{doi:10.1080/10236198.2018.1479400} and considered in this section is
\begin{equation}\label{eqn:epidemic_model}
\begin{split}
S^\Delta & = \Lambda (t) - \beta(t)SI - d(t)S\\
I^\Delta & =  \beta (t) SI - \left[ \gamma(t) + \zeta(t) + d(t) + \alpha_1(t)\right]I  \\
Q^\Delta & = \zeta (t)I - \left[d(t) + \alpha_2(t) + \varepsilon(t)\right]Q \\
R^\Delta & = \gamma(t)I + \varepsilon(t)Q - d(t)R,
\end{split}
\end{equation}
where the state variables $S$, $I$, $Q$ and $R$ represent the size of each compartment. In the model: (i) $\Lambda(t)$ is the recruitment rate of the susceptible compartment and $d(t)$ is the natural death rate of the population individuals; (ii) $\alpha_1(t)$ is the disease-related death rate of the infected compartment and $\alpha_2(t)$ is the disease-related death rate of the isolated compartment; (iii) $\beta(t)$ is the effective contact rate between the susceptible and infected compartments; (iv) $\gamma(t)$ is the natural recovery rate of the infected compartment class, $\varepsilon (t)$ is the recovery rate from the quarantine and $\zeta(t)$ is the rate of removal from the infective compartment. As in \cite{doi:10.1080/10236198.2018.1479400} all the above time-dependent functions are {\em rd}-continuous, non-negative and bounded. Moreover, the following assumptions are made in \cite{doi:10.1080/10236198.2018.1479400} and are also used here: (i) $\sup_{t\in\T}\mu(t)\left[\gamma(t) + \zeta (t) + d(t) + \alpha_1(t)\right]<1$; (ii) $\sup_{t\in\T}\mu(t)\left[d(t) + \alpha_2(t) + \varepsilon(t)\right]<1$; (iii) $d(t)\ge d_{\min}>0$ and $\Lambda(t)\ge\Lambda_{\min}>0$, $\forall t\in\T$. These conditions guarantee  the existence and uniqueness of the solutions of (\ref{eqn:epidemic_model}) together with forward invariance of the positive orthant (i.e. solutions with non-negative initial conditions will be non-negative for all $t\ge t_0$, $t,t_0\in\T$).

We now show how Theorem \ref{thm:contraction} can be used to give sufficient conditions guaranteeing that all the solutions, $x(t):=[S(t), I(t), Q(t), R(t)]^T$ of (\ref{eqn:epidemic_model}) converge towards the disease-free solution, i.e. the solution $x_d(t):=[\Lambda(t)/d(t),0,0,0]^T$. The first step to apply the result is to compute Jacobian \textcolor{black}{of the right hand side of} (\ref{eqn:epidemic_model}) \textcolor{black}{with respect to $x$}:
$$
f_x(t,x) = \left[\begin{array}{*{20}c}
-d(t) -\beta(t)I & -\beta(t)S & 0 & 0\\
\beta(t)I & \beta(t)S-a_1(t) & 0 & 0\\
0 & \zeta(t) & -a_2(t) & 0\\
0 & \gamma(t) & \varepsilon(t) & -d(t)
\end{array}\right],
$$
where $a_1(t) := \gamma(t) + \zeta(t) + d(t) + \alpha_1(t)$, $a_2(t) := d(t) + \alpha_2(t) + \varepsilon(t)$,  and where we omitted the dependence of the state variables on the time variable. To study the system, we pick the matrix measure $m_{P,1}(\cdot,\cdot)$, i.e. the matrix measure induced by  $x\rightarrow \abs{Px}_1$. Namely, we pick $P$ as the diagonal matrix having on its main diagonal the positive numbers $p_1,\ldots,p_4$, which will be appropriately chosen later. Following Lemma \ref{lem:properties}, $m_{P,1}(f_x(t,x),t) = m_{1}(Pf_x(t,x)P^{-1},t)$ and a simple calculation yields
$$
Pf_x(t,x)P^{-1} = \left[\begin{array}{*{20}c}
-d(t) -\beta(t)I & -\frac{p_1}{p_2}\beta(t)S & 0 & 0\\
\frac{p_2}{p_1}\beta(t)I & \beta(t)S-a_1(t) & 0 & 0\\
0 & \frac{p_3}{p_2}\zeta(t) & -a_2(t) & 0\\
0 & \frac{p_4}{p_2}\gamma(t) & \frac{p_4}{p_3}\varepsilon(t) & -d(t)
\end{array}\right].
$$
We start with considering points for which $\mu(t)\ne 0$ and we let $\mu_{\min}$ be the minimum of $\mu(t)$ over the set of scattered points (note that $\mu_{\min}>0)$. For these points, {in order to guarantee that $m_1(Pf_x(t,x)P^{-1},t)\le - c_s^2$ for some $c_s\ne 0$,} $\forall x$ in the positive orthant and $\forall t\ge t_0$, the following inequalities must be satisfied {$\forall t\ge t_0$, $t,t_0 \in\T$ and $\forall S, I \ge 0$}:
\begin{subequations}
\begin{equation}\label{eqn:matrix_measure_ineq_1}
\frac{\abs{1-\mu(t)(d(t) + \beta(t)I)}-1}{\mu(t)} + \frac{p_2}{p_1}\beta(t)I \le - c_1^2,
\end{equation}
\begin{equation}\label{eqn:matrix_measure_ineq_2}
\frac{\abs{1+\mu(t)(\beta(t)S-a_1(t))}-1}{\mu(t)} + \frac{p_1}{p_2}\beta(t)S + \frac{p_3}{p_2}\zeta(t) + \frac{p_4}{p_2}\gamma(t)\le - c_2^2,
\end{equation}
\begin{equation}\label{eqn:matrix_measure_ineq_3}
\frac{\abs{1-\mu(t)a_2(t)}-1}{\mu(t)}+\frac{p_4}{p_3}\varepsilon(t) \le -c_3^2,
\end{equation}
\begin{equation}\label{eqn:matrix_measure_ineq_4}
\frac{\abs{1-\mu(t)d(t)}-1}{\mu(t)}\le -c_4^2,
\end{equation}
\end{subequations}
for some $c_i\ne0$, $i=1,\ldots,4$. We start with (\ref{eqn:matrix_measure_ineq_4}) and, since $\mu(t)d(t)<1$, $t\in\T$, the left hand side of this inequality becomes $-d(t)\le -d_{\min}$. Hence (\ref{eqn:matrix_measure_ineq_4}) is always verified. We then consider (\ref{eqn:matrix_measure_ineq_3}) and note that $1-\mu(t)a_2(t)>0$, $t\in\T$. Hence, the left hand side of (\ref{eqn:matrix_measure_ineq_3}) is equal to $-\mu(t)[ d(t) + \alpha_2(t) + \varepsilon(t)]+\frac{p_4}{p_3}\varepsilon(t)$. Moreover, since $\varepsilon(t)$ is bounded and since $d(t)\ge d_{\min}$, then we can always pick $\frac{p_4}{p_3}$ small enough so that the left hand side of (\ref{eqn:matrix_measure_ineq_3}) is negative {for all $t\in\T$}.  In order to find conditions to satisfy the inequalities (\ref{eqn:matrix_measure_ineq_1}) and (\ref{eqn:matrix_measure_ineq_2}) we first show that the solutions (\ref{eqn:epidemic_model}) are bounded, i.e. there exists some $\bar{x}$ such that $x(t)\le\bar{x}$, {$t\ge t_0$, $t,t_0\in\T$}. To this aim, let $C(t):= S(t)+I(t)+Q(t)+R(t)$ and note that, from (\ref{eqn:epidemic_model}), we have $C^\Delta = \Lambda(t) - d(t)C$. Hence, Lemma \ref{lem:coppel} immediately implies that $\abs{C(t)}\le \abs{C(t_0)} + \bar{\Lambda}:= \bar{x}$, where $\bar{\Lambda}:=\sup_{t\in\T}\Lambda(t)<+\infty$. Further, we pick $p_2=p_1$ and the left hand side in (\ref{eqn:matrix_measure_ineq_1}) becomes
$$
\frac{\abs{1-\mu(t)(d(t) + \beta(t)I)}-1}{\mu(t)} + \beta(t)I
$$
Now, we study the above expression in two cases. First, when the term $1-\mu(t)(d(t) + \beta(t)I)$ is non-negative. In this case we have 
    \begin{equation*}
    \begin{split}
    \frac{\abs{1-\mu(t)(d(t) + \beta(t)I)}-1}{\mu(t)} + \beta(t)I & = - d(t) \le -d_{\min}.
    \end{split}
    \end{equation*} 
Then, we study the case when $1-\mu(t)(d(t) + \beta(t)I)$ is negative, yielding
    \begin{equation*}
    \begin{split}
    \frac{\abs{1-\mu(t)(d(t) + \beta(t)I)}-1}{\mu(t)} + \beta(t)I & = \frac{-2+\mu(t)(d(t) + 2\beta(t)I)}{\mu(t)}  \\
    & \le \frac{-2 + \mu(t)(d(t)+2\beta(t)\bar{x})}{\mu(t)}.\\
    \end{split}
    \end{equation*}
Hence, inequality (\ref{eqn:matrix_measure_ineq_1}) is satisfied if $\mu(t) < \frac{2}{d(t)+2\beta(t)\bar{x}}$. The last inequality that needs to be verified is (\ref{eqn:matrix_measure_ineq_2}). In order to do so, first note that $1+\mu(t)(\beta(t)S-a_1(t)) > \mu(t)\beta(t)S(t) \ge 0$ and for the left hand-side of (\ref{eqn:matrix_measure_ineq_2}) this yields (picking $p_1 = p_2 = p_3$)
    \begin{equation*}
    \begin{split}
\frac{\abs{1+\mu(t)(\beta(t)S-a_1(t))}-1}{\mu(t)} + \beta(t)S + \zeta(t) + \frac{p_4}{p_2}\gamma(t) & \le 2\beta(t)\bar{x}-d(t)-\alpha_1(t)\\
&+(-1+\frac{p_4}{p_2})\gamma(t)\\
    \end{split}
    \end{equation*}
In turn, since $\frac{p_4}{p_2}$ can be made arbitrarily small and $\gamma(t)$ is bounded, this implies that (\ref{eqn:matrix_measure_ineq_3}) can be satisfied if $2\beta(t)\bar{x} < d(t)+\alpha_1(t)+\gamma(t)$  ({see Remark \ref{rem:R_0} where this inequality is related to the so-called basic reproduction number \cite{Die_Hee_90} for the epidemics}).

In order to complete our analysis of (\ref{eqn:epidemic_model}) we only need to consider dense points, i.e. points for which $\mu(t)=0$. In particular, we need to show that even in this case the matrix measure induced by the norm $x\rightarrow\abs{Px}_1$ (with $P$ being the same matrix considered above) is uniformly negative definite. This is equivalent to verifying that the following inequalities are simultaneously fulfilled for some non-zero constants $c_{d,i}$, $i=1,\ldots,4$:
\begin{subequations}
\begin{equation}\label{eqn:matrix_measure_ineq_1_dense}
-d(t) - \beta(t)I + \beta(t)I \le - c_{d,1}^2,
\end{equation}
\begin{equation}\label{eqn:matrix_measure_ineq_2_dense}
\beta(t)S-a_1(t) + \beta(t)S + \zeta(t) + \frac{p_4}{p_2}\gamma(t)\le - c_{d,2}^2,
\end{equation}
\begin{equation}\label{eqn:matrix_measure_ineq_3_dense}
-a_2(t)+\frac{p_4}{p_3}\varepsilon(t) \le -c_{d,3}^2,
\end{equation}
\begin{equation}\label{eqn:matrix_measure_ineq_4_dense}
-d(t)\le -c_{d,4}^2.
\end{equation}
\end{subequations}
Now, since the ratios $\frac{p_4}{p_2}$ and $\frac{p_4}{p_3}$ can be made arbitrarily small, we have that all the above inequalities can be fulfilled if $2\beta(t)\bar{x} < d(t)+\alpha_1(t)+{\gamma(t)}$.

Hence, $\forall x$ in the positive orthant and $\forall t \ge t_0$, we have that  $m_1(Pf_x(t,x)P^{-1},t)\le -\bar{c}^2$ for some $\bar{c} \ne 0$ if the following two conditions are fulfilled $\forall t \ge t_0$, $t,t_0\in\T$: {\bf (C1)} $0\le \mu(t) < \frac{2}{d(t)+2\beta(t)\bar{x}}$ and {\bf (C2)} $2\beta(t)\bar{x} < d(t)+\alpha_1(t)+\gamma(t)$. This in turn implies that, by means of Theorem \ref{thm:contraction}, solutions converge to the disease-free solution $x_d(t)$, i.e. $\abs{x(t)-x_d(t)}\rightarrow 0$ as $t\rightarrow +\infty$, $t\in\T$. These conditions have a number of interesting interpretations. Indeed, our results indicate that, in order for the epidemic dynamics to converge towards the disease-free solution: (i) the interactions of the susceptible compartment with the infected compartment should be minimized. In turn, this means that the term $\beta(t)$ should be made as small as possible and this can be achieved by taking social distancing measures \cite{Lancet2020,Gatto10484,NatComms}; (ii) the scattered points of the time scale must be {\em sufficiently} close with each other (this can be thought of as a measure of how quickly measurements are taken and policy makers implement their actions). In particular, it is interesting to see how $\mu(t)$ (and hence the {\em distance} between scattered points) should be upper bounded by a term that depends on the total population at time $t_0$ (through the term $\bar{x}$) and the natural death rate $d(t)$; (iii) finally, we note how, in continuous time, condition {\bf (C1)} is always met and hence only {\bf (C2)} needs to be satisfied.  
\begin{remark}\label{rem:R_0}
We note how {\bf (C2)} is related to the basic reproduction number of the epidemic process, $\mathcal{R}_0$. This is the expected number of secondary cases produced by a single infected person in a completely susceptible population, see e.g. \cite{PMID:12211331,doi:10.1098/rsif.2005.0042}. In fact, for the SIQR model (\ref{eqn:epidemic_model}) it can be shown that $\mathcal{R}_0 = \frac{\beta(t)(N+\bar\Lambda)}{\gamma(t)+\zeta(t)+d(t)+\alpha_1(t)}$ and hence condition {\bf (C2)} can be equivalently written as $\mathcal{R}_0 < 0.5$.
\end{remark}

We now validate our theoretical predictions with simulations. First, we
consider a set of representative parameters for (\ref{eqn:epidemic_model})
and study the dynamics when this evolves on two different time
scales. Then, we consider a set of parameters from the literature.  We
start with the following set of representative parameters:
$\alpha_1(t)=1$, $\alpha_2(t)=1$, $\Lambda(t)=10$, $\beta(t) = 0.1$,
$d(t)=1$, $\zeta(t)=1$, $\varepsilon(t)=0.1$, $\gamma(t)=0.1$. For this
representative set of parameters, the first time scale we consider is the
time scale defined as
$\mathbb{P}_{a,b}:=\bigcup_{k=0}^{+\infty}\left[k(a+b),k(a+b)+a\right]$. Note
that: $\mu(t) = 0$, $\forall t\in\left[k(a+b),k(a+b)+a\right)$ and $\mu(t)
= b$, $\forall t\in\bigcup_{k=0}^{+\infty}\{k(a+b)+a\}$. Let $t_0=0$ and
$S(t_0)=I(t_0)=Q(t_0)=R(t_0) = 5$ and note that the sufficient condition
{\bf (C1)} is clearly satisfied for all $t$ with $\mu(t)=0$. When
$\mu(t) = b$, condition {\bf (C1)} is satisfied if $b<0.28$. It is also
easy to see that condition {\bf (C2)} is satisfied for our choice of
parameters. In Figure \ref{fig:epidemic_model} (top panel) the behavior is
shown for (\ref{eqn:epidemic_model}) when the above parameters are used
and $\mu(t) <0.28$. Next, for the same parameters we also consider the
discrete time scale (i.e. with all time points being scattered) for which
the values of $\mu(t)$ are random for each $t\in\T$,  uniformly distributed in the interval
$(0,c)$. In Figure \ref{fig:epidemic_model} (bottom panel) the behavior of
the system is shown on this different time scale when $c < 0.28$ so that
condition {\bf (C2)} is still met. Finally, we let $\T \equiv \R$ and take
the system parameters of \cite{Ped_20}. Namely, the parameters are:
$N=6\cdot 10^7$, $\alpha_1(t)=\alpha_2(t)=0$, $\beta = 0.373/N$ (this is
discounted by $90\%$ in case of lock-down), $\varepsilon = 0.036$, $\zeta
= 0.067$, $\gamma = 0.067$. Here, we use for the model
(\ref{eqn:epidemic_model}) the above parameters and also consider natural
death and recruitment rates different from zero (that is, we explicitly
consider the situation where there are some non-virus related deaths and
births during the infection). In particular, we let $d(t):=k_d\beta(t)$
and $\Lambda(t):=k_{\Lambda}\beta(t)$, {with
  $k_d,k_{\Lambda}>0$}. Clearly, since $\T\equiv\R$, condition {\bf (C1)}
is always satisfied when this time scale is considered. Moreover, with the
above set of parameters, {(\bf C2)} is met if the population is in
lock-down (i.e. $\beta$ is discounted of $90\%$ from its estimated value)
and if $(2-k_d)0.0373<0.067$. In Figure \ref{fig:epidemic_model_realistic}
the behavior of the system is shown for a $k_d$ that satisfies this
condition.

\begin{figure}[tbh]
\begin{center}
\centering
\psfrag{x}[c]{{$t$}}
\psfrag{y}[c]{{$S(t)$, $I(t)$, $Q(t)$, $R(t)$}}
\includegraphics[width=0.8\linewidth]{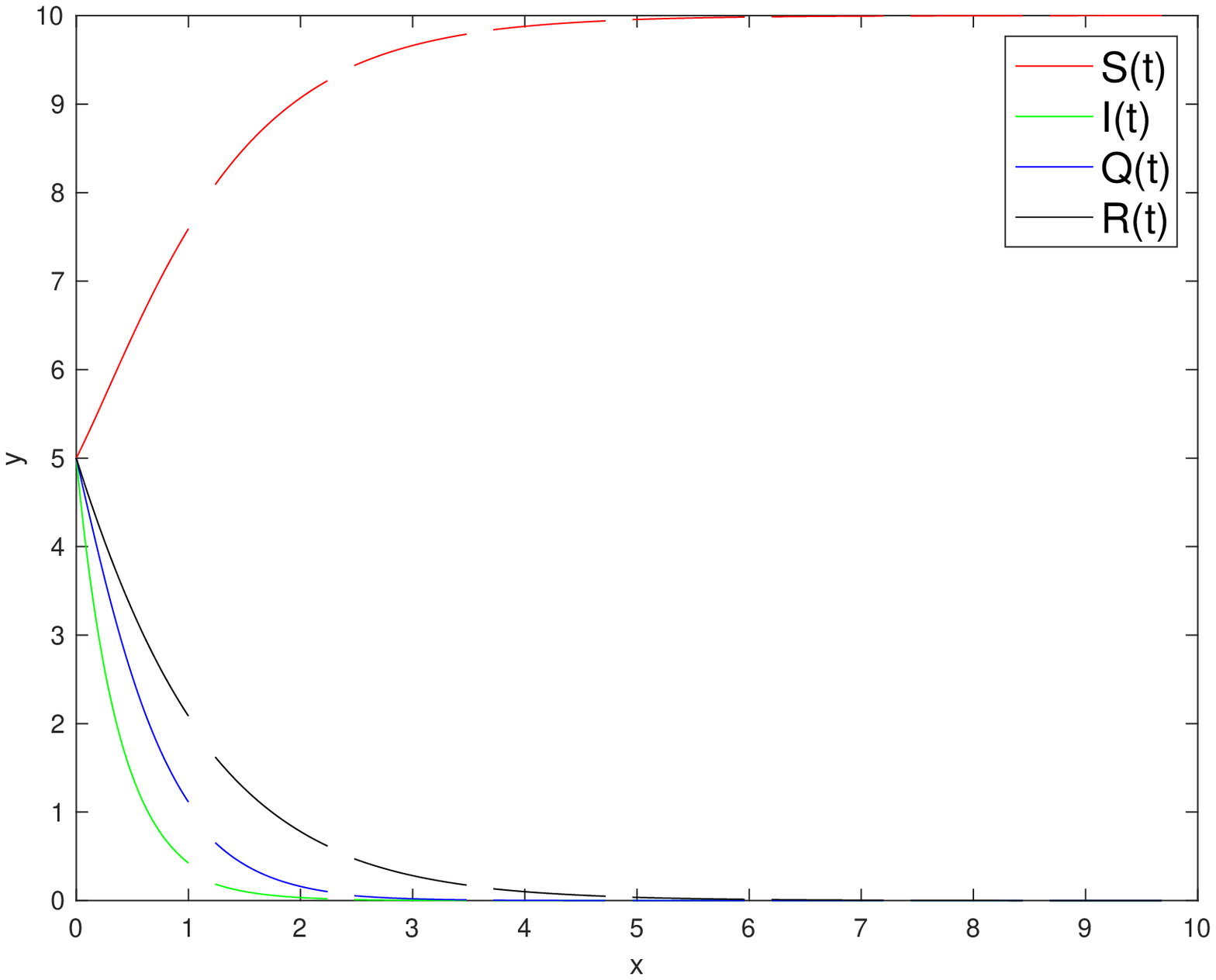}
\includegraphics[width=0.8\linewidth]{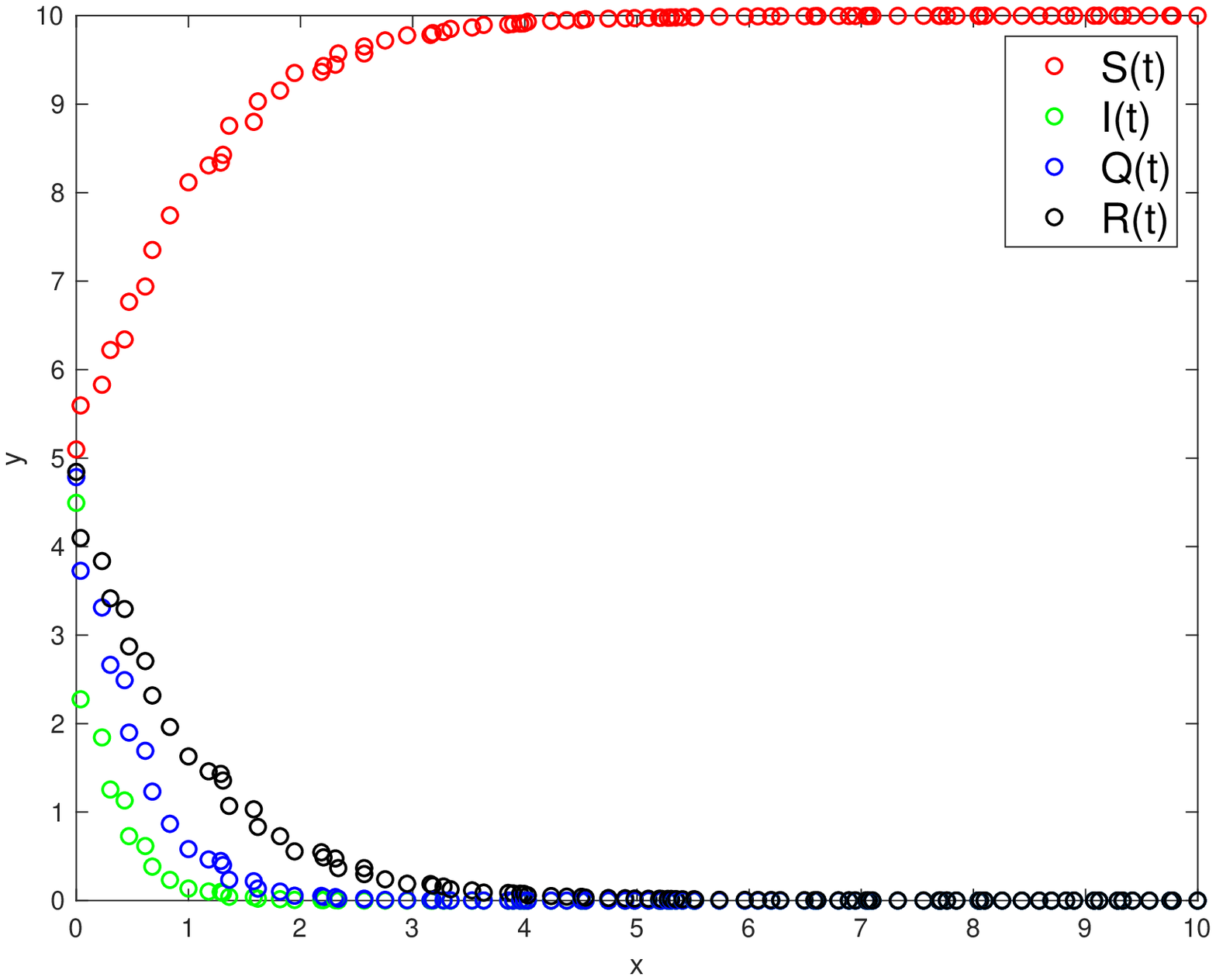}
  \caption{Time evolution of (\ref{eqn:epidemic_model}) with the representative set of parameters of Section \ref{sec:epidemic_model}. Top panel: dynamics evolving on the  time scale $\mathbb{P}_{a,b}$, with $a = 1$ and $b=0.24$. Both conditions {\bf (C1)} and {\bf (C2)} are satisfied. Bottom panel: dynamics evolving, with the same parameters, on the discrete time scale of Section \ref{sec:epidemic_model} with $c=0.24$ so that both {\bf (C1)} and {\bf (C2)} are satisfied. {The code is available at: \burl{https://github.com/GIOVRUSSO/Control-Group-Code}}}
  \label{fig:epidemic_model}
  \end{center}
\end{figure}

\begin{figure}[tbh]
\begin{center}
\centering
\psfrag{x}[c]{{$t$}}
\psfrag{y}[c]{{$S(t)$, $I(t)$, $Q(t)$, $R(t)$}}
\includegraphics[width=0.8\linewidth]{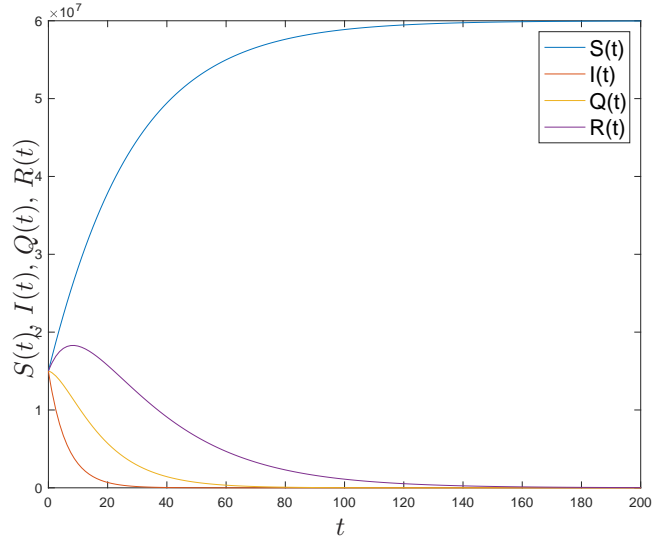}
  \caption{Time behavior of (\ref{eqn:epidemic_model}) when $\T\equiv\R$. The parameters are taken from \cite{Ped_20}. In the simulation, we used $\beta = 0.0373/N$ (i.e. the population is in lock-down),  $k_d=1$, $k_{\Lambda}=N$ and initial conditions  $[0.25 N, 0.25 N, 0.25N, 0.25N]$. {The code is available at: \burl{https://github.com/GIOVRUSSO/Control-Group-Code}}}
  \label{fig:epidemic_model_realistic}
  \end{center}
\end{figure}

\section{A link with Lyapunov functions}\label{sec:Lyapunov}
We now consider the autonomous version of (\ref{eqn:nonlin_sys}), i.e.
\begin{equation}\label{eqn:nonlin_sys_auton}
x^\Delta = f(x), \ \ \ x(t_0)=x_0, \ \ \ t_0 \in \T,
\end{equation}
with $\mathcal{C}\equiv\R^n$ and $\norm{f_x(x)}\le\bar f<+\infty$, $\forall x$. We now relate Theorem \ref{thm:contraction} to the existence of a Lyapunov function for (\ref{eqn:nonlin_sys_auton}). We refer the reader to e.g. \cite{Mar_16,doi:10.1080/10236190902932734,Kay_92,Kay_93} for the standard terminology associated to Lyapunov functions. The main technical result establishing a link between Theorem \ref{thm:contraction} and Lyapunov theory can be stated as follows.
\begin{theorem}\label{thm:Lyap_upper_bound}
Consider the dynamics (\ref{eqn:nonlin_sys_auton}) and assume that there exists some matrix measure, $m(\cdot,\cdot)$, and some $\bar c\ne 0$ such that $m(f_x(\xi),t)\le -\bar c^2$, $\xi\in\R^n$, $t\in\T$. Let $V(x) := \abs{f(x)}$, where $\abs{\cdot}$ is the vector norm inducing the matrix measure $m(\cdot,\cdot)$. Then the following inequality holds:
\begin{equation}
D^+V^\Delta(x\textcolor{black}{,t}) \le \left\{ \begin{array}{*{20}l} 
-\frac{\bar c^2}{\mu(t)}V(x), & \text{if } \mu(t)\ne 0\\
-\bar c^2 V(x), & \text{if } \mu(t) =0,
\end{array}\right.
\end{equation}
where 
$$
\textcolor{black}{D^+V^\Delta (x,t) := \left\{\begin{array}{*{20}l}\limsup_{h\searrow 0} \frac{V(x+h f(x))-V(x)}{h}, & \text{if } \mu(t) =0,\\
\frac{V(x+\mu(t) f(x))-V(x)}{\mu(t)}, & \text{if } \mu(t) \ne 0. \end{array}\right. }
$$
\end{theorem}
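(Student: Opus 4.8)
The plan is to transfer to time scales the familiar contraction-theoretic fact that a uniformly negative logarithmic norm of the Jacobian makes $V(x)=\abs{f(x)}$ an exponentially decaying Lyapunov-type function along trajectories. The starting observation is that $D^+V^\Delta(x,t)$ is, up to the harmless Lipschitz correction made legitimate by $\norm{f_x}\le\bar f$, the (upper right) $\Delta$-derivative at time $t$ of $s\mapsto V(x(s))$ along the solution of \eqref{eqn:nonlin_sys_auton} through $x$: at a right-scattered point $x(\sigma(t))=x+\mu(t)f(x)$, while at a right-dense point $x(t+h)=x+hf(x)+o(h)$. Hence everything reduces to comparing $\abs{f(x+\mu(t)f(x))}$ (resp.\ $\abs{f(x+hf(x))}$) with $\abs{f(x)}$, and the key device is a Hadamard-type representation: since $\mathcal{C}=\R^n$ is convex and $f$ is differentiable, integrating $f_x$ along the segment $s\mapsto x+s\mu(t)f(x)$, $s\in[0,1]$, yields $f(x+\mu(t)f(x))=\bigl(I+\mu(t)M(x,t)\bigr)f(x)$ with $M(x,t):=\int_0^1 f_x\bigl(x+s\mu(t)f(x)\bigr)\,ds$.

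For a right-scattered point ($\mu(t)>0$) I would then estimate
\begin{equation*}
 V\bigl(x+\mu(t)f(x)\bigr) \le \norm{I+\mu(t)M(x,t)}\,V(x) \stackrel{\eqref{eqn:matrix_measure}}{=} \bigl(1+\mu(t)\,m(M(x,t),\mu(t))\bigr)V(x),
\end{equation*}
and apply Lemma~\ref{lem:integral} (Jensen's inequality for the convex map $m(\cdot,\mu(t))$ of Lemma~\ref{lem:properties}(iv)) together with the hypothesis $m(f_x(\xi),\mu(t))\le-\bar c^2$ for all $\xi\in\R^n$, which gives $m(M(x,t),\mu(t)) \le \int_0^1 m\bigl(f_x(x+s\mu(t)f(x)),\mu(t)\bigr)\,ds \le -\bar c^2$. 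Dividing the resulting inequality by $\mu(t)$, and recalling that in this case $D^+V^\Delta(x,t)=\mu(t)^{-1}\bigl(V(x+\mu(t)f(x))-V(x)\bigr)$, then yields the asserted decrease estimate at scattered points.

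For a right-dense point ($\mu(t)=0$) the same representation with $h>0$ in place of $\mu(t)$ gives $f(x+hf(x))=(I+hM_h(x))f(x)$, $M_h(x):=\int_0^1 f_x(x+shf(x))\,ds$, and hence $h^{-1}\bigl(V(x+hf(x))-V(x)\bigr) \le h^{-1}\bigl(\norm{I+hM_h(x)}-1\bigr)V(x)=m(M_h(x),h)\,V(x)$. As $h\searrow0$ one has $M_h(x)\to f_x(x)$ by continuity of $f_x$, so by the \emph{joint} continuity of $m$ on $\K^{n\times n}\times[0,\infty)$ from Lemma~\ref{lem:properties}(vi) one obtains $m(M_h(x),h)\to m(f_x(x),0)\le-\bar c^2$; taking $\limsup_{h\searrow0}$ closes the dense case. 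I expect this last limit to be the only delicate point: one is letting the matrix argument and the graininess argument of $m$ approach the boundary simultaneously, so the argument genuinely relies on the joint (not merely separate) continuity of the matrix measure established earlier, whereas the scattered-point bound is a one-line computation once the Jensen estimate of Lemma~\ref{lem:integral} is available.
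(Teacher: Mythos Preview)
Your approach is essentially the paper's: both use the Hadamard representation $f(x+\mu f(x))=(I+\mu M)f(x)$ with $M$ an averaged Jacobian, bound $\norm{I+\mu M}$ via the matrix-measure definition, and push the measure inside the integral (you invoke Lemma~\ref{lem:integral} explicitly; the paper writes the equivalent $I+\int_0^{\mu} f_x\,d\eta=\mu^{-1}\int_0^{\mu}(I+\mu f_x)\,d\eta$ and applies the triangle inequality for the norm, which is the same Jensen step in different clothing). For the right-dense case the paper simply cites the continuous-time result of \cite{coogan2013note}, whereas you spell out the limit via the joint continuity of $m$ from Lemma~\ref{lem:properties}(vi); your argument there is correct and more self-contained than the paper's.

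One caveat: your scattered-point computation (and, in fact, the paper's own) actually delivers $D^+V^\Delta(x,t)\le -\bar c^{2}\,V(x)$, not the stated $-\bar c^{2}/\mu(t)\,V(x)$. From $V(x+\mu f(x))\le(1-\mu\bar c^{2})V(x)$, dividing by $\mu$ gives $-\bar c^{2}V(x)$; no extra $1/\mu(t)$ factor appears. This seems to be a slip in the theorem statement rather than in either argument, so your phrase ``yields the asserted decrease estimate'' is not quite accurate as written.
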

\begin{proof}
We only need to prove the result when $\mu(t) \ne 0$ since the proof for ODEs can be found in \cite[Theorem $1$]{coogan2013note}. Pick any $t$ such that $\mu(t)\ne0$. The definition of $D^+V^\Delta$ yields (with $I$ being the identity matrix of appropriate dimension):
\begin{equation}\label{eqn:Lyap_proof}
\begin{split}
\frac{\abs{f(x+\mu(t)f(x))}-\abs{f(x)}}{\mu(t)} & = \frac{\abs{f(x) + \int_0^{\mu(t)}f_x(x+\eta f(x))f(x)d\eta}-\abs{f(x)}}{\mu(t)},\\
& \le \frac{1/\mu(t)\left(\int_{0}^{\mu(t)}\left(\norm{I + \mu(t) f_x(x+\eta f(x))}-1\right)d\eta\right)\abs{f(x)}}{\mu(t)},
\end{split}
\end{equation}
where $\norm{\cdot}$ is the matrix norm induced by $\abs{\cdot}$. Thus, by hypotheses and using the definition of the matrix measure (Definition \ref{def:matrix_measure}) we have from (\ref{eqn:Lyap_proof}):
\begin{equation}\label{eqn:Lyap_proof_complete}
\begin{split}
D^+V^\Delta(x\textcolor{black}{,t}) \le -\frac{\bar c^2}{\mu(t)}V(x),
\end{split}
\end{equation}
thus proving the result.
\end{proof}
The next result formalizes the fact that, if (\ref{eqn:nonlin_sys_auton}) has an equilibrium point and satisfies the hypotheses of Theorem \ref{thm:Lyap_upper_bound}, then $V(x) = \abs{f(x)}$ is a Lyapunov function for the system and the equilibrium point is asymptotically stable. Without loss of generality, in the result below we assume that $x=0$ is an equilibrium for the system.
\begin{corollary}
Consider (\ref{eqn:nonlin_sys_auton}) and assume that: (i) $x=0$ is an equilibrium point for the dynamics; (ii) the hypotheses of Theorem \ref{thm:Lyap_upper_bound} are satisfied. Then, $V(x)=\abs{f(x)}$ is a Lyapunov function for the system and the origin is globally asymptotically stable.
\end{corollary}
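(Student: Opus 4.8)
The plan is to verify that $V(x):=\abs{f(x)}$ meets the two requirements of a Lyapunov function for \eqref{eqn:nonlin_sys_auton} — positive definiteness and a strict decrease along solutions — and then to read off global asymptotic stability, most directly from the contraction estimate of Theorem~\ref{thm:contraction}.

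For positive definiteness, note first that $f(0)=0$ (as $0$ is an equilibrium), so $V(0)=0$; the substance is to show $f(x)\neq0$ for $x\neq0$, i.e.\ that $0$ is the \emph{unique} equilibrium. Here I would invoke Theorem~\ref{thm:contraction}: the set $\mathcal{C}=\R^n$ is convex, and since $\norm{f_x(\cdot)}\le\bar f$ makes $f$ globally Lipschitz the autonomous system is forward complete, so $\R^n$ is forward invariant and the theorem applies, giving $\abs{x(t)-y(t)}\le\abs{x(t_0)-y(t_0)}\,e_{-\bar c^2}(t,t_0)$ on $\T_{t_0}$ for any two solutions. A second equilibrium $\xi\neq0$ would produce constant solutions with $\abs{x(t)-y(t)}=\abs{\xi}>0$ forever, while the bound forces $\abs{x(t)-y(t)}\le\abs{\xi}\,e_{-\bar c^2}(t,t_0)\to0$ — a contradiction. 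Hence $f$ vanishes only at $0$ and $V>0$ off the origin. If one additionally wants $V$ radially unbounded (so that the Lyapunov argument itself is global), I would use Lemma~\ref{lem:properties}\,(viii) to get $\abs{f_x(\xi)v}\ge-m(f_x(\xi),\mu)\abs{v}\ge\bar c^2\abs{v}$, so every $f_x(\xi)$ is invertible with $\norm{f_x(\xi)^{-1}}\le\bar c^{-2}$; the Hadamard global inverse function theorem then makes $f$ a diffeomorphism of $\R^n$ with $\abs{f(x)}=\abs{f(x)-f(0)}\ge\bar c^2\abs{x}$, whence $V(x)\to\infty$ as $\abs{x}\to\infty$.

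Next I would record the decrease property: Theorem~\ref{thm:Lyap_upper_bound} gives, along any solution, $D^+V^\Delta(x,t)\le-w(x,t)$ with $w(x,t)$ a strictly positive multiple of $V(x)$ (given explicitly there), so $D^+V^\Delta$ is negative definite, and $-\bar c^2\in\mathcal{R}^+$ by Theorem~\ref{thm:contraction}, so the associated scalar comparison dynamics is well posed. Combined with positive definiteness, this is exactly the assertion that $V=\abs{f(x)}$ is a Lyapunov function in the sense used in Section~\ref{sec:Lyapunov}. For global asymptotic stability I would then apply Theorem~\ref{thm:contraction} once more with $y_0=0$, obtaining
\begin{equation*}
\abs{x(t,t_0,x_0)}=\abs{x(t,t_0,x_0)-x(t,t_0,0)}\le\abs{x_0}\,e_{-\bar c^2}(t,t_0),\qquad t\in\T_{t_0};
\end{equation*}
since $\mathrm{Log}(1-\mu\bar c^2)/\mu\le-\bar c^2$ for every admissible $\mu\ge0$ (concavity of $\mathrm{Log}$), this bound is dominated by $\abs{x_0}e^{-\bar c^2(t-t_0)}$, which is at most $\abs{x_0}$ and tends to $0$, giving Lyapunov stability (uniform in $t_0$) together with global attractivity. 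As an alternative to this last shortcut, one can instead feed the decrease estimate into the time-scale comparison principle \cite[Theorem~6.1]{Boh_03}.

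The hard part will be the positive definiteness of $V$: the pointwise estimate on $D^+V^\Delta$ alone does not rule out spurious zeros of $f$, so one genuinely needs the contraction property of Theorem~\ref{thm:contraction} (not just Theorem~\ref{thm:Lyap_upper_bound}) to pin down uniqueness of the equilibrium — and, if radial unboundedness is also required, the invertibility bound from Lemma~\ref{lem:properties}\,(viii) plus a global inversion theorem. Everything else is routine bookkeeping with estimates already proved.
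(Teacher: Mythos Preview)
Your proposal is correct, but the route differs from the paper's in one substantive place: the lower class-$\mathcal{K}$ bound on $V$. You obtain $\abs{f(x)}\ge\bar c^{2}\abs{x}$ by first using contraction to rule out a second equilibrium and then invoking Hadamard's global inverse function theorem via the uniform bound $\norm{f_x(\xi)^{-1}}\le\bar c^{-2}$. The paper gets the same inequality in one line, without Hadamard: writing $f(x)=\bigl[\int_0^1 f_x(\eta x)\,d\eta\bigr]x$ (since $f(0)=0$), then applying Lemma~\ref{lem:properties}\,(viii) followed by the Jensen-type inequality of Lemma~\ref{lem:integral} to obtain $\abs{f(x)}\ge -m\bigl(\int_0^1 f_x(\eta x)\,d\eta,\mu\bigr)\abs{x}\ge -\int_0^1 m(f_x(\eta x),\mu)\,d\eta\,\abs{x}\ge\bar c^{2}\abs{x}$. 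This is more elementary and simultaneously delivers positive definiteness, the class-$\mathcal{K}$ lower bound, and radial unboundedness, making your separate uniqueness-via-contraction step redundant. Conversely, your final step---reading off global asymptotic stability directly from Theorem~\ref{thm:contraction} with $y_0=0$ and the estimate $e_{-\bar c^2}(t,t_0)\le e^{-\bar c^2(t-t_0)}$---is a self-contained alternative to the paper's appeal to the Lyapunov result of \cite{Kay_92}, and arguably cleaner since it avoids external references.
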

\begin{proof}
We make use of Theorem $3.2$ in \cite{Kay_92}. This results implies that an equilibrium point of the system is globally asymptotically stable if $V(x)=\abs{f(x)}$ is a Lyapunov function, i.e. $V(x)$ is such that $\alpha(\abs{x})\le V(x)\le\beta(\abs{x})$ and $D^+V^\Delta(x)\le -\gamma(W(x))$, where $W(\cdot)$ is a locally Lipschitz function and $\alpha(\cdot)$, $\beta(\cdot)$ and $\gamma(\cdot)$ are class-$\mathcal{K}$ functions. If the above conditions are met, then $V(x)$ is said to be a Lypaunov function for the time scale dynamics. Clearly, from Theorem \ref{thm:Lyap_upper_bound} we know that, by picking $V(x) = \abs{f(x)}$, then the condition on $D^+V^\Delta(x)$ is satisfied (with $\gamma(x):= \bar c^2/\mu(t)x$ and $W(x)=V(x)$). Hence, to prove that $V(x) = \abs{f(x)}$ is a Lyapunov function we only 
\textcolor{black}{need to show that there exist class $\mathcal{K}$-function $\alpha(\cdot),\beta(\cdot)$ such that  $\alpha(\abs{x})\le V(x)\le\beta(\abs{x})$, $x\in\R^n$. 
The existence of $\beta$ is trivial; it suffices to consider $\tilde{\beta}(r) := \max \{ V(x) : \abs{x}\leq r\}$ and to upper bound this continuous and increasing function by a class-$\mathcal{K}$ function $\beta$.  
We now prove the existence of $\alpha$ and will use Taylor's theorem. Indeed:
\begin{equation*}
\begin{split}
\abs{f(x)} & \ge \abs{\left[\int_0^1f_x(\eta x)d\eta\right]x} - \abs{f(0)} \ge -m\left(\int_0^1 f_x(\eta x)d\eta, \mu(t) \right)\abs{x}\\
& \ge -\int_0^1 m(f_x(\eta x),\mu(t))d\eta\abs{x} \ge  \bar c^2\abs{x},
\end{split}    
\end{equation*}
where we used Lemma} \ref{lem:properties} (viii) and Lemma \ref{lem:integral}. The result is then proved.
\end{proof}

\section{Synchronizing complex time scale networks via pinning}\label{sec:pinning}
We now make use of Theorem \ref{thm:contraction} to study pinning synchronizability  in complex undirected networks of $N$ diffusively coupled dynamical systems (or nodes) evolving on a given time scale, $\T$. \textcolor{black}{The concept is called "pinning
controllability" in \cite{WANG2014103}, but this is unfortunate wording due to the standard use of the term "controllability" in other contexts, where this defines an open-loop concept.}

We denote by $L=[l_{ij}]_{i,j=1}^N$ the symmetric Laplacian matrix associated to the underlying graph, $\mathcal{G} := (\mathcal{N},\mathcal{E})$, where $\mathcal{N}$ is the set of nodes and $\mathcal{E}$ is the set of edges. The dynamics of each node is described by a nonlinear differential equation on the time scale $\T$. Namely, the dynamics of the $i$-th network node is given by
\begin{equation}\label{eqn:node_dyn}
x_i^\Delta = f(t,x_i) + \sigma \Gamma \sum_{j=1}^Nl_{ij}\left(x_j-x_i\right) + u_i(t),
\end{equation}
where: (i) $x_i\in\R^n$ and $x_i(t_0):= x_{i,0}$, $t_0\in\T$; (ii) $f:\T\times\R^n\to\R^n$ is the intrinsic dynamics, with $f_x$ again denoting the Jacobian with respect to $x$; (iii) $\Gamma\in\R^{n\times n}$ is the coupling matrix and $\sigma \in\R$ is the coupling strength; (iv) $u_i(\cdot)$ is the control action on the $i$-th node. \textcolor{black}{In the following, we assume that $f,f_x$ satisfy all assumptions we imposed on system \eqref{eqn:nonlin_sys}. 
}

Our goal is to give conditions on $u_i(t)$ so that the state of all the network nodes converge to a desired, or reference, state/signal, $x_r(\cdot)$ with $x_r^\Delta(t) = f(t,x_r)$, $t\in\T$. We consider control actions of the form
\begin{equation}\label{eqn:control}
u_i(t) = p_i\sigma_r\Gamma\left(x_r(t) - x_i(t)\right)
\end{equation} 
Also, only a subset of the network nodes directly receives the reference signal and we denote the set of these {\em pinned} nodes by $\mathcal{N}_p \subseteq \mathcal{N}$. In (\ref{eqn:control}), $p_i=1$ if $i\in\mathcal{N}_p$ and $p_i=0$ otherwise. Finally, $\sigma_r>0$ is the control strength. 
We refer to the feedback mechanism \eqref{eqn:control} specified by the parameters $\mathcal{N}_p$ and $\sigma_r$ as a pinning controller.
We now introduce the notion of time scale synchronization onto $x_r(t)$.
\begin{definition}
\textcolor{black}{Let $x_r:\T\to\R^n$ be a solution of $x^\Delta(t) = f(t,x(t))$.
The closed-loop network (\ref{eqn:node_dyn}) - (\ref{eqn:control}) evolving on $\T$ is said to achieve time scale synchronization with $x_r(\cdot)$ if $\lim_{t\rightarrow+\infty}\abs{x_i(t)-x_r(t)} =0$, for all $i=1,\ldots,N$.}
\end{definition}
The above definition is used to formally introduce the notion of pinning 
synchronizability
on the time scale $\T$. 
\begin{definition}
\textcolor{black}{The network (\ref{eqn:node_dyn})   is said to be {\em pinning 
synchronizable
on $\T$} if there exists some $\sigma_r>0$ and a set of pinned nodes $\mathcal{N}_p\subset\mathcal{N}$ such that for any solution $x_r$ of \eqref{eqn:nonlin_sys} the feedback (\ref{eqn:control}) achieves time scale synchronization with $x_r(t)$.
}
\end{definition}
In the sequel, we let $\tilde\lambda_i\in\sigma(\tilde L)$, $i=1,\ldots,N$ be the eigenvalues of the matrix $\tilde L := \sigma L + \sigma_r P$, with $P:= \mathrm{diag}\{p_1,\ldots,p_N\}$. We are now ready to introduce our next result \textcolor{black}{in which we will specifically consider the Euclidean norm $|\cdot|_2$ and the induced matrix measure $m_2$.}

\begin{theorem}\label{thm:nonlinear_pinning}
Consider network (\ref{eqn:node_dyn}) 
evolving on the time scale $\T$. 
Assume that $\mathcal{G}$ is undirected.  Then, the network is pinning 
synchronizable
on $\T$, if there exist $\mathcal{N}_p\subset\mathcal{N}$ and $\sigma_r>0$ with associated matrix $\tilde{L}$, a $c_f\in\R$ and $\bar c \ne 0$ such that, for all $x \in \R^n$ and $t\in\T$: {\bf (1)} $m_2(f_x(t,x), 2\mu(t)) \le c_f$; {\bf (2)} $c_f + \max_im_2(-\tilde{\lambda}_i \Gamma,2\mu(t))\le - \bar{c}^2$. In particular, if these conditions are satisfied, {then \textcolor{black}{$-\bar{c}^2 \in \mathcal{R}^+(\T)$} and there exists a pinning controller and some $0<K<+\infty$ such that} \textcolor{black}{for all solutions $x_r$ of \eqref{eqn:nonlin_sys} and all solutions of (\ref{eqn:node_dyn})-(\ref{eqn:control}) }
\begin{equation}\label{eqn:upperbound_pinning}
\abs{x_i(t)-x_r(t)}_2\le K\abs{x(t_0)-x_r(t_0)}_2
\textcolor{black}{e_{-\bar{c}^2}(t,t_0)}
\quad
\ \  {\forall t_0\in\T, t\in\T_{t_0}, i\in \mathcal{N}}.
\end{equation}
\end{theorem}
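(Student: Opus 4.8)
The plan is to recast synchronization with $x_r$ as a contraction problem for the stacked error and then invoke Theorem~\ref{thm:contraction}. I would fix $t_0\in\T$, a solution $x_r$ of \eqref{eqn:nonlin_sys} on $\T_{t_0}$ and a solution $(x_1,\dots,x_N)$ of the closed loop \eqref{eqn:node_dyn}--\eqref{eqn:control}, and set $e_i:=x_i-x_r$, $e:=(e_1^T,\dots,e_N^T)^T\in\R^{Nn}$. Subtracting $x_r^\Delta=f(t,x_r)$ from \eqref{eqn:node_dyn}, inserting \eqref{eqn:control}, and using that the diffusive term only sees differences (so $L\ones=0$ applies after passing to the $e_i$), the error obeys a system of the form
\[
e^\Delta=G(t,e),\qquad G(t,e):=F(t,e)-(\tilde L\otimes\Gamma)\,e,
\]
where $F(t,e)$ stacks the terms $f(t,e_i+x_r(t))-f(t,x_r(t))$; in particular $G(t,0)=0$, i.e.\ $e\equiv0$ (that is, $x_i\equiv x_r$) is a solution. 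Under the standing assumptions on $f,f_x$ the map $G$ is rd-continuous in $t$, differentiable in $e$, with rd-continuous Jacobian
\[
G_e(t,e)=\mathrm{diag}\bigl(f_x(t,e_1+x_r(t)),\dots,f_x(t,e_N+x_r(t))\bigr)-\tilde L\otimes\Gamma .
\]

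The hard part will be showing $m_2(G_e(t,e),\mu(t))\le-\bar c^2$ for all $e\in\R^{Nn}$, $t\in\T$. I would first apply subadditivity on time scales, Lemma~\ref{lem:properties_mu}\,(i), to get
\[
m_2(G_e(t,e),\mu(t))\le m_2\bigl(\mathrm{diag}(f_x(t,e_i+x_r(t)))_i,\,2\mu(t)\bigr)+m_2\bigl(-\tilde L\otimes\Gamma,\,2\mu(t)\bigr),
\]
which is exactly why hypotheses \textbf{(1)} and \textbf{(2)} carry a $2\mu(t)$. For a block-diagonal matrix one has $\norm{I+s\,\mathrm{diag}(M_i)}_2=\max_i\norm{I+sM_i}_2$, so $m_2(\mathrm{diag}(M_i),s)=\max_i m_2(M_i,s)$ for every $s\ge0$; with \textbf{(1)} this bounds the first summand by $c_f$. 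For the second summand I would diagonalize the symmetric matrix $\tilde L=Q\Lambda Q^T$ ($Q$ orthogonal, $\Lambda=\mathrm{diag}(\tilde\lambda_1,\dots,\tilde\lambda_N)$), write $-\tilde L\otimes\Gamma=(Q\otimes I)(-\Lambda\otimes\Gamma)(Q\otimes I)^T$ with $Q\otimes I$ orthogonal, and use Lemma~\ref{lem:properties}\,(ix) (invariance of $m_2$ under orthogonal similarity) together with the block-diagonal identity applied to $-\Lambda\otimes\Gamma=\mathrm{diag}(-\tilde\lambda_i\Gamma)_i$, obtaining $m_2(-\tilde L\otimes\Gamma,2\mu(t))=\max_i m_2(-\tilde\lambda_i\Gamma,2\mu(t))$. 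Adding and invoking \textbf{(2)} gives $m_2(G_e(t,e),\mu(t))\le c_f+\max_i m_2(-\tilde\lambda_i\Gamma,2\mu(t))\le-\bar c^2$; in particular $-\bar c^2\in\mathcal R^+(\T,\R)$ by Lemma~\ref{lem:measure_regressive}\,(iii).

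It then remains to apply Theorem~\ref{thm:contraction} to $e^\Delta=G(t,e)$ on the convex set $\mathcal C=\R^{Nn}$ (forward completeness of the error system follows from the resulting contraction estimate itself, which precludes finite-time blow-up, so a standard maximal-interval argument closes this point), comparing the solution $e(\cdot)$ with the zero solution: this yields $\abs{e(t)}_2\le\abs{e(t_0)}_2\,e_{-\bar c^2}(t,t_0)$ for all $t\in\T_{t_0}$. Since $\abs{x_i(t)-x_r(t)}_2=\abs{e_i(t)}_2\le\abs{e(t)}_2$, the bound \eqref{eqn:upperbound_pinning} follows with $K$ absorbing the chosen norm on $\R^{Nn}$, and the very $\mathcal N_p,\sigma_r$ of the hypothesis constitute the required pinning controller. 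Finally, since $\xi_\mu(-\bar c^2)\le-\bar c^2$ for every $\mu\ge0$, one has $e_{-\bar c^2}(t,t_0)\le e^{-\bar c^2(t-t_0)}\to0$, so the closed loop synchronizes with $x_r$, in fact exponentially, establishing pinning synchronizability. The principal obstacle is the matrix-measure estimate for $G_e$: one must notice that subadditivity on time scales forces a doubling of the graininess (Lemma~\ref{lem:properties_mu}\,(i)), and then combine orthogonal invariance of $m_2$ with the block-diagonal reduction to convert a single global bound into the two per-block conditions \textbf{(1)}--\textbf{(2)}; the only further delicate point, that $e_{-\bar c^2}(t,t_0)\to0$ on an arbitrary time scale, is dispatched by the pointwise inequality $\xi_\mu(-\bar c^2)\le-\bar c^2$.
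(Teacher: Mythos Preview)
Your proposal is correct and follows the same high-level strategy as the paper (form the stacked error dynamics, bound the Euclidean matrix measure via subadditivity with doubled graininess, exploit the block-diagonal structure and the orthogonal diagonalization of $\tilde L$, then invoke Theorem~\ref{thm:contraction}), but your execution is genuinely more direct than the paper's. The paper first linearizes the error along the given trajectory via the integral mean value theorem, writing $E^\Delta=[A(t)-(\tilde L\otimes\Gamma)]E$ with $A(t)=\int_0^1 J(t,\eta X+(1-\eta)X_r)\,d\eta$, then applies the orthogonal change of coordinates $Z=(Q\otimes I)^{-1}E$ to the \emph{entire} system and bounds $m_2$ in the new coordinates; this forces an appeal to Lemma~\ref{lem:integral} to pass the matrix measure through the integral defining $A(t)$, and the final constant comes out as the condition number $K=\sigma_{\max}((Q\otimes I)^{-1})/\sigma_{\min}((Q\otimes I)^{-1})$ (which is in fact $1$). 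You instead keep the nonlinear error system $e^\Delta=G(t,e)$ and bound $m_2(G_e(t,e),\mu(t))$ directly: the block-diagonal identity $m_2(\mathrm{diag}(M_i),s)=\max_i m_2(M_i,s)$ handles the $f_x$ part without any integral, and the orthogonal invariance of $m_2$ (Lemma~\ref{lem:properties}\,(ix) specialized to an orthogonal $P$) is used only to evaluate $m_2(-\tilde L\otimes\Gamma,2\mu(t))$ exactly, rather than to transform the whole system. This yields $K=1$ immediately and dispenses with Lemma~\ref{lem:integral}. Both routes are valid; yours is shorter and makes more transparent why conditions \textbf{(1)}--\textbf{(2)} carry the factor $2\mu(t)$.
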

\begin{proof}
\textcolor{black}{Fix a solution $x_r(\cdot)$ of \eqref{eqn:nonlin_sys}.}
By combining (\ref{eqn:node_dyn}) and (\ref{eqn:control}) we get, for each individual node
$$
x_i^\Delta = f(t,x_i) + \sigma \Gamma \sum_{j=1}^Nl_{ij}\left(x_j-x_i\right) + p_i\sigma_r\Gamma\left(x_r - x_i\right).
$$
We let $X := [x_1^T,\ldots,x_N^T]^T$, $X_r:=1_N\otimes x_r$ and write the dynamics for the error $E(t):= X(t)-X_r(t)$ as
$$
E^\Delta = F(t,X) - (\tilde L \otimes \Gamma)E - F(t,X_r),
$$
where $F(t,X):=[f(t,x_1)^T,\ldots,f(t,x_N)^T]^T$ and $F(t,X_r) := 1_N\otimes f(t,x_r)$.  Now, note that {$F(t,X)-F(t,X_r)= \left(\int_{0}^1J(t,\eta X +(1-\eta)X_r)d\eta\right)E=:A(t)E$}, where $J(t,X):=\frac{\partial F}{\partial X}$ (see e.g. \cite{1083507,MONTEIL2019198}). Hence, the error dynamics becomes
\begin{equation}\label{eqn:error_dyn}
E^\Delta = A(t)E - (\tilde L \otimes \Gamma)E.
\end{equation}
{Since $\tilde L$ is symmetric, we have that there exists a $N\times N$ matrix, say $Q$, such that $Q^TQ=I_N$ and $Q^T\tilde L Q = \tilde \Lambda$, where $\tilde \Lambda$ is the diagonal matrix having on its main diagonal the eigenvalues of $\tilde L$.} We then consider the coordinate transformation $Z:=(Q\otimes I)^{-1}E$, where $I$ is the $n\times n$ identity matrix. In this new set of coordinates, the time scale dynamics (\ref{eqn:error_dyn}) becomes
 \begin{equation}\label{eqn:error_dyn_transf}
Z^\Delta = \left[(Q\otimes I)^{-1}A(t)(Q\otimes I) - (\tilde \Lambda \otimes \Gamma)\right]Z.
\end{equation}
{We now show that, under the hypotheses, $m_2((Q\otimes I)^{-1}A(t)(Q\otimes I) - (\tilde \Lambda \otimes \Gamma),\mu(t)) \le -\bar c^2$, $\bar c \ne 0$.} To this aim, we now compute an upper bound for $m_2((Q\otimes I)^{-1}A(t)(Q\otimes I) - (\tilde \Lambda \otimes \Gamma),\mu(t))$ and start with observing that:
\begin{equation}\label{eqn:upperbound_matrixmeasure}
\begin{split}
& m_2((Q\otimes I)^{-1}A(t)(Q\otimes I) - (\tilde \Lambda \otimes \Gamma),\mu(t))\\
&\le m_2((Q\otimes I)^{-1}A(t)(Q\otimes I),2\mu(t)) + \max_i m_2(- \tilde\lambda_i\Gamma,2\mu(t)).
\end{split}
\end{equation}
The upper bound in (\ref{eqn:upperbound_matrixmeasure}) was obtained from Lemma \ref{lem:properties_mu} (i) and by using the fact that the matrix $\tilde\Lambda \otimes \Gamma$ is a block diagonal matrix having on its main diagonal blocks the $n\times n$ matrices $\tilde\lambda_i\Gamma$, $i=1,\ldots,N$. We now give an upper bound for the first term on the right hand side of (\ref{eqn:upperbound_matrixmeasure}). In doing so, we recall that $Q$ is an orthogonal matrix and hence $(Q\otimes I)$ is also orthogonal. Therefore:
\begin{equation}\label{eqn:ineq_norm}
\begin{split}
\norm{I+2\mu(t)(Q\otimes I)^{-1}A(t)(Q\otimes I)}_2 & = \norm{(Q\otimes I)^{-1}\left(I+2\mu(t)A(t)\right)(Q\otimes I)}_2 \\
& \le \norm{(Q\otimes I)^{-1}}_2\norm{I+2\mu(t)A(t)}_2\norm{Q\otimes I}_2\\
& \le \norm{I+2\mu(t)A(t)}_2,
\end{split}
\end{equation}
where we used the fact that the condition number of a real orthogonal (and hence  unitary) matrix is equal to $1$. Therefore, from (\ref{eqn:ineq_norm}) it follows that $m_2((Q\otimes I)^{-1}A(t)(Q\otimes I),2\mu) \le m_2(A(t),2\mu)$. Now, by definition of the matrix $A(t)$ and Lemma \ref{lem:integral} we have:
\begin{equation}\label{eqn:upperbound_1}
\begin{split}
m_2(A(t),2\mu) & = m_2\left(\int_0^1J(t,\eta X + (1-\eta)X_r)d\eta,2\mu\right)\\
& \le \int_0^1 m_2 (J(t,\eta X + (1-\eta)X_r)d\eta,2\mu)) \le c_f,
\end{split}
\end{equation}
where the last inequality follows from the fact that the matrix $J(\cdot,\cdot)$ is a block diagonal matrix having on its main diagonal the Jacobians of the functions $f(t,x_i)$ and from the fact that $m_2(f_x(t,x),2\mu)\le c_f$, $\forall x\in\R^n$ and $\forall t\in\T$. With the upper bound in (\ref{eqn:upperbound_1}) we have, using (\ref{eqn:upperbound_matrixmeasure}):
$$
m_2((Q\otimes I)^{-1}A(t)(Q\otimes I) - (\tilde \Lambda \otimes \Gamma),\mu) \le c_f + \max_i m_2(- \tilde\lambda_i\Gamma,2\mu) \le -\bar c^2,
$$
with the last inequality following from condition {\bf (2)}. Now, applying Theorem \ref{thm:contraction} to (\ref{eqn:error_dyn_transf}) yields $\abs{z(t)}_2 \le \abs{z(t_0)}_2\int_{t_0}^t\exp\left(\xi_{\mu(\tau)}(-\bar c^2)\right)d\tau$. This, by definition of $z(t)$, leads to the desired conclusion with $K = \sigma_{\max}((Q\otimes I)^{-1})/\sigma_{\min}((Q\otimes I)^{-1})$.
\end{proof}
Before giving an application example for Theorem \ref{thm:nonlinear_pinning} we make the following remarks.
\begin{remark}
(i) In Theorem \ref{thm:nonlinear_pinning} we do not make any assumption on the fact that $\mathcal{G}$ is connected. In principle, both conditions $1$ and $2$ of Theorem~\ref{thm:nonlinear_pinning} can be satisfied even if the graph is not connected. However, in this case the condition can only be satisfied if $c_f \le -\bar{c}^2 - \bar{\Gamma}$ (where $\bar{\Gamma}:=\max_im_2(-\tilde{\lambda}_i\Gamma,2\mu)$) and this is a rather restrictive condition, (ii) Consider the case where $\mathcal{G}$ is connected, $\T\equiv\R$ and $\Gamma$ is positive definite. In this special situation, condition $2$ is satisfied if $c_f + \tilde{\lambda}_1m_2(-\Gamma,0)<0$, where $\tilde{\lambda}_1$ is the smallest eigenvalue of $\tilde{L}$. That is, in continuous time, one only needs to check condition $2$ of Theorem \ref{thm:nonlinear_pinning} for $\tilde{\lambda}_1$ and does not have to check the condition over all the eigenvalues of $\tilde L$. Unfortunately, this is not true in general when $\mu(t)\ne0$. Indeed, when $\tilde\lambda_i\ge0$ $\forall i$, we get from Lemma \ref{lem:properties_mu} {\em (ii)} that $\max_im_2(-\tilde{\lambda}_i\Gamma,2\mu)=\max_i\tilde{\lambda}_im_2(-\Gamma,2\tilde\lambda_i\mu)$ and hence, in order to verify the condition we still need to compute $\max_im_2(-\Gamma,2\tilde\lambda_i\mu)$. This is consistent with the results of \cite{LU20182104,LIU2016147}.
\end{remark}

\section{Collective opinion dynamics with stubborn agents}\label{sec:opinion}
We investigate certain collective opinion formation processes \cite{doi:10.1137/130913250,Friedkin11380} and, to this aim, we consider a network of the form (\ref{eqn:node_dyn}) where the intrinsic node dynamics models an {\em agent} that needs to decide between two mutually excluding opinions, see e.g. \cite{9029844,Pot_Ste_97,Flay_78,doi:10.1137/19M1249515}. Specifically, $f(t,x_i):=-dx_i + S(x_i)$ where $S: \R \rightarrow [-1, 1]$ is a smooth odd sigmoidal function such that $S(0)=0$, $\partial S/\partial x \ge 0$, $\forall x$ and $\partial S(0)/\partial x = 1$. The parameter $d$ is chosen so that $-d+1>0$. In this way, the intrinsic dynamics has two stable equilibra, say $\bar x>0$ and $-\bar x$, corresponding to the two mutually excluding opinions and one unstable equilibrium in $x_i=0$ (this corresponds to a neutral opinion). The decision process for the $i$-th node/agent is described by the time scale dynamics
\begin{equation}\label{eqn:network}
x_i^\Delta = -dx_i + S(x_i) +\sigma \sum_{j=1}^Nl_{ij}\left(x_j - x_i\right) + u_i(t),
\end{equation}
where $\sigma>0$ and where $u_i(t)$ models the effects of {\em stubborn} agents on the $i$-th node. Stubborn agents (see e.g. \cite{TIAN2018213,GHADERI20143209} and references therein) do not update their opinion based on the other agents in the network and only communicate their state to the nodes to which they are pinned. We consider the presence of one stubborn agent and its opinion at time $t$, denoted by $x_r(t)$, is the solution to the dynamics $x_r^\Delta = -dx_r+S(x_r)$, $x_{r,0}=x_r(0)$ and the term $u_i(t)$ in (\ref{eqn:network}) takes the form $u_i(t) = p_i\sigma (x_r(t)-x_i(t))$. 
In what follows, we consider the so-called non-homogeneous time scale introduced in \cite{TAOUSSER201424}, $\mathbb{P}_{\{t_{\sigma_k},t_k\}}$. This time scale models the fact that communication between the nodes can be intermittent, starting at non-homogeneous time instants with an heterogeneous duration. In order to introduce the time scale we let $\{t_0,t_1,t_2,t_3,\ldots\}$ be a monotonically increasing sequence of times without finite accumulation points. Then, \textcolor{black}{$\mathbb{P}_{\{t_{\sigma_k},t_{k+1}\}}:=\bigcup_{k=0}^{+\infty}\left[t_{\sigma_k},t_{k+1}\right]$}, where $t_{\sigma_0} = t_0 = 0$, $t_k < t_{\sigma_k} < t_{k+1}$, $\forall k$. The definition of the time scale implies that $0\le\mu(t)\le\mu_{\max}<+\infty$.
\begin{figure}[t!]
\begin{center}
\centering
\psfrag{x}[c]{{$t$}}
\psfrag{y}[c]{{$x_i(t)$}}
\includegraphics[width=0.8\linewidth]{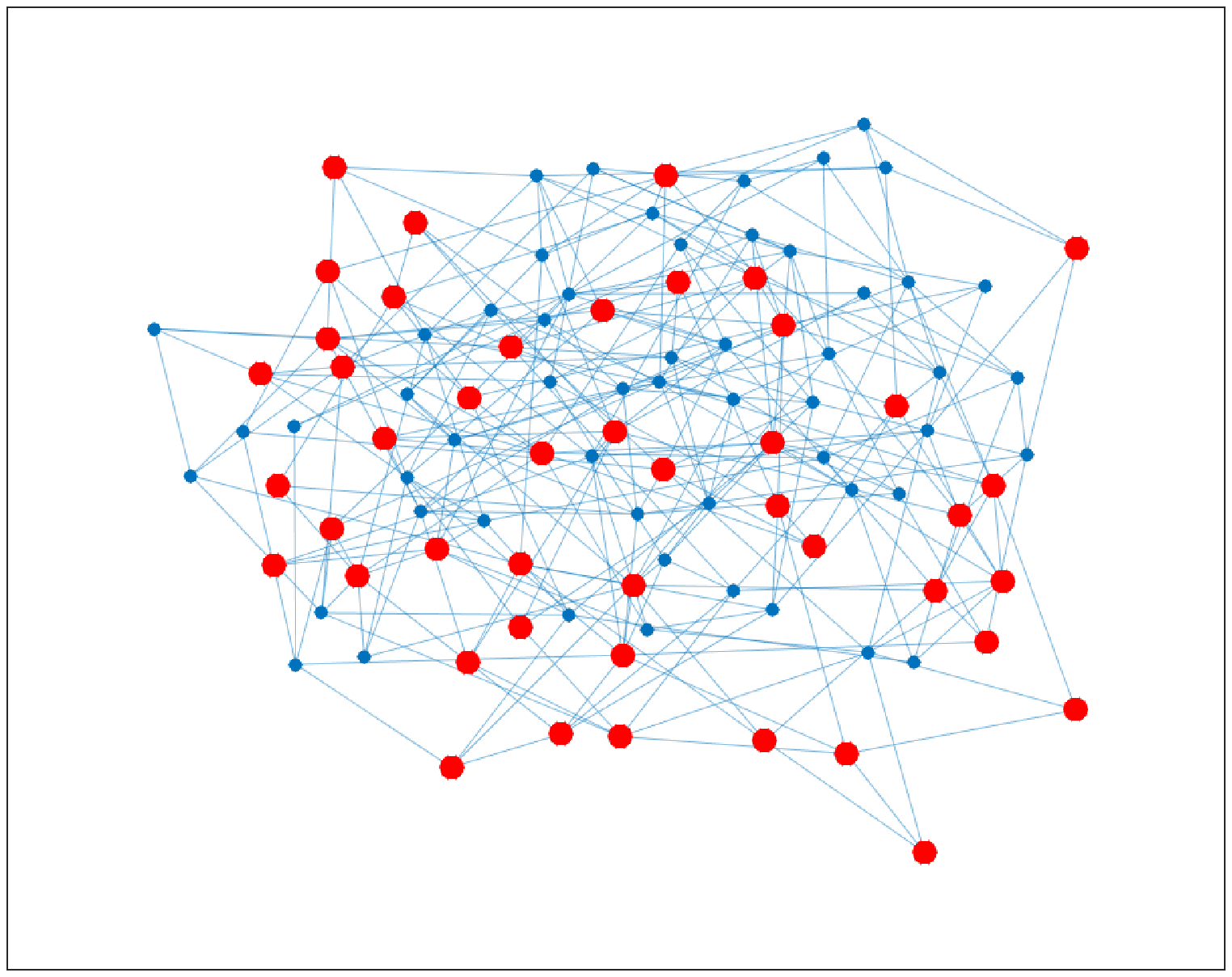}
\includegraphics[width=0.8\linewidth]{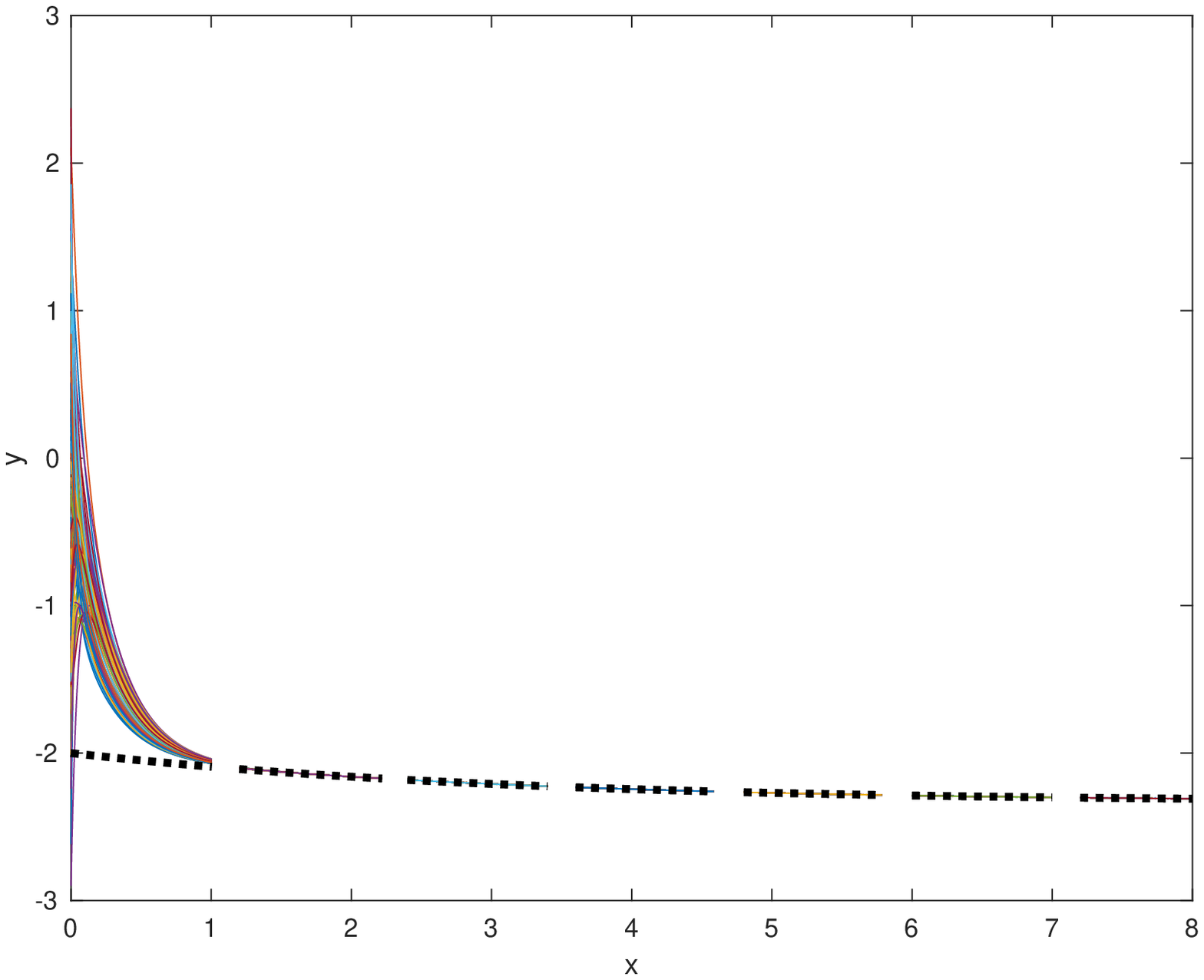}
  \caption{Top panel: graph of the small world network considered in Section \ref{sec:opinion}. The number of nodes is $100$ and the nodes pinned by the stubborn agent are highlighted in red in the figure (colors online). In total, $45$ nodes were pinned. The network was built following the Watts-Strogatz model \cite{Wat_Str_98} and by setting the mean node degree to $2$ and the rewiring probability to $0.7$.  Bottom panel: time evolution for the network (the time evolution for $x_r(t)$ is highlighted with a dashed black line). {The code for the simulations is available at: \burl{https://github.com/GIOVRUSSO/Control-Group-Code}}}
  \label{fig:opinion_formation}
  \end{center}
\end{figure}

We now make use of Theorem \ref{thm:nonlinear_pinning} to study whether the network achieves time scale synchronization onto $x_r(t)$. That is, we study whether the stubborn nodes can drive the opinion of all the nodes towards their own opinion \cite{PhysRevLett.91.028701}. To this aim, we compute $m_2(f_x(t,x),2\mu) := m_2(-d+\partial S(x)/\partial x,2\mu)$ in two cases: (i) when $\mu(t)\ne 0$ and; (ii) when $\mu(t)=0$. In the latter case, we get $m_2(f_x(t,x),2\mu) \le -d+\bar S$, where $\bar{S}:=\sup_x{\partial S(x)/\partial x}$. Instead, in the former case we obtain:
\begin{equation*}
\begin{split}
 m_2(-d+\partial S(x)/\partial x,2\mu) & := \frac{1}{2\mu(t)}\left(\abs{1+2\mu(t)\left(-d+\frac{\partial S(x)}{\partial x}\right)}-1\right) \\
 & \le \left\{ \begin{array}{*{20}c}
 -d +\bar{S}, & \text{if } 1+2\mu\left(-d+\frac{\partial S(x)}{\partial x}\right)> 0,\\
 -\frac{1}{\mu_{\max}}+d, & \text{otherwise}.
 \end{array}\right.
 \end{split}
\end{equation*}
That is, $m_2(f_x(t,x),2\mu) \le \max\left\{-d+\bar{S},-1/\mu_{\max}+d\right\}$, $\forall t$. Now, since in this case the coupling matrix is $\Gamma = 1$, by means of Theorem \ref{thm:nonlinear_pinning} we can conclude that the opinions of the nodes converge to the opinion of the stubborn agent if
\begin{equation}\label{eqn:cond_pinning_example}
\max\left\{-d+\bar{S},-1/\mu_{\max}+d\right\} + \max_im_2(-\tilde{\lambda}_i,2\mu)\le - \bar{c}^2,
\end{equation}
for some $\bar c \ne 0$. In order to validate our prediction, consider the small world network \cite{Wat_Str_98} of Figure \ref{fig:opinion_formation} (top panel). The time scale over which the dynamics evolves is $\mathbb{P}_{\{t_{\sigma_k},t_k\}}$ with $\mu_{\max} = 0.25$. Also, in the simulations we set $S(x) = atan(x)$, $d = 0.5$, $\sigma = 5$ and $\sigma_r = 10$. For this set of parameters, after computing the eigenvalues $\tilde\lambda_i$'s of the resulting matrix $\tilde L$ corresponding to the graph in Figure \ref{fig:opinion_formation}, we verified that (\ref{eqn:cond_pinning_example}) was satisfied. That is, in accordance with Theorem \ref{thm:nonlinear_pinning}, the nodes will all achieve synchronization onto $x_r(t)$. This is also confirmed by the bottom panel of Figure \ref{fig:opinion_formation}, which clearly shows how nodes converge towards $x_r(t)$, i.e. they achieve the same opinion of the stubborn agent.

\section{Conclusion}\label{sec:conclusions}
We presented a number of novel sufficient conditions for the stability of linear and nonlinear dynamical systems on time scales. The conditions leverage the notion of matrix measure on time scales, which was also  characterized in this work. The results, which are based on the use of matrix measures \textcolor{black}{and give an extension of contraction theory to nonlinear dynamics on time scales,} were formally linked to the existence of Lyapunov functions and were used to study epidemic dynamics and complex networks. In particular, we first gave a sufficient condition on the parameters of the time scale SIQR model ensuring that its solutions converge to the disease-free solution. Then, we gave a sufficient condition for pinning synchronizability of complex time scale networks and made use of this condition to study collective opinion dynamics with stubborn agents. The results were complemented with simulations.  \textcolor{black}{Our ongoing research includes, building on the results presented here, extending our recent works \cite{8796304,7937859,9353260} to study: (i) large-scale platoon systems of autonomous and automated vehicles. These systems are characterized by the fact that continuous-time dynamics overlap with discrete-time dynamics; (ii) the loss of a {\em scalability} property in time-scale networks with delays. }

\section*{Acknowledgments} GR would like to thank Prof. Mario di Bernardo at University of Naples for the insightful discussions on the basic reproduction number of the epidemic model studied in Section \ref{sec:epidemic_model}. \textcolor{black}{The authors would like to thank the anonymous reviewers for their constructive comments.}

\bibliographystyle{AIMS}
\providecommand{\href}[2]{#2}
\providecommand{\arxiv}[1]{\href{http://arxiv.org/abs/#1}{arXiv:#1}}
\providecommand{\url}[1]{\texttt{#1}}
\providecommand{\urlprefix}{URL }

\end{document}